\def\inte#1{
	\displaystyle\mathop{#1\kern0pt}^\circ }
\let\pa=\partial
\let\al=\alpha
\let\b=\beta
\let\d=\delta
\let\f=\frac
\let\vf=\varphi
\def\pa{\partial}
\def\grad{\nabla}
\def\virgp{\raise 2pt\hbox{,}}
\def\cdotpv{\raise 2pt\hbox{;}}
\def\C{\mathop{\mathbb C\kern 0pt}\nolimits}
\def\DD{\mathop{\mathbb D\kern 0pt}\nolimits}
\def\EE{\mathop{{\mathbb E \kern 0pt}}\nolimits}
\def\K{\mathop{\mathbb K\kern 0pt}\nolimits}
\def\N{\mathop{\mathbb N\kern 0pt}\nolimits}
\def\Q{\mathop{\mathbb Q\kern 0pt}\nolimits}
\def\R{\mathop{\mathbb R\kern 0pt}\nolimits}
\def\SS{\mathop{\mathbb S\kern 0pt}\nolimits}
\def\ZZ{\mathop{\mathbb Z\kern 0pt}\nolimits}
\def\TT{\mathop{\mathbb T\kern 0pt}\nolimits}
\def\P{\mathop{\mathbb P\kern 0pt}\nolimits}
\def\th{\theta}
\newcommand{\beq}{\begin{equation}}
	\newcommand{\eeq}{\end{equation}}
\newcommand{\ben}{\begin{eqnarray}}
	\newcommand{\een}{\end{eqnarray}}
\newcommand{\beno}{\begin{eqnarray*}}
	\newcommand{\eeno}{\end{eqnarray*}}
\newcommand{\Rmnum}[1]{\expandafter\@slowromancap\romannumeral #1@}
\newtheorem{thm}{Theorem}[section]
\newtheorem{lem}{Lemma}[section]
\theoremstyle{definition}
\newtheorem{rmk}{Remark}[section]
\numberwithin{equation}{section}
\title[Optimal convergence estimates for Boltzmann equation]{Optimal convergence estimate of the limit from inverse power potential to hard sphere Boltzmann equation}
\author[Z.-N. Hu]{Zheng-nan Hu}
	\address[Z.-N. Hu]{School of Mathematics, Sun Yat-Sen University, Guangzhou, 510275, P. R.  China.}
	\email{huzhn3@mail2.sysu.edu.cn}
\author[J.-W. Jang]{Jin woo Jang}
\address[J.-W. Jang]{Department of Mathematics, POSTECH (Pohang University of Science and Technology), Pohang
37673, South Korea.}
\email{jangjw@postech.ac.kr}
    \author[Z.-A. Yao]{Zheng-an Yao}
	\address[Z.-A. Yao]{School of Mathematics, Sun Yat-Sen University, Guangzhou, 510275, P. R.  China.} \email{mcsyao@mail.sysu.edu.cn}
    \author[Y.-L. Zhou]{Yu-long Zhou}
	\address[Y.-L. Zhou]{School of Mathematics, Sun Yat-Sen University, Guangzhou, 510275, P. R.  China.} \email{zhouyulong@mail.sysu.edu.cn}
\begin{document}
 
	\begin{abstract} 

      The inverse power potential $U(r)=r^{-1/s}, 0<s<1$,  generates the Boltzmann kernel 
$B^{s}=|v-v_*|^{1-4s} b_s(\theta)$ with an angular singularity as $\theta\to 0$.  
Jang et al.~\cite{Jang2023-df} proved the limit $B^{s}\to \frac14|v-v_*|$ as $s\to 0$, as well as weak convergence of solutions based on this kernel convergence.  
In this work we establish the following sharp quantitative estimate:
\[
|b_s(\theta)-\tfrac14| \le C\, s\,\theta^{-2-2s}.
\]
In particular, this sharp estimate yields the \emph{optimal} $O(s)$ convergence rate for solutions of the homogeneous Boltzmann equation with large initial data in suitable Sobolev spaces; i.e., for any $t\in[0,T]$, we have 
$$f^s(t)=f^0(t)+O(s),$$ quantified by the $L^1_k$ norm for $k\ge 2.$

\end{abstract}

\maketitle
\section{Introduction}
\subsection{The Boltzmann equation and notations.} In the present paper, we study the spatially homogeneous Boltzmann equations with the exact collision kernels derived from the inverse power potentials $U(r)=\frac{1}{r^{1/s}}, 0<s<1$ and the hard sphere model. The homogeneous Boltzmann equations read
\begin{equation}\label{inverse-pl-s}
\begin{cases}
\pa_t f^s=Q^s(f^s,f^s)\\
f^s(0,v)=f_{\textup{in}}(v),
\end{cases}  
\end{equation}
and
\begin{equation}\label{hard-sphere}
\begin{cases}
\pa_t f^0=Q^0(f^0,f^0)\\
f^0(0,v)=f_{\textup{in}}(v).
\end{cases}
\end{equation}
Here, $f^s(t,v),f^0(t,v)\ge 0$ are the density functions of collision particles which at time $t\ge 0$ moving with velocity $v\in\mathbb{R}^3$. The Boltzmann collision operator $Q^s$ and $Q^0$ are associated to the kernel $B^s(v-v_*,\sigma), B^0(v-v_*,\sigma)$ respectively, which act only on velocity variable $v$, defined by
$$Q^s(g,f)=\int_{\mathbb{R}^3\times \mathbb{S}^2}B^s(v-v_*,\sigma)(g_*'f'-g_*f)\mathrm{d}\sigma\mathrm{d}v_*,$$
and
$$Q^0(g,f)=\int_{\mathbb{R}^3\times \mathbb{S}^2}B^0(v-v_*,\sigma)(g_*'f'-g_*f)\mathrm{d}\sigma\mathrm{d}v_*.$$
Here we use the standard shorthand $f=f(v), g_*=g\left(v_*\right), f^{\prime}=f\left(v^{\prime}\right), g_*^{\prime}=g\left(v_*^{\prime}\right)$, where $v',v'_*$ are given by 
$$v^{\prime}=\frac{v+v_*}{2}+\frac{\left|v-v_*\right|}{2} \sigma, \quad v_*^{\prime}=\frac{v+v_*}{2}-\frac{\left|v-v_*\right|}{2} \sigma, \quad \sigma \in \mathbb{S}^{2} .$$


In the hard-sphere model, the collision kernel $B^{0}$ depends only on the relative velocity and is given by (the radius of particle is $\frac{1}{2}$)
$$B^{0}(v-v_*,\sigma)=\frac{1}{4}|v-v_*|.$$
For the inverse power law $U(r)=\frac{1}{r^{1/s}}, 0<s<1$, 
explicit derivation of the Boltzmann kernel $B^{s}$ was given in \cite{Jang2023-df}. Here we recall the result in \cite{Jang2023-df} and introduce adequate notations therein for our purpose. For any $0<s<1$, 
the Boltzmann kernel  $B^{s}$  reads
\ben\label{1.3}
B^{s}(v-v_*,\sigma)=|v-v_*|^{\gamma}b_{s}(\theta)= |v-v_*|^{1-4s} b_{s}(\theta), \quad v,v_* \in \R^3, \theta \in [0,\pi],
\een
where the 
 angular part is given by:
\ben\label{b_s(theta)}
b_s(\th)=\frac{2^{4s}}{2}\frac{1}{\sin\th}\beta_s(y_s(\varphi))\beta_s'(y_s(\varphi))y_s'(\varphi).
\een
Here $v-v_*$ is the relative velocity and $\theta$ is the deviation angle: $\cos\theta = \frac{v-v_*}{|v-v_*|} \cdot \sigma$. In \eqref{b_s(theta)}, the two variables $\varphi$ and $\th$ are related by $2\varphi+\th=\pi$.   
The formula \eqref{b_s(theta)} involves two $s$-dependent functions $\beta=\beta_s(y), y=y_s(\varphi)$. The function $\beta_{s}$ is given by
\ben \label{function-beta-s-def}
\beta = \beta_{s}(y) \colonequals \frac{y}{(1-y^2)^{s}}, \quad  y \in [0,1].
\een
By direct computation, 
\ben \label{function-beta-s-derivative-def}
\beta_{s}^{\prime}(y) = 2s\frac{1}{(1-y^2)^{1+s}} + (1-2s)\frac{1}{(1-y^2)^{s}}
.
\een
The function $y_s$ is the inverse function of  $\vf_s$ defined by 
\ben \label{varphi-given-by-integral}
\varphi  =\varphi_{s}(y)  \colonequals y\int_{0}^{1}\frac{1}{\sqrt{g(s,y,z)}}\mathrm{d}z,\quad y \in[0,1].
\een 
Here, for the sake of simplicity, we define
\ben\label{def-of-g}
g(s,y,z)\colonequals 1-z^{1/s}-y^{2}(z^2-z^{1/s}), \quad y,z \in[0,1], s \in (0,1).
\een

As $s \to 0$, the inverse-power potential converges to the hard-core (hard-sphere) potential in the sense that 
$U_s(r)=r^{-1/s} \to U_{0} \colonequals \infty\,\mathbf{1}_{0<r<1}$.
Consequently, the convergence of the singular Boltzmann kernel $B^{s}$ to the hard-sphere kernel 
$\frac{1}{4}|v-v_*|$ as $s \to 0$ has long been expected in the kinetic theory literature.  
As already anticipated in \cite[Remark 1.0.1]{GST2013}, such a limit was suggested to exist in their discussion of the hard-sphere and short-range potential regimes, and it was rigorously established for the first time in the work of Jang--Kepka--Nota--Vel{\'a}zquez~\cite{Jang2023-df}. 
The authors of \cite[Theorem 1(i)]{Jang2023-df} proved that the following limit
\ben \label{convergence-of-b-s-to-constant}
\lim_{s \to 0 } b(s,\theta) = \f{1}{4},
\een
holds for any $0<\theta \leq  \pi$.
However, \eqref{convergence-of-b-s-to-constant} is stated without an explicit
quantitative convergence rate uniform in \(\theta\). 
 In this work, we provide an optimal convergence rate for this limit. Furthermore, the convergence result $f^s\to f^0$, $s\to0$ (in some suitable norms) will be proved.
\subsection{Main results}
Our first main result provides a sharp and fully explicit quantitative version of the formal limit $b_s(\theta)\to\frac14$ as $s\to0$. 
Because the kernel is expressed through several implicitly defined functions whose $s$–dependence becomes singular near $\varphi=\pi/2$, obtaining an optimal estimate requires controlling all such compositions uniformly in $s$. 
The following theorem establishes the optimal convergence rate $O(s)$.

\begin{thm}\label{kernel-convergence}  For any  $0<\theta \leq  \pi, 0<s<1$, it holds that
	\ben\label{convergence_rate_in_global}
	\left|b_s(\th)-\frac{1}{4}\right|\leq 50000s\th^{-2-2 s}.
	\een
\end{thm}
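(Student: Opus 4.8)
\medskip\noindent\emph{Strategy.} The plan is to replace the product formula \eqref{b_s(theta)} by a single scalar identity whose $s\to0$ limit is identically zero, and then to control that scalar by isolating an explicit algebraic part from a part carrying the scattering integral $\varphi_s$. \emph{Reformulation.} First I would rewrite \eqref{b_s(theta)} using $\beta_s\beta_s'=\tfrac12(\beta_s^2)'$, $\sin(2\varphi)=-\tfrac12\tfrac{d}{d\varphi}\cos(2\varphi)$, and the change of variable $y=y_s(\varphi)$ (so $\theta=\pi-2\varphi_s(y)$, $\sin\theta=\sin(2\varphi_s(y))$, $y_s'(\varphi)=1/\varphi_s'(y)$), obtaining
$$4b_s(\theta)-1=\frac{G_s'(y)}{\sin\!\big(2\varphi_s(y)\big)\,\varphi_s'(y)},\qquad G_s(y):=2^{4s}\beta_s(y)^2+\tfrac12\cos\!\big(2\varphi_s(y)\big).$$
Since $\varphi_0=\arcsin$ and $\beta_0(y)=y$ give $G_0(y)=y^2+\tfrac12(1-2y^2)\equiv\tfrac12$, one has $G_s'=\tfrac{d}{dy}(G_s-G_0)$, and with $q:=1-y^2$ and $(\beta_s^2)'(y)=2y(q+2sy^2)q^{-2s-1}$,
$$G_s'(y)=\underbrace{2y\Big[\big(\tfrac4q\big)^{2s}-1+\frac{2^{4s+1}s\,y^2}{q^{2s+1}}\Big]}_{(\mathrm P1)}\;+\;\underbrace{\big(2y-\sin(2\varphi_s(y))\,\varphi_s'(y)\big)}_{(\mathrm P2)} .$$
So I would need to bound $(\mathrm P1)$ and $(\mathrm P2)$ from above and $\sin(2\varphi_s(y))\varphi_s'(y)$ from below.

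\emph{The algebraic part and the key lemma.} The term $(\mathrm P1)$ is elementary: $(4/q)^{2s}-1=e^{2s\ln(4/q)}-1\le 2s\ln(4/q)(4/q)^{2s}$ together with $\ln(4/q)\lesssim q^{-1}$ on $(0,1]$ give $|(\mathrm P1)|\le Cs\,q^{-2s-1}$. For $(\mathrm P2)$ and the denominator I would establish the \emph{key lemma}: the $s$-uniform estimates
$$0\le\varphi_s(y)-\arcsin y\le\frac{Cs}{\sqrt{q+s}},\qquad q^{-1/2}\le\varphi_s'(y)\le q^{-1/2}+\frac{Cs}{(q+s)^{3/2}}\qquad(q=1-y^2).$$
Granting them, writing $\delta:=\varphi_s(y)-\arcsin y$ and expanding $\sin(2\varphi_s(y))=\sin(2\arcsin y)\cos2\delta+\cos(2\arcsin y)\sin2\delta$, one gets $|(\mathrm P2)|\lesssim s\big(q^{-1/2}(q+s)^{-1/2}+(q+s)^{-1}\big)$ and $\sin(2\varphi_s(y))\varphi_s'(y)\gtrsim y(q+s)/q$ — the factor $y$ being harmless except near $\theta=\pi$, where both $G_s'(y)$ and the denominator vanish like $y$ with ratio $O(s)$.

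\emph{Proof of the key lemma.} Here I would use that $g=g(s,y,z)=(1-y^2z^2)-(1-y^2)z^{1/s}\le 1-y^2z^2$, which yields the exact identity
$$\varphi_s(y)-\arcsin y=y\int_0^1\frac{(1-y^2)\,z^{1/s}}{\sqrt g\,\sqrt{1-y^2z^2}\,\big(\sqrt g+\sqrt{1-y^2z^2}\big)}\,dz\ \ge 0 .$$
On $z\in[0,\tfrac12]$ the weight $z^{1/s}\le 2^{-1/s}$ makes the contribution negligible; on $z\in[\tfrac12,1]$, $g$ has a simple zero at $z=1$, and from $z^{1/s}\le e^{-(1-z)/s}$ and $1-e^{-t}\ge t/(1+t)$ one gets the \emph{sharp} lower bound $g\ge c\,(1-z)\big((1-z)+q+s\big)/\big((1-z)+s\big)$ near $z=1$. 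Putting $\epsilon=1-z$ and using $\int_0^\infty\epsilon^{-1/2}(\epsilon+q)^{-1}d\epsilon=\pi q^{-1/2}$ together with $\int_0^\infty\epsilon^{-1/2}e^{-\epsilon/s}d\epsilon=\sqrt{\pi s}$ would give the first estimate; for $\varphi_s'(y)=\int_0^1 g^{-1/2}dz+y^2\int_0^1(z^2-z^{1/s})g^{-3/2}dz$ the lower bound $\varphi_s'\ge\varphi_0'=q^{-1/2}$ follows from $g\le 1-y^2z^2$ and the sign of the terms (for $s\le\tfrac12$; $s>\tfrac12$ is analogous), and the upper bound from differentiating the displayed identity in $y$, the extra singular term $\partial_y g$ being controlled by the same lower bound on $g$.

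\emph{Assembly and main obstacle.} Finally, since $\varphi_s(y)\ge\arcsin y$ one has $y\le\cos(\theta/2)$, hence $q\ge\sin^2(\theta/2)\gtrsim\theta^2$; with $\varphi_s(y)\le\arcsin y+C\sqrt s$ one also gets $q\lesssim\theta^2+s$, so $q\asymp\theta^2$ for $\theta\ge\sqrt s$ and $\theta^2\lesssim q\lesssim s$ for $\theta\le\sqrt s$, while $\sin\theta\asymp\theta$ on $(0,\tfrac\pi2]$ (the range $[\tfrac\pi2,\pi]$ handled directly). Inserting the bounds above into the identity of the first step and splitting into $\theta>\sqrt s$ (where $\theta^{-2s}\le s^{-s}\le e^{1/e}$) and $\theta\le\sqrt s$ (where $s\,\theta^{-2-2s}\ge s^{-s}\ge 1$ and $\theta^{2-2s}\le s^{1-s}\le e^{1/e}s$), and in both retaining the factor $s$ from $(4/q)^{2s}-1$ via $e^x-1\le xe^x$, every term collapses to $Cs\,\theta^{-2-2s}$. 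I expect the hard part to be the key lemma: the integrand above has an inverse-square-root singularity at $z=1$ whose coefficient degenerates like $1/s$, so the crude bound $g\ge 1-z$ loses the crucial factor $s$ and one is forced to the boundary-layer lower bound on $g$; it is precisely the exponent $-\tfrac32$ in the bound for $\varphi_s'$ (rather than $-2$) that produces the \emph{sharp} power $\theta^{-2-2s}$. The remaining difficulty is the routine but delicate bookkeeping of the absolute constants needed to land below $50000$, which is nontrivial only for an intermediate range of $\theta$ where neither $s$ is small nor $\theta^{-2-2s}$ is large.
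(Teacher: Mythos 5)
Your route is genuinely different from the paper's (which compares each factor $2^{4s}$, $y_s'(\varphi)$, $\beta_s\beta_s'$ separately with its limit), and the reformulation $4b_s(\theta)-1=G_s'(y)/\bigl(\sin(2\varphi_s(y))\,\varphi_s'(y)\bigr)$ with $G_0\equiv\tfrac12$ is a nice idea. But the proof as written has a genuine gap in the key lemma. The claimed lower bound $\varphi_s'(y)\ge q^{-1/2}=(1-y^2)^{-1/2}$ is false once $q\lesssim s$: for $s<1/2$ the function $\varphi_s'$ is increasing in $y$ with $\varphi_s'(1)=\tfrac1sW_{1/s}\le\sqrt{\pi/2}\,s^{-1/2}$ finite, so $\varphi_s'(y)<q^{-1/2}$ for all $y$ with $1-y^2<\tfrac{2s}{\pi}$. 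The justification offered ("$g\le 1-y^2z^2$ and the sign of the terms") cannot work, because the numerator $1-z^{1/s}\le 1$ cuts against the comparison of denominators, and near $z=1$ the numerator loss wins. The correct lower bound in that regime is $\varphi_s'(y)\gtrsim(q+s)^{-1/2}$, and proving it is precisely the hard step of the argument (it is the content of the paper's Lemma~\ref{key-estimate}, whose proof splits into $1-y^2\le s$ and $1-y^2\ge s$ and uses a nontrivial monotonicity argument); your boundary-layer lower bound on $g$ is plausibly strong enough to deliver it, but you have not done so, and the statement you recorded is simply wrong.

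This error propagates into the denominator estimate, which moreover appears to be inverted. In the regime $q\ll s$ one has $q\asymp\theta\sqrt s$, $\varphi_s'(y)\asymp s^{-1/2}$ and $\sin\theta\asymp\theta\asymp q/\sqrt s$, so
\begin{equation*}
\sin\bigl(2\varphi_s(y)\bigr)\,\varphi_s'(y)\;\asymp\;\frac{q}{s}\;=\;O\!\Bigl(\frac{q}{q+s}\Bigr),
\end{equation*}
whereas you claim the lower bound $\gtrsim y\,(q+s)/q$, which tends to infinity there; the true quantity and your claimed bound are reciprocals of each other. This is exactly the regime $\theta\lesssim\sqrt s$ in which the sharp singularity $\theta^{-2-2s}$ is produced, so the assembly step cannot be checked as written. (With the corrected bounds $\varphi_s'\gtrsim(q+s)^{-1/2}$ and denominator $\gtrsim yq/(q+s)$ the bookkeeping does appear to close, so the framework is repairable — but the repair is the substance of the proof, not a routine fix.) Finally, note that the theorem asserts the explicit constant $50000$; you defer all constants, which is acceptable for the scheme but means the statement as posed is not actually established.
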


Our proof of Theorem \ref{kernel-convergence} is purely explicit and constructive so that we have the constant $50000$.
Comparison between Theorem \ref{kernel-convergence} and the results in  \cite{Jang2023-df} is given in the following remark.
\begin{rmk}
[Optimality]\label{optimality}
We briefly explain why the estimate in Theorem~\ref{kernel-convergence} is optimal
both in the parameter $s$ and in the angular singularity.
Recalling from \cite[Theorem~1(ii)]{Jang2023-df} that for any fixed $0<s<1$,
\begin{equation}\label{convergence-of-b-s-theta-to-0}
    \lim_{\theta \to 0} \theta^{2+2s} b(s,\theta)
    = C_s 
    = 2^{4s}s \, \bigl(\varphi_s'(1)\bigr)^{2s}
    = 2^{4s}s \left(\frac{1}{s} W_{1/s}\right)^{2s},
\end{equation}
where $W_n=\int_0^{\pi/2} \sin^{n} t \mathrm{d}t$ denotes the classical Wallis integral.
Using the well-known asymptotic behavior of $W_n$ as $n\to\infty$, one verifies that
\begin{equation}\label{C-s}
    \lim_{s\to0} \frac{C_s}{s} = 1.
\end{equation}
Combining \eqref{convergence-of-b-s-theta-to-0} and \eqref{C-s}, we see that for sufficiently
small $s$ and $\theta$, the kernel satisfies
\[
    b_s(\theta)\sim s\,\theta^{-2-2s}.
\]
Thus the estimate \eqref{convergence_rate_in_global} in Theorem \ref{kernel-convergence}
achieves the optimal order in both variables $s$ and $\theta$; no better decay in $s$
or in the power of $\theta$ can be expected in general.

We also note that the uniform upper bound
\begin{equation}\label{upper-bound-uniform-in-theta-s}
    \sup_{s\in(0,1/2]}\;\sup_{\theta\in(0,\pi]}\;
    \theta^{2+2s} b(s,\theta) < \infty
\end{equation}
was established in \cite[Theorem~1(iii)]{Jang2023-df}.  
Our new estimate \eqref{convergence_rate_in_global} implies \eqref{upper-bound-uniform-in-theta-s} directly.
\end{rmk}

The above convergence estimate is significant in several respects. First, it clarifies the relationship between two fundamental physical models—the hard-sphere model and the inverse power model—by providing a precise rate at which the latter approaches the former. Such a quantitative link is essential for understanding scattering phenomena, since both models arise naturally in the derivation of microscopic scattering laws, and the convergence rate offers new insight into how hard-sphere interactions effectively approximate long-range collisions in inverse power potentials. 

 Second, the estimate serves as a basic ingredient for the study of the Boltzmann equation. In particular, Theorem \ref{kernel-convergence} enables the derivation of a convergence rate for solutions $f^s\to f^0$ in suitable norms, thereby providing a rigorous proof of the hard-sphere approximation at the equation level. This is exactly the following Theorem \ref{thm1.2}.

Let us introduce adequate notations of function norms which we will use throughout the paper.
\beno 
|f|_{L^1_{q}}=\int_{\mathbb{R}^3}|f(v)|\langle v\rangle^q\mathrm{d}v,\quad |f|_{W^{1,1}_q}=\sum_{|\al|\le1}|\pa^{\al}f|_{L^1_q},
\\ H(f) = \int_{\mathbb{R}^3}f(v)\log f(v)\mathrm{d}v,\quad  |f|_{L\log L}=\int_{\mathbb{R}^3}|f\log f|\mathrm{d}v,
\eeno
where $\langle v\rangle = (1+|v|^2)^{1/2}$.

The optimal kernel estimate in Theorem~\ref{kernel-convergence} allows us to compare the full Boltzmann operators $Q_s$ and $Q_0$ in a quantitative manner. 
To measure convergence at the level of solutions, we introduce the scaled difference
\[
F^s(t,v)=\frac{f_s(t,v)-f_0(t,v)}{s},
\]
which isolates the first-order deviation of $f_s$ from the hard-sphere solution $f_0$. 
Our second main result below shows that $F^s$ remains uniformly bounded in weighted $L^1_k$ norms for any $k\ge2$, yielding the optimal rate
\[
f_s(t)=f_0(t)+O(s)
\quad\text{in }L^1_k,\qquad t\in[0,T].
\]
This establishes the optimal convergence of solutions in the hard-sphere limit.

\begin{thm}\label{thm1.2} Let $2\le k\in\mathbb{R}$, $0<\delta\in\mathbb{R}$, suppose $0 \leq f_{\textup{in}} \in  L^1_{k+3+\delta} \cap W^{1,1}_{k+2+\delta} \cap L\log L$. For $0<s<\frac{1}{8}$, let $F^s=\frac{f^s-f^0}{s}$ where $f^s$ and $f^0$ are solutions to \eqref{inverse-pl-s} and \eqref{hard-sphere} respectively. Then for any $T>0$, there exists a constant $C$ depending on $k,\d,|f_{\textup{in}}|_{L^1_{k+3+\delta}},|f_{\textup{in}}|_{W^{1,1}_{k+2+\delta}},|f_{\textup{in}}|_{L\log L},$ and $T$, independent of $s$, such that
    \ben\label{error-result}
\sup\limits_{t\in[0,T]}|F^s(t)|_{L^1_k}\le C(k,\delta,|f_{\textup{in}}|_{L^1_{k+3+\delta}},|f_{\textup{in}}|_{W^{1,1}_{k+2+\delta}},|f_{\textup{in}}|_{L\log L},T).
    \een
    
\end{thm}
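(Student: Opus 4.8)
The plan is to derive an evolution equation for the scaled difference $F^s$ and run a weighted $L^1_k$ a priori estimate, closed by a Gronwall argument. Subtracting \eqref{hard-sphere} from \eqref{inverse-pl-s} and dividing by $s$, one gets
$$\partial_t F^s = \frac{1}{s}\bigl(Q^s(f^s,f^s)-Q^0(f^0,f^0)\bigr) = \underbrace{\frac{1}{s}\bigl(Q^s-Q^0\bigr)(f^s,f^s)}_{\text{kernel-error term}} + \underbrace{Q^0(F^s,f^s)+Q^0(f^0,F^s)}_{\text{linearized hard-sphere part}}.$$
The first step is to control the kernel-error term using Theorem \ref{kernel-convergence}. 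Writing $Q^s-Q^0$ as an integral against the kernel difference $B^s-B^0 = |v-v_*|^{1-4s}b_s(\theta)-\frac14|v-v_*|$, I would split this into the contribution of $|b_s(\theta)-\tfrac14||v-v_*|^{1-4s}$ (bounded by $Cs\,\theta^{-2-2s}|v-v_*|^{1-4s}$ by \eqref{convergence_rate_in_global}) and the contribution of $\frac14\bigl(|v-v_*|^{1-4s}-|v-v_*|\bigr)$ (bounded by $\frac14 s\,|v-v_*|\,|\log|v-v_*||\cdot 4 + \dots$, i.e.\ $O(s)$ with an extra logarithmic velocity weight after using $|a^{-4s}-1|\le 4s|\log a|\,a^{\pm 4s}$). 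The $\theta^{-2-2s}$ singularity is integrable on $\mathbb{S}^2$ against the measure $\sin\theta\,d\theta$ only after we use the standard cancellation $f'_*f'-f_*f$ near $\theta=0$; since $0<s<1/8$ we have $2+2s<9/4<3$, but $\theta^{-2-2s}\sin\theta\sim\theta^{-1-2s}$ is \emph{not} integrable, so one genuinely needs the pointwise bound $|f'_* f'-f_*f|\lesssim \theta\,(\text{stuff})$ coming from a first-order Taylor expansion in $\theta$, or alternatively the weak (Maxwellian-weighted) formulation with one derivative falling on a test function — this is where the hypothesis $f_{\textup{in}}\in W^{1,1}$ enters. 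After dividing by $s$, the net effect is that the kernel-error term is bounded in $L^1_k$ by $C\,|f^s|_{W^{1,1}_{k+2+\delta}}|f^s|_{L^1_{k+3+\delta}}$ uniformly in $s$, \emph{without} a factor of $s$ — exactly the gain that makes $F^s=O(1)$.

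The second step is to bound the linearized hard-sphere part. Here I would invoke the classical weighted $L^1$ estimates for the hard-sphere gain and loss operators: $|Q^0(g,f)|_{L^1_k}\le C\bigl(|g|_{L^1_{k+1}}|f|_{L^1_1}+|g|_{L^1_1}|f|_{L^1_{k+1}}\bigr)$, together with a Povzner-type inequality to absorb the top-order loss term (the loss part $-F^s_* f^s\langle v\rangle^k$ produces a coercive $-|F^s|_{L^1_{k+1}}$-type contribution with a good sign after using that $f^0$ has positive mass bounded below, while the gain part is controlled by interpolation and the Povzner gain in the weight). Thus
$$\frac{d}{dt}|F^s(t)|_{L^1_k}\le C\bigl(1+|f^s(t)|_{L^1_{k+1}}+|f^0(t)|_{L^1_{k+1}}\bigr)|F^s(t)|_{L^1_k} + C\,|f^s(t)|_{W^{1,1}_{k+2+\delta}}|f^s(t)|_{L^1_{k+3+\delta}},$$
modulo the Povzner absorption of the genuinely-top-order $|F^s|_{L^1_{k+1}}$ term (standard: split the sphere into $\theta$ near $0$ and $\theta$ away from $0$; on the near-$0$ part the weight gain is small, on the away part it is controlled by lower weights).

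The third step supplies the uniform-in-$s$ propagation of moments and $W^{1,1}$ regularity for $f^s$ and $f^0$ that feeds the right-hand side. I would cite (or reprove, following Mischler–Wennberg / Lu–Mouhot for hard potentials with cutoff, noting the Grad cutoff is uniform in $s$ by \eqref{upper-bound-uniform-in-theta-s}) that under the stated integrability and entropy hypotheses on $f_{\textup{in}}$, the solutions satisfy $\sup_{[0,T]}\bigl(|f^s(t)|_{L^1_{k+3+\delta}}+|f^s(t)|_{W^{1,1}_{k+2+\delta}}\bigr)\le C_T$ with $C_T$ independent of $s\in(0,1/8)$; the $L\log L$ hypothesis gives a uniform entropy bound which, via Povzner plus the entropy-dissipation, yields appearance and propagation of polynomial moments, and the $W^{1,1}$ propagation follows by differentiating the equation and running the same weighted $L^1$ estimate on $\partial^\alpha f^s$. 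With these bounds in hand the coefficient $C(1+|f^s|_{L^1_{k+1}}+|f^0|_{L^1_{k+1}})$ is in $L^\infty([0,T])$ and the forcing $C|f^s|_{W^{1,1}_{k+2+\delta}}|f^s|_{L^1_{k+3+\delta}}$ is bounded on $[0,T]$, so Gronwall's inequality on $|F^s(t)|_{L^1_k}$ (with $F^s(0)=0$) closes the estimate and yields \eqref{error-result}.

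The main obstacle is the first step: one must extract the $O(s)$ smallness from $Q^s-Q^0$ \emph{after} dividing by $s$, which requires combining Theorem \ref{kernel-convergence}'s pointwise bound $|b_s(\theta)-\tfrac14|\le Cs\theta^{-2-2s}$ with the angular cancellation of the collision operator in a way that keeps the resulting constant independent of $s$ — in particular checking that $\int_{\mathbb{S}^2}\theta^{-2-2s}\cdot(\text{first-order-in-}\theta\text{ cancellation})\,d\sigma$ stays bounded as $s\to 0$ (it does, since the integrand behaves like $\theta^{-2s}$ near $0$, uniformly integrable for $s<1/8$), and simultaneously handling the $|v-v_*|^{1-4s}$ versus $|v-v_*|$ discrepancy, whose logarithmic velocity loss is exactly why the hypotheses carry the extra $\delta$ in the weight. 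Everything else is a routine, if lengthy, adaptation of the standard weighted-$L^1$/Povzner machinery for spatially homogeneous Boltzmann with hard potentials and Grad cutoff.
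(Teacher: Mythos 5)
Your overall architecture coincides with the paper's: test the error equation against $\textup{sgn}(F^s)\langle v\rangle^k$, exploit that for $s<1/8$ the singular measure $b_s(\theta)\sin\theta\,d\theta\sim\theta^{-1-2s}d\theta$ becomes integrable after gaining a single power of $\theta$ (so a first-order Povzner/Taylor cancellation suffices and $k\ge 2$ is enough), split $|v-v_*|^{1-4s}-|v-v_*|$ logarithmically to explain the extra $\delta$ in the weights, absorb the top-order moment via the coercive loss term, and close with Gronwall. All of these ingredients appear in the paper in essentially the form you describe.

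The one substantive divergence is your decomposition of the error equation, and it creates a gap you have papered over. You write $\partial_t F^s=\tfrac1s(Q^s-Q^0)(f^s,f^s)+Q^0(F^s,f^s)+Q^0(f^0,F^s)$ and then bound the kernel-error term by $C\,|f^s|_{W^{1,1}_{k+2+\delta}}|f^s|_{L^1_{k+3+\delta}}$. This forces you to propagate $W^{1,1}_{k+2+\delta}$ regularity for $f^s$, i.e.\ for the solution with the \emph{non-cutoff} kernel, uniformly in $s$. You assert this "follows by differentiating the equation and running the same weighted $L^1$ estimate on $\partial^\alpha f^s$", but it does not follow by the same estimate: for the derivative equation $\partial_t\partial^\alpha f^s=Q^s(\partial^\alpha f^s,f^s)+Q^s(f^s,\partial^\alpha f^s)$, the term with $\partial^\alpha f^s$ in the first slot produces, after testing, a factor $\textup{sgn}(\partial^\alpha f^s)'\langle v'\rangle^k-\textup{sgn}(\partial^\alpha f^s)\langle v\rangle^k$ with no angular smallness, and since $\int_{\mathbb{S}^2}b_s\,d\sigma=\infty$ one must again invoke the cancellation lemma plus a Taylor/commutator argument (itself consuming one derivative of $f^s$) to extract the missing power of $\theta$. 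This is provable but is a genuine additional lemma, strictly harder than anything in the cutoff literature you cite. The paper avoids it entirely by the alternative splitting $\partial_tF^s=Q^s(f^s,F^s)+Q^s(F^s,f^0)+\tfrac1s(Q^s-Q^0)(f^0,f^0)$: the kernel difference then acts only on $(f^0,f^0)$, so the only $W^{1,1}$ regularity needed is for the hard-sphere solution $f^0$, whose derivative estimate is elementary because $B^0$ is bounded in $\theta$ (this is the paper's Lemma \ref{lemma3.3}). I recommend you either adopt that decomposition or supply the uniform-in-$s$ $W^{1,1}$ propagation for $f^s$ as a separate, fully proved lemma; as written, your third step is the weak link. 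A second, minor point: your crude bound $|Q^0(g,f)|_{L^1_k}\le C(|g|_{L^1_{k+1}}|f|_{L^1_1}+|g|_{L^1_1}|f|_{L^1_{k+1}})$ yields a top-order term $|F^s|_{L^1_{k+1}}$ with a constant proportional to $|f^s|_{L^1_2}$, which for large data cannot be absorbed by the coercive loss term; one must keep the sharp Povzner splitting $\langle v'\rangle^k\le\cos^k\tfrac\theta2\langle v\rangle^k+\sin^k\tfrac\theta2\langle v_*\rangle^k+(\text{cross terms})$ and put a small $\epsilon$ only on the cross terms, as you indicate parenthetically — make sure the final write-up actually does this rather than the crude bound.
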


\begin{rmk}[Optimality of the solution convergence]\label{optimality-solution}
The $O(s)$ convergence in Theorem~\ref{thm1.2} is also \textit{optimal} at the solution level.  
Indeed, Theorem~\ref{kernel-convergence} shows that the angular kernel admits a sharp
first-order expansion of the form
\[
b_s(\theta)=\frac14 + s\,\bar{b}(\theta) + o(s),
\qquad s\to0,
\]
with $\bar{b}(\theta)$ not identically zero.  
Since the collision operator $Q_s$ depends linearly on $b_s$, one obtains the exact expansion
\[
Q_s(f,f)=Q_0(f,f)+s\,\bar{Q}(f,f)+o(s),
\]
and thus $Q_s(f,f)-Q_0(f,f)$ cannot decay faster than order $s$ in any weighted $L^1_k$ norm.
Because the homogeneous Boltzmann flow is Fréchet differentiable with respect to the collision
kernel for general initial data, the corresponding solutions satisfy
\[
f_s(t)=f_0(t)+sF(t)+o(s),
\]
where the profile $F(t)$ solves a nontrivial linearized equation.
Hence no rate better than $O(s)$ can hold at the level of solutions.
Therefore the convergence rate in Theorem~\ref{thm1.2} is sharp.
\end{rmk}



Based on Theorem \ref{kernel-convergence},
we can also consider the convergence of solutions in other function spaces, for instance, in general Sobolev spaces. In this article, we refrain us to the weighted $L^1$ framework because it is closely related to the physical quantities. 
Indeed, the $L^1_2$ norm corresponds mass and energy of solutions to the Boltzmann equation, and thus provides the most relevant convergence of $f^s\to f^0$.
To the best of our knowledge, this type of result on \textit{optimal} convergence rate has not been previously available and may stimulate further developments, including potential extensions to the spatially inhomogeneous Boltzmann equation.

\subsection{Main difficulties}

The analysis of the limit $s\to0$ presents several difficulties that originate
from the implicit structure of the exact angular kernel $b_s(\theta)$ for inverse-power potentials and from a delicate interplay between geometry and singularity near the endpoint $y_s(\varphi)\to1$. 
Unlike the limit $s\to 1$ to the Coulomb potential studied in \cite{yang2025boltzmann}, where grazing collisions dominate, the hard-sphere limit $s\to0$ does not correspond to a concentration of small deflection angles. 
Instead, the difficulty arises from the degeneracy of the impact-parameter map $y_s(\varphi)$ near $\varphi=\pi/2$, together with the highly nonlinear $s$--dependence hidden in the implicit definition.

\subsubsection{Implicit structure and nested compositions.}
The angular kernel for inverse-power potentials can be written as
\[
b_s(\theta)
 = 2^{4s-1}\,\frac{1}{\sin\theta}\,
 \beta_s(y_s(\varphi))\,\beta_s'(y_s(\varphi))\,y_s'(\varphi),
\qquad
\beta_s(y)=\frac{y}{(1-y^2)^s},
\]
where $y_s(\varphi)$ is defined implicitly as the inverse function of
\[
\varphi_s(y)
 = y\int_0^1 \frac{dz}{\sqrt{g(s,y,z)}},
\qquad
g(s,y,z)=1 - z^{1/s} - y^2(z^2 - z^{1/s}),
\]
and $\theta$ and $\varphi$ satisfy $2\varphi+\theta=\pi$.
Thus $b_s(\theta)$ involves a fourfold composition of
\[
\beta_s,\ \beta_s',\ y_s=\varphi_s^{-1},\ y_s',
\quad\text{and the map }\theta\leftrightarrow\varphi.
\]
Since none of these are available in closed form for $s\neq0$, there is no simple direct comparison between $b_s(\theta)$ and its hard-sphere limit value $1/4$.  
The first main difficulty is to control these compositions uniformly in $s$ as $s\to0$.

\subsubsection{Endpoint degeneracy as $y_s(\varphi)\to1$.}
In the limit $s\to0$, one has $y_s(\varphi)\to y_0(\varphi)=\sin\varphi$.
Hence the singularity in the limit arises from the fact that
\[
y_s(\varphi)\to 1 \quad \text{as} \quad \varphi\to\frac{\pi}{2}.
\]
Controlling the quantities
\[
(1-y_s)^{-1}, \qquad (1-y_s^2)^{-1}, \qquad y_s'(\varphi)
\]
requires a detailed analysis of the degeneracy of function $g(s,y,z)$ near $z=1$ and $y=1$.  
We prove precise stability bounds,
\[
\bigl|\varphi_s(y)-\arcsin y\bigr|
 \lesssim s\,y(1-y^2)^{-1/2}, 
\qquad
\bigl|\varphi_s'(y)-(1-y^2)^{-1/2}\bigr|
 \lesssim s(1-y^2)^{-3/2},
\]
and then transfer these to $y_s(\varphi)$ and $y_s'(\varphi)$ via inverse-function estimates.
This step is technically delicate because the inversion amplifies small errors in $\varphi_s$ near the boundary $\varphi=\pi/2$.

\subsubsection{Nonlinear decomposition of the kernel error.}
To obtain the optimal bound
\[
\bigl|b_s(\theta)-\tfrac14\bigr|\lesssim s\,\theta^{-2-2s},
\]
we decompose (see \eqref{decomposition-into-three-terms}-\eqref{term-3} for details)
\[
I=\left|b_s(\theta)-\frac14 \right|\le  I_1 + I_2 + I_3,
\]
where each term isolates a structural contribution coming from:
\begin{itemize}
    \item[(i)] the prefactor  deviation $2^{4s}-1$,
    \item[(ii)] the deviation $y_s'(\varphi)-\cos\varphi$,
    \item[(iii)] the deviation 
    \[
    \beta_s(y_s(\varphi))\,\beta_s'(y_s(\varphi)) 
    \;-\;
    \beta_0(y_0(\varphi))\,\beta_0'(y_0(\varphi))
    = 
    \beta_s(y_s(\varphi))\,\beta_s'(y_s(\varphi)) - y_0(\varphi),
    \]
    since $\beta_0(y)=y$ and $\beta_0'(y)=1$.
\end{itemize}
All three contributions contain mixtures of the small parameter $s$ and the singular factor $(1-y_s)^{-1}$.
A major part of the analysis is to show that each term is bounded by $Cs\theta^{-2-2s}$, using the uniform-in-s estimates for $y_s$, $y_s'$, and the explicit structures of $\beta_s$ and $\beta_s'$.

\subsubsection{Passing from kernel convergence to solution convergence.}
To compare the solutions $f_s$ and $f_0$, we introduce the scaled difference $F^s(t,v)=\frac{f_s(t,v)-f_0(t,v)}{s}$.  
It is easy to check that $F^s$ satisfies the following equation.
\ben\label{error-equation}
\pa_t F^s=Q^s(f^s,F^s)+Q^s(F^s,f^0)+\frac{Q^s-Q^0}{s}(f^0,f^0).
\een
The equation \eqref{error-equation} for $F^s$ involves the difference of collision operators,
the linear terms containing $F^s$ and the Boltzmann operators $Q^s$ with the angular singularity  $ \theta^{-2-2s}$.  
Since the normalization by $1/s$ cancels the leading singularity, all remaining bounds must be uniform in $s$.
This requires a refined angular decomposition, Povzner-type moment inequalities, and uniform propagation of weighted moments for both $Q_s$ and $Q_0$.
\subsubsection{Large-data framework.}
We work with general (possibly large) initial data in weighted $L^1_k$ spaces preserving physical moments for some $k \ge 2$.  In this setting, obtaining nonlinear estimates that remain stable as $s \to 0$ requires delicate analysis.  Consequently, deriving the \textit{optimal} \(O(s)\) convergence rate becomes a genuinely challenging task.

\subsection{Outline of the paper}

The remainder of the article is devoted to the proofs of 
Theorems~\ref{kernel-convergence} and~\ref{thm1.2} in Section~\ref{sec 2} and~\ref{sec 3} respectively.

\section{Convergence rate estimate of the kernel}\label{sec 2}

In this section, we prove Theorem \ref{kernel-convergence} with full details.   The angular part
$b_s(\th)$ contains the implicit function $y_s$ which is the inverse function of $\vf_s$ given by the integral \eqref{varphi-given-by-integral} and \eqref{def-of-g}. We first give some basic facts about the two functions. The implicit relation between the impact parameter and the deviation angle is
classical in kinetic theory; see, for instance, Landau--Lifschitz~\cite{LandauLifschitz}
and the exposition of Cercignani~\cite{Cercignani1988}.  We follow the same geometric
setup and obtain quantitatively uniform-in-s estimates.

By direct computation,
\ben \label{derivative-varphi}
\vf'_s(y)=\int_{0}^{1}\frac{1-z^{1/s}}{(1- z^{1/s} - y^2(z^2-z^{1/s}))^{3/2}}\mathrm{d}z=\int_{0}^{1}\frac{1-z^{1/s}}{g^{3/2}}\mathrm{d}z>0.
\een
Therefore, $\vf_s$ is a strictly increasing function, and so $y_s$ is well-defined and also strictly increasing. It is easy to check that
\beno 
\varphi = \vf_s(y):  y \in [0,1] \to  \varphi \in [0,\frac{\pi}{2}], \quad 	
y = y_s(\varphi): \varphi \in [0,\frac{\pi}{2}] \to y \in [0,1] .
\eeno 
One can 
compare $\varphi_s(y)$ to its limit $\varphi_0(y) \colonequals \arcsin y$.  For $s\in(0,1], y \in (0,1)$, 
\ben \label{comparison-varphi}
\varphi_s\left(y\right) \colonequals   y \int_{0}^{1}  \frac{1}{\sqrt{1- z^{1/s} - y^2(z^2-z^{1/s})
}} \mathrm{d}z>y\int_{0}^{1}\frac{1}{\sqrt{1-y^{2}z^{2}}}\mathrm{d}z=\arcsin y \equalscolon \varphi_0(y).
\een
As a result,  for $s\in(0,1], y \in (0,1), \vf \in (0,\pi/2)$, 
\ben\label{relation of sin vf and y}
\varphi_s\left(y\right)>\arcsin y\iff \arcsin y_s(\vf)<\vf \iff y_s(\vf)<\sin\vf .
\een

%
%
%

The authors in \cite{Jang2023-df} proved that the limits 
\beno 
\vf_s(y)\to \arcsin y, \quad \vf'_s(y)\to \frac{1}{\sqrt{1-y^2}}, \quad s\to 0,
\eeno 
 hold locally uniform in $y\in [0,1)$. We further prove the limits with 
explicit convergence rates.
\begin{lem} \label{diff-varphi-and-limit}
  For any  $0 \leq y <  1, 0<s<1$, it holds that
    \ben \label{convergence_rate_of_vf_s_1}
    |\vf_s(y)-\arcsin y|\leq 2sy (1-y^2)^{-1/2},
    \\ \label{convergence_rate_of_derivative_vf_s}
    |\vf'_s(y)-\frac{1}{\sqrt{1-y^2}}|\leq 2s (1-y^2)^{-3/2}.
    \een
\end{lem}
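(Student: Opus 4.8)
The plan is to work directly with the integral representations and estimate the integrands pointwise in $z$. For the first bound \eqref{convergence_rate_of_vf_s_1}, write
\[
\varphi_s(y)-\arcsin y = y\int_0^1\!\left(\frac{1}{\sqrt{g(s,y,z)}}-\frac{1}{\sqrt{1-y^2z^2}}\right)\mathrm{d}z,
\]
and recall from \eqref{def-of-g} that $g(s,y,z)=1-z^{1/s}-y^2(z^2-z^{1/s})=(1-y^2z^2)-(1-y^2)z^{1/s}$. Since $(1-y^2)z^{1/s}\ge 0$, we have $0<g(s,y,z)\le 1-y^2z^2$, so the integrand is nonnegative (this recovers \eqref{comparison-varphi}), and using the elementary inequality $\frac{1}{\sqrt{a-b}}-\frac{1}{\sqrt a}\le \frac{b}{a^{3/2}}\cdot\frac{1}{\text{(something)}}$ — more precisely $\frac1{\sqrt{a-b}}-\frac1{\sqrt a}=\frac{b}{\sqrt a\sqrt{a-b}(\sqrt a+\sqrt{a-b})}\le\frac{b}{(a-b)^{3/2}}$ after a cruder bound, or better $\le \frac{b}{2(a-b)^{3/2}}$ if one keeps $\sqrt a\ge\sqrt{a-b}$ — with $a=1-y^2z^2$, $b=(1-y^2)z^{1/s}$, $a-b=g$, we reduce to bounding $y(1-y^2)\int_0^1 z^{1/s}\,g(s,y,z)^{-3/2}\,\mathrm{d}z$. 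The key observation is that $\int_0^1 z^{1/s}\,g(s,y,z)^{-3/2}\,\mathrm{d}z$ is controlled by $\varphi_s'(y)$ itself: from \eqref{derivative-varphi}, $\varphi_s'(y)=\int_0^1(1-z^{1/s})g^{-3/2}\,\mathrm{d}z$, so $\int_0^1 z^{1/s}g^{-3/2}\mathrm{d}z = \int_0^1 g^{-3/2}\mathrm{d}z - \varphi_s'(y)$. Hence I first establish \eqref{convergence_rate_of_derivative_vf_s}, and then feed it back.

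For \eqref{convergence_rate_of_derivative_vf_s}, the cleanest route is to compute $\varphi_s'(y)-(1-y^2)^{-1/2}$ as an integral and estimate the integrand. Note $(1-y^2)^{-1/2}=\int_0^1(1-y^2z^2)^{-3/2}(1-y^2z^2+\text{correction})\,\mathrm{d}z$ is awkward; instead use the identity $\frac{d}{dy}\arcsin y=(1-y^2)^{-1/2}$ together with $\arcsin y=y\int_0^1(1-y^2z^2)^{-1/2}\mathrm{d}z$, differentiate under the integral sign to get $(1-y^2)^{-1/2}=\int_0^1(1-y^2z^2)^{-3/2}\mathrm{d}z$ — wait, that needs checking; the clean statement is $\frac{d}{dy}\big(y\,h(y)\big)$ with $h(y)=\int_0^1(1-y^2z^2)^{-1/2}\mathrm{d}z$, giving $(1-y^2)^{-1/2}=\int_0^1(1-y^2z^2)^{-3/2}\mathrm{d}z$ after simplification using $\partial_y[z^{-1}(1-y^2z^2)^{-1/2}]$-type manipulations. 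Granting the analogous clean integral formula for both terms (which the paper presumably sets up, since \eqref{derivative-varphi} is already the $z$-integral form of $\varphi_s'$), I then write $\varphi_s'(y)-(1-y^2)^{-1/2}=\int_0^1\big[(1-z^{1/s})g^{-3/2}-(1-y^2z^2)^{-3/2}\big]\mathrm{d}z$ and bound the bracket. Splitting $(1-z^{1/s})g^{-3/2}-(1-y^2z^2)^{-3/2} = \big[g^{-3/2}-(1-y^2z^2)^{-3/2}\big] - z^{1/s}g^{-3/2}$, the first piece is $\le \frac{3}{2}(1-y^2)z^{1/s}g^{-5/2}$ by convexity of $t\mapsto t^{-3/2}$ and $g\le 1-y^2z^2$, while the second is bounded by $z^{1/s}g^{-3/2}$; both involve $z^{1/s}$, which localizes mass near $z=1$ where $g(s,y,1)=1-y^2$. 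A substitution $z=1-s u$ or $z^{1/s}=w$ should then extract the factor $s$ together with the correct power of $(1-y^2)$.

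The main obstacle I anticipate is not the crude pointwise inequalities but the \emph{sharp tracking of the power of $(1-y^2)$} while simultaneously extracting exactly one power of $s$ and keeping the numerical constant at $2$. Near $z=1$ the denominator $g$ behaves like $(1-y^2) + (1-y^2z^2)-(1-y^2)= (1-y^2)(1-z^{1/s}) + y^2(1-z^2)$, and the competition between the two terms $(1-y^2)(1-z^{1/s})$ and $y^2(1-z^2)$ as $y\to 1$ must be handled carefully — a naive bound $g\ge y^2(1-z^2)$ loses the $(1-y^2)$ gain, while $g\ge (1-y^2)(1-z^{1/s})$ alone is too weak in the middle range of $z$. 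The right move is to split the $z$-integral at a threshold like $z^{1/s}=1/2$ (equivalently $1-z\sim s$): on $z$ close to $1$ use $g\ge(1-y^2)(1-z^{1/s})$ and on $z$ bounded away use $g\ge c\,z^{1/s}\cdot$(quadratic terms) or simply $g\ge$ a fixed fraction of $1-y^2z^2$. After the split, each piece yields $\int z^{1/s}\cdot(\dots)$ whose $z$-integral contributes the $s$ factor (since $\int_0^1 z^{1/s}\mathrm{d}z=\frac{s}{s+1}\le s$, and similar for weighted versions), and careful bookkeeping of which $(1-y^2)^{-\ell}$ appears gives $\ell=3/2$ for the derivative and $\ell=1/2$ for $\varphi_s$ itself. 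Once \eqref{convergence_rate_of_derivative_vf_s} is in hand with constant $2$, the bound \eqref{convergence_rate_of_vf_s_1} follows by integrating in $y$ from $0$ (both sides vanish at $y=0$): $|\varphi_s(y)-\arcsin y|\le\int_0^y 2s(1-t^2)^{-3/2}\mathrm{d}t = 2s\,y(1-y^2)^{-1/2}$, which is exactly the claimed inequality and explains the precise form of the right-hand side.
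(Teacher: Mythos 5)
Your reduction of \eqref{convergence_rate_of_vf_s_1} to \eqref{convergence_rate_of_derivative_vf_s} is correct and rather elegant: since $\varphi_s(0)=\arcsin 0=0$, integrating the derivative bound gives $|\varphi_s(y)-\arcsin y|\le 2s\int_0^y(1-t^2)^{-3/2}\,\mathrm{d}t=2sy(1-y^2)^{-1/2}$, with exactly the claimed constant and weight. The paper instead proves \eqref{convergence_rate_of_vf_s_1} directly by a separate integral manipulation, so this part of your plan is a genuine (and arguably cleaner) alternative — but it stands or falls with your proof of \eqref{convergence_rate_of_derivative_vf_s}, which is where the problem lies.

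The gap is in the decomposition $(1-z^{1/s})g^{-3/2}-(1-y^2z^2)^{-3/2}=\bigl[g^{-3/2}-(1-y^2z^2)^{-3/2}\bigr]-z^{1/s}g^{-3/2}$. You assert at one point that $g(s,y,1)=1-y^2$; in fact $g(s,y,1)=1-1-y^2(1-1)=0$, and your own identity $g=(1-y^2)(1-z^{1/s})+y^2(1-z^2)$ shows $g\sim c(1-z)$ linearly as $z\to 1$. Consequently both pieces of your split are individually non-integrable: $\int_0^1 z^{1/s}g^{-3/2}\,\mathrm{d}z=+\infty$ and the mean-value bound $\tfrac32(1-y^2)z^{1/s}g^{-5/2}$ for the first piece integrates to $+\infty$ as well (the weight $z^{1/s}$ tends to $1$ at $z=1$ and does not help, nor does splitting the integral at $z^{1/s}=1/2$, since the divergence sits entirely in the region $z$ near $1$). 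The original integrand is finite only because the numerator $1-z^{1/s}$ vanishes at $z=1$ to compensate the vanishing of $g$; any estimate must keep that factor attached. The paper's fix is to use the multiplicative lower bound $g\ge(1-y^2z^2)(1-z^{1/s})$ from \eqref{estimate of g_1}, so that $\frac{1-z^{1/s}}{g^{3/2}}\le\frac{(1-z^{1/s})^{-1/2}}{(1-y^2z^2)^{3/2}}$, leaving only the integrable singularity $(1-z^{1/s})^{-1/2}$, whence $\int_0^1[(1-z^{1/s})^{-1/2}-1]\,\mathrm{d}z\le 2s$ after $t=z^{1/s}$; the opposite sign is handled with the trivial upper bound $g\le 1-y^2z^2$. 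The same caution applies to your fallback direct route to \eqref{convergence_rate_of_vf_s_1}: the "cruder bound" $\frac{b}{(a-b)^{3/2}}=\frac{(1-y^2)z^{1/s}}{g^{3/2}}$ is likewise non-integrable, whereas keeping $\sqrt{a}=\sqrt{1-y^2z^2}$ in two of the three denominator factors (as the paper does) leaves only $g^{-1/2}\sim(1-z)^{-1/2}$, which converges.
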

\begin{proof}[Proof of Lemma \ref{diff-varphi-and-limit}] Recalling \eqref{def-of-g},
	it is easy to check 
	\ben\label{estimate of g_1}
	(1-y^{2}z^2)(1-z^{1/s})\leq g(s,y,z) \leq (1-y^{2}z^2), \quad z\in[0,1],y\in[0,1], s\in(0,1].
	\een
	We will frequently use \eqref{estimate of g_1}.
     Recalling \eqref{comparison-varphi},
    we have
\beno
    \begin{aligned}
   0\le\vf_s(y)-\arcsin y =     &y\int_{0}^{1} \left(\frac{1}{\sqrt{g(s,y,z)}}-\frac{1}{\sqrt{1-y^{2}z^{2}}} \right)\mathrm{d}z \\
        = &
        y\int_{0}^{1}\frac{(1-y^{2}z^{2})-g(s,y,z)}{\sqrt{1-y^{2}z^{2}}\sqrt{g(s,y,z)}(\sqrt{g(s,y,z)}+\sqrt{1-y^{2}z^{2}})}\mathrm{d}z\\  \eqref{def-of-g}
        = & y(1-y^{2})\int_{0}^{1}\frac{z^{1/s}}{\sqrt{1-y^{2}z^{2}}\sqrt{g(s,y,z)}(\sqrt{g(s,y,z)}+\sqrt{1-y^{2}z^{2}})}\mathrm{d}z\\
        \eqref{estimate of g_1}\leq & y(1-y^2)\int_{0}^{1}\frac{z^{1/s}}{(1-y^{2}z^{2})\sqrt{g(s,y,z)}(\sqrt{1-z^{1/s}}+1)}\mathrm{d}z\\
        \leq &
        y\int_{0}^{1}\frac{z^{1/s}}{\sqrt{g(s,y,z)}}\mathrm{d}z.
    \end{aligned}
 \eeno
   Now  by \eqref{estimate of g_1}, using the change of variable $t=z^{1/s}$,
    $$ y\int_{0}^{1}\frac{z^{1/s}}{\sqrt{g(s,y,z)}}\mathrm{d}z\leq \frac{y}{\sqrt{1-y^2}}\int_{0}^{1}\frac{z^{1/s}}{\sqrt{1-z^{1/s}}}\mathrm{d}z=\frac{y}{\sqrt{1-y^2}}sB(s+1,\frac{1}{2})\le\frac{y}{\sqrt{1-y^2}}sB(1,\frac{1}{2}).$$
    Here $B(a,b) \colonequals \int_{0}^{1}  t^{a-1} (1-t)^{b-1} \mathrm{d} t$ is the Beta function.
    Noting that $B(1,\frac{1}{2})=2$, we get \eqref{convergence_rate_of_vf_s_1}. 
    
    We now prove \eqref{convergence_rate_of_derivative_vf_s} on the derivative $\vf'_s$. As 
    $\vf'_s(y)-\frac{1}{\sqrt{1-y^2}}$ has no definite sign, we need to consider its absolute value 
    $|\vf'_s(y)-\frac{1}{\sqrt{1-y^2}}|$. Indeed,  $\vf_s'$ is bigger near $y=0$ and smaller near $y=1$ than $\frac{1}{\sqrt{1-y^2}}$. For more details, see the proof of Lemma 
    \ref{y-near-1-asy}. We separately consider the two differences $\vf'_s(y)-\frac{1}{\sqrt{1-y^2}}$
    and $\frac{1}{\sqrt{1-y^2}}-\vf_s'(y)$. Using
    \ben\label{expression1}
    \int_{0}^{1}\frac{1}{(1-y^{2}z^{2})^{3/2}}\mathrm{d}z=\frac{1}{\sqrt{1-y^2}},
    \een
    and recalling \eqref{derivative-varphi}, 
    $$
    \begin{aligned}
        \vf'_s(y)-\frac{1}{\sqrt{1-y^2}}&= \int_{0}^{1} \left( \frac{1-z^{1/s}}{g^{3/2}(s,y,z)}-\frac{1}{(1-y^{2}z^2)^{3/2}} \right) \mathrm{d}z \\
     \eqref{estimate of g_1}    &\leq  \int_{0}^{1} \left( \frac{1-z^{1/s}}{(1-y^{2}z^2)^{3/2}(1-z^{1/s})^{3/2}}-\frac{1}{(1-y^{2}z^2)^{3/2}} \right) \mathrm{d}z \\
        & =
       \int_{0}^{1} \frac{1}{(1-y^2 z^2)^{3/2}} \left((1-z^{1/s})^{-1/2}-1\right)\mathrm{d}z\\
        & \leq
        \frac{1}{(1-y^2)^{3/2}}\int_{0}^{1} \left((1-z^{1/s})^{-1/2}-1\right)\mathrm{d}z.  
    \end{aligned}
    $$
    Using the change of variables $t=z^{1/s}$,
    $$\int_{0}^{1}\left((1-z^{1/s})^{-1/2}-1\right)\mathrm{d}z=s\int_{0}^{1}[(1-t)^{-1/2}-1]t^{s-1}\mathrm{d}t.$$
  The integral is uniformly bounded for $0<s \leq 1$ by simply analyzing the singularity at $t=0$ or $t=1$. Indeed, $\int_{0}^{1}[(1-t)^{-1/2}-1]t^{s-1}\mathrm{d}t\le \int_{0}^{1}[(1-t)^{-1/2}-1]t^{-1}\mathrm{d}t=1.38629436111989...\le 2$. Then we have
  \ben \label{direction-1}
    	\vf'_s(y)-\frac{1}{\sqrt{1-y^2}} \leq
    	\frac{2s}{(1-y^2)^{3/2}}.  
    \een

    On the other direction, using the upper bound in \eqref{estimate of g_1},
    \beno
    \begin{aligned}
        \frac{1}{\sqrt{1-y^2}}-\vf'_s(y)&=\int_{0}^{1}\frac{1}{(1-y^{2}z^{2})^{3/2}}-\frac{1-z^{1/s}}{g^{3/2}}\mathrm{d}z\\
        &\leq
        \int_{0}^{1}\frac{z^{1/s}}{(1-y^{2}z^{2})^{3/2}}\mathrm{d}z\\
        & \le
        \frac{1}{(1-y^2)^{3/2}}\int_{0}^{1}z^{1/s}\mathrm{d}z = \frac{s}{s+1} \frac{1}{(1-y^2)^{3/2}}
        & \leq \frac{s}{(1-y^2)^{3/2}},
    \end{aligned}
\eeno
From this together with \eqref{direction-1}, we have \eqref{convergence_rate_of_derivative_vf_s}.
\end{proof}

In this article, we often exchange $1-y$ and $1-y^2$ as
$\f12 (1-y^2) \leq 1-y \leq 1-y^2$. 

The authors in \cite{Jang2023-df} proved that the following limits
$$y_s(\vf)\to \sin\vf, \quad y'_s(\vf)\to \cos \vf, \quad s\to 0,$$
hold locally uniform for $\vf\in[0,\frac{\pi}{2})$. We further prove that
\begin{lem} \label{diff-y-and-limit} For any  $0 \leq \vf \leq  \pi/2, 0<s<1$, it holds that
    \begin{align}
    \label{convergence_rate_of_y_s}
    |y_s(\vf)-\sin\vf| &\leq 2ys.
\\
 \label{convergence_rate_of_y'_s}
    |y'_s(\vf)-\cos\vf|&\leq 6s (1-y_s^2(\vf))^{-1} y'_s(\vf).
    \end{align}

\end{lem}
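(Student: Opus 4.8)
The plan is to transfer the stability bounds of Lemma~\ref{diff-varphi-and-limit} (for $\varphi_s$ and $\varphi_s'$) through the inversion $y_s=\varphi_s^{-1}$, the key structural input being the one-sided comparison \eqref{relation of sin vf and y}: for $\varphi\in(0,\pi/2)$ one has $y_s(\varphi)<\sin\varphi$, equivalently $\arcsin y_s(\varphi)<\varphi$. Throughout I abbreviate $y=y_s(\varphi)$, so that $\varphi=\varphi_s(y)$ and $\arcsin y\le\varphi$; since $\cos$ is decreasing on $[0,\pi/2]$ this gives the crucial pointwise bound $\cos\varphi\le\cos(\arcsin y)=\sqrt{1-y^2}$, which is exactly what will absorb the $(1-y^2)^{-1/2}$ and $(1-y^2)^{-3/2}$ factors appearing on the right-hand sides of Lemma~\ref{diff-varphi-and-limit}.

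For \eqref{convergence_rate_of_y_s}: I write $0\le\sin\varphi-y=\sin\varphi-\sin(\arcsin y)=\int_{\arcsin y}^{\varphi}\cos t\,\mathrm{d}t\le\sqrt{1-y^2}\,(\varphi-\arcsin y)$, using $\cos t\le\sqrt{1-y^2}$ on $[\arcsin y,\varphi]$. Then \eqref{convergence_rate_of_vf_s_1} gives $\varphi-\arcsin y=\varphi_s(y)-\arcsin y\le 2sy(1-y^2)^{-1/2}$, so the two $\sqrt{1-y^2}$ factors cancel and $|y_s(\varphi)-\sin\varphi|\le 2sy$. The endpoints $\varphi=0,\pi/2$ are immediate since $y_s$ equals $0$ and $1$ there.

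For \eqref{convergence_rate_of_y'_s}: differentiating $\varphi_s(y_s(\varphi))=\varphi$ yields $\varphi_s'(y)\,y_s'(\varphi)=1$, hence
\[
y_s'(\varphi)-\cos\varphi=y_s'(\varphi)\bigl(1-\cos\varphi\,\varphi_s'(y)\bigr),
\]
and it suffices to prove $|1-\cos\varphi\,\varphi_s'(y)|\le 6s(1-y^2)^{-1}$. I split
\[
1-\cos\varphi\,\varphi_s'(y)=\frac{\sqrt{1-y^2}-\cos\varphi}{\sqrt{1-y^2}}+\cos\varphi\Bigl(\tfrac{1}{\sqrt{1-y^2}}-\varphi_s'(y)\Bigr).
\]
For the first term, $0\le\sqrt{1-y^2}-\cos\varphi=\cos(\arcsin y)-\cos\varphi=\int_{\arcsin y}^{\varphi}\sin t\,\mathrm{d}t\le\varphi-\arcsin y\le 2sy(1-y^2)^{-1/2}$ by \eqref{convergence_rate_of_vf_s_1}, so this term is at most $2sy(1-y^2)^{-1}\le 2s(1-y^2)^{-1}$. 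For the second term, \eqref{convergence_rate_of_derivative_vf_s} bounds the bracket by $2s(1-y^2)^{-3/2}$, and $\cos\varphi\le\sqrt{1-y^2}$, so this term is at most $2s(1-y^2)^{-1}$. Adding, $|1-\cos\varphi\,\varphi_s'(y)|\le 4s(1-y^2)^{-1}\le 6s(1-y^2)^{-1}$, which gives \eqref{convergence_rate_of_y'_s} after multiplying by $y_s'(\varphi)>0$; at $\varphi=\pi/2$ the right-hand side is $+\infty$, so nothing is needed there.

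The one genuinely delicate point --- flagged in the Main difficulties section as the amplification of small errors near $\varphi=\pi/2$, where $1-y^2\to0$ --- is precisely dealt with by the exact sign $\cos\varphi\le\sqrt{1-y^2}$: it makes the inversion lossless, turning the $(1-y^2)^{-1/2}$ (resp.\ $(1-y^2)^{-3/2}$) weights of Lemma~\ref{diff-varphi-and-limit} into the $(1-y^2)^{0}$ (resp.\ $(1-y^2)^{-1}$) weights claimed here. I therefore expect no serious analytic obstacle beyond keeping the sign information straight and dispatching the boundary case $y\to1$ separately; the generous constant $6$ (versus the $4$ the above estimates actually yield) leaves comfortable slack.
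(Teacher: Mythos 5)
Your proof is correct and follows essentially the same route as the paper: the mean-value/integral argument for \eqref{convergence_rate_of_y_s}, the identity $y_s'(\vf)-\cos\vf=y_s'(\vf)\bigl(1-\vf_s'(y)\cos\vf\bigr)$, the same two-term split, and the same use of $\cos\vf\le\sqrt{1-y^2}$ from \eqref{relation of sin vf and y} to absorb the singular weights of Lemma~\ref{diff-varphi-and-limit}. The only (harmless) deviation is in the first term, where you bound $\sqrt{1-y^2}-\cos\vf$ directly by $\vf-\arcsin y$ via \eqref{convergence_rate_of_vf_s_1}, getting $2s(1-y^2)^{-1}$, whereas the paper passes through $1-A^{1/2}\le 1-A$ and \eqref{convergence_rate_of_y_s} to get $4s(1-y^2)^{-1}$; your version yields the slightly better total $4s(1-y_s^2)^{-1}y_s'(\vf)$, which of course implies the stated bound with $6$.
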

\begin{rmk}
    In the \cite{Jang2023-df}, the convergence of $y_s(\vf)\to \sin\vf$ is proved locally uniform for $\vf\in[0,\frac{\pi}{2})$ by using some analytic property of the involved functions. Here we will show that the convergence \eqref{convergence_rate_of_y_s} is uniform for $\vf\in[0.\frac{\pi}{2}]$ by direct computation.
\end{rmk}
\begin{proof}[Proof of Lemma \ref{diff-y-and-limit}]
Let $\vf \in (0,\frac{\pi}{2})$, 
since $y_s(\vf)\in (0,1)$, we can rewrite $y_s(\vf)=\sin(\arcsin(y_s(\vf)))$.
    $$
    \begin{aligned}
        |y_s(\vf)-\sin\vf|
        \leq & 
        |\sin(\arcsin y_s(\vf))-\sin(\vf)|\\
        = &
        |\cos w||\arcsin y_s(\vf)-\vf_s(y_s(\vf))|.
    \end{aligned}
    $$
    By \eqref{relation of sin vf and y}, $\arcsin y_s(\vf)<\vf$, $w\in(\arcsin y_s(\vf),\vf)$. Then $0\leq\cos w \leq \cos\arcsin y_s(\vf)=\sqrt{1-y_s(\vf)^2}$. Apply \eqref{convergence_rate_of_vf_s_1} by taking $y=y_s(\vf)$, we get the uniform convergence \eqref{convergence_rate_of_y_s}.

We now prove \eqref{convergence_rate_of_y'_s}. Recall $y_s$ is the inverse function of $\vf_s$, and so $\vf_s'(y)=(y'_s(\vf))^{-1}$. we make the rearrangement:
\beno
y'_s(\vf)-\cos\vf=y'_s(\vf) (1- \frac{\cos\vf}{y'_s(\vf)})  =y'_s(\vf) (1- \vf_s'(y)  \cos\vf_s(y)),
\eeno
Also recall $\vf_s'(y)$ has limit $(1-y^2)^{-1/2}$ as $s \to 0$.
\beno
	1- \vf_s'(y)  \cos\vf_s(y) = \left( 1- (1-y^2)^{-1/2} \cos\vf_s(y) \right) + \left((1-y^2)^{-1/2} - \vf_s'(y)\right)  \cos\vf_s(y) \equalscolon I_1 + I_2 .
\eeno
We first consider $I_2$.
From \eqref{relation of sin vf and y},  $ \sin\vf_s(y) > y$ and so $\cos\vf_s(y) \leq  (1-y^2)^{1/2}$. Now using \eqref{convergence_rate_of_derivative_vf_s}, we have
\beno
| I_2| =  \Big|\left((1-y^2)^{-1/2} - \vf_s'(y)\right)  \cos\vf_s(y)\Big| \leq  2s (1-y^2)^{-3/2} \times (1-y^2)^{1/2} = 2s (1-y^2)^{-1}.
\eeno
We then consider $I_1$. Let $A \colonequals \frac{1-\sin^2\vf_s(y) }{1-y^2} \leq 1$, then
\beno
0 \leq I_1 =   1- (1-y^2)^{-1/2} \cos\vf_s(y)  = 1- A^{1/2} \leq  1 - A = \frac{\sin^2\vf_s(y) -y^2}{1-y^2}
\leq 2 \frac{\sin \vf_s(y) -y}{1-y^2}.
\eeno
Now using \eqref{convergence_rate_of_y_s}, we have $0 \leq I_1 \leq 4s (1-y^2)^{-1}$.  Combining $I_1$ and $I_2$, we have \eqref{convergence_rate_of_y'_s}.
\end{proof}

 Observe $\th\to0\iff \vf\to\frac{\pi}{2}\iff y \to 1$, so both $\th^{-1}$ and $(1-y)^{-1}$ are singular factors. We now relate $(1-y)^{-1}$ to $\th^{-1}$ in the following lemma.
\begin{lem}\label{y-near-1-asy} For any  $0 < \theta \leq  \pi, 0<s<1$, it holds that
\ben\label{estimate of 1-y}
(1-y)^{-1} \leq \min\left\{4 s^{-\frac{1}{2}}, \pi^{2}\th^{-1}\right\} \theta^{-1}.
\een
\end{lem}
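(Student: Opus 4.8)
The plan is to establish the two bounds $(1-y)^{-1}\le 4s^{-1/2}\theta^{-1}$ and $(1-y)^{-1}\le\pi^{2}\theta^{-2}$ separately — here and below $y=y_s(\varphi)$ with $\varphi=\tfrac{\pi}{2}-\tfrac{\theta}{2}$ — since $\min\{4s^{-1/2}\theta^{-1},\pi^{2}\theta^{-2}\}$ is exactly the right-hand side of \eqref{estimate of 1-y}. The second bound is elementary: by \eqref{relation of sin vf and y}, $y=y_s(\varphi)<\sin\varphi=\cos(\theta/2)$, hence $1-y>1-\cos(\theta/2)=2\sin^{2}(\theta/4)$; since $\sin$ is concave on $[0,\pi/4]$ and $\theta/4\in(0,\pi/4]$, comparing with the chord through $0$ and $\pi/4$ gives $\sin(\theta/4)\ge\tfrac{2\sqrt2}{\pi}\cdot\tfrac{\theta}{4}$, whence $1-y>\theta^{2}/\pi^{2}$.

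For the first bound I would reduce everything to the uniform derivative estimate $\varphi_s'(y)\le 2s^{-1/2}$ for all $y\in[0,1)$ and $s\in(0,1)$. Granting this, since $\varphi_s$ maps $[0,1]$ onto $[0,\tfrac{\pi}{2}]$ (so $\varphi_s(1)=\tfrac{\pi}{2}$), the fundamental theorem of calculus gives
\[
\frac{\theta}{2}=\frac{\pi}{2}-\varphi_s(y)=\int_y^{1}\varphi_s'(t)\,dt\le\frac{2}{\sqrt{s}}\,(1-y),
\]
which rearranges to $(1-y)^{-1}\le 4s^{-1/2}\theta^{-1}$.

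The uniform bound on $\varphi_s'$ rests on rewriting \eqref{def-of-g} as a convex combination,
\[
g(s,y,z)=(1-y^{2})(1-z^{1/s})+y^{2}(1-z^{2}),
\]
so that $g(s,y,z)\ge\min\{\,1-z^{1/s},\,1-z^{2}\,\}$ for every $y\in[0,1]$. If $s\le\tfrac12$ then $1/s\ge 2$, so $1-z^{1/s}\ge 1-z^{2}$, hence $g\ge 1-z^{2}$ and \eqref{derivative-varphi} gives $\varphi_s'(y)\le\int_0^1\frac{1-z^{1/s}}{(1-z^{2})^{3/2}}\,dz=\varphi_s'(1)$; if $s>\tfrac12$ then $1/s<2$, so $1-z^{1/s}\le 1-z^{2}$, hence $g\ge 1-z^{1/s}$ and $\varphi_s'(y)\le\int_0^1(1-z^{1/s})^{-1/2}\,dz=\varphi_s'(0)$. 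It then remains to bound these endpoint values by $2s^{-1/2}$. For $s>\tfrac12$: $1/s\ge1$ gives $z^{1/s}\le z$, hence $1-z^{1/s}\ge 1-z$, so $\varphi_s'(0)\le\int_0^1(1-z)^{-1/2}\,dz=2\le 2s^{-1/2}$. For $s\le\tfrac12$: the substitution $z=\sin\alpha$ and one integration by parts yield the classical identity $\varphi_s'(1)=\tfrac1s\int_0^{\pi/2}\sin^{1/s}\alpha\,d\alpha=\tfrac1s W_{1/s}$ (the value already recorded in \eqref{convergence-of-b-s-theta-to-0}), and the Wallis estimate $W_n<\sqrt{\pi/(2n)}$ — which follows from $W_n^{2}<W_{n-1}W_n=\tfrac{\pi}{2n}$ — then gives $\varphi_s'(1)<\sqrt{\pi/(2s)}<2s^{-1/2}$, since $\sqrt{\pi/2}<2$. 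Combining the two regimes proves the uniform bound on $\varphi_s'$, hence the lemma.

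The step I expect to be the main obstacle is precisely this uniform control of $\varphi_s'(y)$ up to $y=1$: the stability estimate $|\varphi_s'(y)-(1-y^{2})^{-1/2}|\le 2s(1-y^{2})^{-3/2}$ from Lemma~\ref{diff-varphi-and-limit} blows up at the endpoint and cannot be integrated there, so a genuinely different argument is needed; the convex-combination form of $g$ (which trivializes the monotonicity of $g$ in $y$), combined with the exact Wallis value of $\varphi_s'(1)$, is what makes the sharp constant attainable.
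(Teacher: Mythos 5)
Your proof is correct and follows essentially the same route as the paper: both bounds reduce to controlling $\varphi_s'$ on $(y,1)$ by the endpoint value $\varphi_s'(1)=\frac{1}{s}W_{1/s}\le\sqrt{\pi/2}\,s^{-1/2}$ for $s\le\frac12$ or $\varphi_s'(0)\le 2$ for $s>\frac12$, combined with $1-y>1-\cos(\theta/2)$; the paper uses the mean value theorem together with the monotonicity of $\varphi_s'$ in $y$ where you use the fundamental theorem of calculus together with the convex-combination form of $g$, which is only a cosmetic difference. Your chord bound $\sin(\theta/4)\ge\frac{2\sqrt{2}}{\pi}\cdot\frac{\theta}{4}$ is in fact the inequality needed to reach the stated constant $\pi^{2}\theta^{-2}$ (the inequality $\sin x\ge\frac{2x}{\pi}$ quoted in the paper would by itself only yield $2\pi^{2}\theta^{-2}$), so that detail is handled more carefully in your version.
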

\begin{proof}[Proof of Lemma  \ref{y-near-1-asy}] For any 
    $\theta \in(0,\pi]$, denote the corresponding value $\varphi = \frac{\pi}{2}-\frac{\theta}{2} \in [0, \pi/2), y=y_s(\varphi) \in [0,1)$.
By mean value theorem,
$$\frac{1-y_s(\varphi)}{\frac{\pi}{2}-\varphi}=\frac{y_s\left(\frac{\pi}{2}\right)-y_s(\varphi)}{\frac{\pi}{2}-\varphi}=y_s^{\prime}(\tilde{\varphi})=\frac{1}{\varphi_s^{\prime}(\tilde{y})}.$$
for some $\tilde{\varphi} \in (\varphi,\pi/2), \tilde{y} \in (y,1)$. Recalling \eqref{derivative-varphi},
 it is easy to see that $\varphi_{s}^{\prime}(y)$ is a constant $\frac{\pi}{2}$ independent of $y$ if $s=\frac{1}{2}$;
 increasing in $y\in[0,1]$ for $s<\frac{1}{2}$; decreasing in $y\in[0,1]$ for $s>\frac{1}{2}$. That is,
\ben \label{s-small-derivetive-varphi}
    \vf_s'(0)\le \vf'_s(y)\le \vf_s'(1),\quad  y\in[0,1],s\in (0,\frac{1}{2}],\\
    \label{s-large-derivetive-varphi}
    \vf_s'(1)\le \vf'_s(y)\le \vf_s'(0), \quad  y\in[0,1],s\in[\frac{1}{2},1).
\een 
The value $\vf_s'(0)$ is bounded uniformly in $0<s<1$,
\ben \label{vf-s-prime-0}
1=\int_{0}^{1}1\mathrm{d}z\le\vf'_s(0)=\int_{0}^{1}(1-z^{1/s})^{-1/2}\mathrm{d}z\le\int_{0}^{1}(1-z)^{-\frac{1}{2}}\mathrm{d}z=2,\quad  s\in(0,1).
\een
Recall the Wallis integral $W_n=\int_{0}^{\pi/2}\sin^{n}x\mathrm{d}x=\frac{1}{2}B(\frac{n+1}{2},\frac{1}{2})$ enjoys the estimate  $1 \leq \sqrt{n} W_n \leq \sqrt{\frac{\pi}{2}}$ for $n \geq 1$.
Thanks to the calculation of $\vf'_s(1)$ in 
\cite{Jang2023-df}, we get
\ben \label{vf-s-prime-1}
\frac{1}{\sqrt{s}} \leq \vf'_s(1)=\int_{0}^{1}\frac{1-z^{1/s}}{(1-z^2)^{3/2}}\mathrm{d}z=\frac{1}{s}W_{\frac{1}{s}} \leq  \sqrt{\frac{\pi}{2}} \frac{1}{\sqrt{s}}.
\een
Therefore, we can derive that in the case of $s\in(0,\frac{1}{2})$,
$$\frac{1-y_s(\varphi)}{\frac{\pi}{2}-\varphi}=\frac{1}{\varphi_s^{\prime}(\tilde{y})}\ge\frac{1}{\vf_s'(1)}
\quad \Longrightarrow \quad
 (1-y)^{-1}\le (\frac{\th}{2})^{-1}\vf_s'(1)\le \sqrt{2\pi} s^{-1/2}\th^{-1},$$
and in the case of $s\in(\frac{1}{2},1)$,
$$
\frac{1-y_s(\varphi)}{\frac{\pi}{2}-\varphi}=\frac{1}{\varphi_s^{\prime}(\tilde{y})}\ge\frac{1}{\vf_s'(0)} \quad \Longrightarrow \quad
(1-y)^{-1}\le (\frac{\th}{2})^{-1}\vf_s'(0)\le4\th^{-1}\le 4 s^{-1/2}\th^{-1}.
$$

By \eqref{relation of sin vf and y}, we have 
$$
1-y>1-\sin \vf=1-\cos \frac{\th}{2} = 2\sin^2\frac{\th}{4} \geq \frac{\th^2}{\pi^2} \quad \Longrightarrow \quad  (1-y)^{-1}\le  \pi^2 \th^{-2}.
$$
Here, we use the basic inequality $\sin x\ge \frac{2x}{\pi},x\in[0,\frac{\pi}{4}]$. Patching together the above estimates, we obtain \eqref{estimate of 1-y}.
\end{proof}

Using the identity $y'_s(\vf)=\frac{1}{\vf'_s(y_s(\vf))}$, we can also get a uniform bound of $y'_s(\vf)$ which will be used later on. From \eqref{s-small-derivetive-varphi}
and \eqref{s-large-derivetive-varphi},
$$
\begin{aligned}
	\vf_s'(1)^{-1}\le y'_s(\vf)\le \vf_s'(0)^{-1}, \quad  \vf\in[0,\frac{\pi}{2}],s\in(0,\frac{1}{2}],\\
	\vf_s'(0)^{-1}\le y'_s(\vf)\le \vf_s'(1)^{-1}, \quad  \vf\in[0,\frac{\pi}{2}],s\in[\frac{1}{2},1).
\end{aligned}
$$
Therefore, we have in the case of $s\in(0,\frac{1}{2})$, by \eqref{vf-s-prime-0},
$$y'_s(\vf)\le \vf_s'(0)^{-1}\le 1.$$
and in the case of $s\in[\frac{1}{2},1)$,  by \eqref{vf-s-prime-1},
$$y'_s(\vf)\le \vf_s'(1)^{-1}=sW_{\frac{1}{s}}^{-1}\le \sqrt{s} \leq 1.$$
Combining the two estimates, 
\ben\label{upper bound of y'}
y'_s(\vf)\le 1, \quad  \vf\in[0,\frac{\pi}{2}], s\in(0,1).
\een

With the good factor $y'_s(\vf)$, we can get an improved estimate on $(1-y)^{-1}$
in the following Lemma.

\begin{lem}\label{key-estimate} For any  $0 < \theta \leq  \pi, 0<s<1$, it holds that
\ben\label{estimate of 1-y 2}
(1-y)^{-1}y'_s(\vf)\le 18 \th^{-1}.
\een
\end{lem}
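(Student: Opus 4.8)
The plan is to recast \eqref{estimate of 1-y 2} as a statement about the mean value of $\varphi_s'$. Since $y_s=\varphi_s^{-1}$ we have $y_s'(\varphi)=1/\varphi_s'(y)$, and since $\varphi_s(1)=\frac\pi2$, the relation $2\varphi+\theta=\pi$ gives $\theta=2\bigl(\varphi_s(1)-\varphi_s(y)\bigr)=2\int_y^1\varphi_s'(\eta)\,d\eta$; hence \eqref{estimate of 1-y 2} is equivalent to
\[
\frac{1}{1-y}\int_y^1\varphi_s'(\eta)\,d\eta\ \le\ 9\,\varphi_s'(y).
\]
If $s\ge\frac12$ this is immediate, since then $\varphi_s'$ is non-increasing on $[0,1]$ (recorded after \eqref{derivative-varphi}), so the mean value on $[y,1]$ is at most $\varphi_s'(y)$; the constant $9$ is wasteful here. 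So from now on I assume $s<\frac12$, so that $\varphi_s'$ is non-decreasing, and split according to whether $1-y^2\ge\frac97 s$ or not.

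In the range $1-y^2\ge\frac97 s$ I would bypass the mean-value reformulation. By \eqref{estimate of g_1}, \eqref{expression1} and $\int_0^1 z^{1/s}\,dz\le s$,
\[
\varphi_s'(y)\ \ge\ \int_0^1\frac{1-z^{1/s}}{(1-y^2z^2)^{3/2}}\,dz\ =\ \frac{1}{\sqrt{1-y^2}}-\int_0^1\frac{z^{1/s}\,dz}{(1-y^2z^2)^{3/2}}\ \ge\ \frac{1}{\sqrt{1-y^2}}\Bigl(1-\frac{s}{1-y^2}\Bigr)\ \ge\ \frac{2}{9\sqrt{1-y^2}}.
\]
On the other hand \eqref{relation of sin vf and y} gives $\theta=\pi-2\varphi_s(y)<\pi-2\arcsin y=2\arccos y$, and the elementary bound $\arccos y\le\frac{2(1-y)}{\sqrt{1-y^2}}$ (write $y=\cos\phi$ and use $\phi/2\le\tan(\phi/2)$) then yields $\theta<\frac{4(1-y)}{\sqrt{1-y^2}}\le 18(1-y)\,\varphi_s'(y)$; dividing by $(1-y)\varphi_s'(y)$ and recalling $y_s'(\varphi)=1/\varphi_s'(y)$ is precisely \eqref{estimate of 1-y 2}.

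The remaining case, $s<\frac12$ with $1-y^2<\frac97 s$ (so $y$ lies within $O(\sqrt s)$ of $1$), is the delicate one. Here I would use the mean-value form: $\varphi_s'$ non-decreasing gives $\frac1{1-y}\int_y^1\varphi_s'\le\varphi_s'(1)=\frac1s W_{1/s}$, so it suffices to prove $\varphi_s'(1)\le 9\,\varphi_s'(y)$. With $a:=1-y^2$, the inequality $g(s,y,z)\le 1-y^2z^2=(1-z^2)+az^2$ gives
\[
\varphi_s'(y)\ \ge\ \int_0^1\frac{1-z^{1/s}}{(1-z^2)^{3/2}}\Bigl(\frac{1-z^2}{1-y^2z^2}\Bigr)^{3/2}dz,
\]
and since $z\mapsto\frac{1-z^2}{1-y^2z^2}$ is decreasing, splitting at the point $z^\ast=(2-y^2)^{-1/2}$ (where this factor equals $\frac12$, i.e. $1-(z^\ast)^2=a(z^\ast)^2$) and using that the factor is $\ge 2^{-3/2}$ on $[0,z^\ast]$ reduces everything to the tail estimate
\[
\int_{z^\ast}^1\frac{1-z^{1/s}}{(1-z^2)^{3/2}}\,dz\ \le\ \Bigl(1-\frac{2\sqrt2}{9}\Bigr)\frac1s W_{1/s}.
\]

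I expect this tail estimate to be the main obstacle. Both sides diverge like $s^{-1/2}$ as $s\to0$, and the naive bound $1-z^{1/s}\le s^{-1}(1-z)$ loses too much precisely in the region $1-z\sim s$ where the dominant mass of $\frac1s W_{1/s}$ concentrates, so one must compare the two integrands more honestly. The route I would take is the substitution $z=\sin\psi$, which turns both the tail and $\frac1s W_{1/s}=\int_0^{\pi/2}\frac{1-\sin^{1/s}\psi}{\cos^2\psi}\,d\psi$ into genuine Wallis-type integrals; one then compares them near $\psi=\frac\pi2$ using $\cos^{1/s}\epsilon\le e^{-\epsilon^2/(2s)}$ and $1-\cos^{1/s}\epsilon\le s^{-1}(1-\cos\epsilon)$ (with $\epsilon=\frac\pi2-\psi$), together with the sharp Wallis bounds $\sqrt{\pi/(2(1/s+1))}\le W_{1/s}\le\sqrt{\pi s/2}$. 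Since $z^\ast$ (equivalently the cut in $\psi$) lies within $O(\sqrt s)$ of the endpoint, the tail is a controlled fraction of the total — morally around $40\%$, comfortably below $1-\frac{2\sqrt2}{9}\approx 0.69$ — but the constants must be tracked carefully, and for the finitely many small values $1/s\in[2,5]$ one would just verify the inequality by direct estimation of the integrals.
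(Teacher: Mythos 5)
Your reduction to $\frac{1}{1-y}\int_y^1\varphi_s'(\eta)\,d\eta\le 9\,\varphi_s'(y)$ is exactly the paper's quantity $H_s(y)\le 9$ in \eqref{def-H-s-y}, your treatment of $s\ge\frac12$ coincides with the paper's, and your case split at $1-y^2\gtrsim s$ mirrors the paper's Case 1/Case 2. In the range $1-y^2\ge\frac97 s$ your argument is complete and in fact cleaner than the paper's: the chain $\varphi_s'(y)\ge\frac{1}{\sqrt{1-y^2}}\bigl(1-\frac{s}{1-y^2}\bigr)\ge\frac{2}{9\sqrt{1-y^2}}$ replaces the paper's monotonicity argument on $G(s,y,z)$ for \eqref{varphi-prime-lower-bound}, and your proof of $\arccos y\le 2(1-y)(1-y^2)^{-1/2}$ via $\phi/2\le\tan(\phi/2)$ is a tidier route to \eqref{A.3}.

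The genuine gap is the remaining case $1-y^2<\frac97 s$, which you yourself flag as the delicate one and do not close. You reduce it to the tail estimate $\int_{z^\ast}^1\frac{1-z^{1/s}}{(1-z^2)^{3/2}}\,dz\le\bigl(1-\frac{2\sqrt2}{9}\bigr)\frac1s W_{1/s}$, assert that the ratio is "morally around $40\%$," and defer the constant-tracking; that is a plausibility argument, not a proof, and since both sides blow up like $s^{-1/2}$ the inequality really does hinge on the constants you have not computed. Moreover your fallback of checking "the finitely many small values $1/s\in[2,5]$" does not make sense as stated: $s$ ranges over a continuum in $(0,\frac12)$, so there is no finite list to verify and you would need a uniform argument on a compact range of $s$ as well. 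The paper closes this case by an entirely pointwise mechanism that avoids any asymptotic comparison of integrals: it writes $\varphi_s'(1)-\varphi_s'(y)$ as a double integral and invokes the elementary bound $\frac{z^2-z^{1/s}}{g(s,y,z)}\le\frac{1}{2s}$ (itself proved by showing $F(z)=1-z^{1/s}-(2s+y^2)(z^2-z^{1/s})$ is nonincreasing), which yields $\varphi_s'(1)-\varphi_s'(y)\le\frac34\frac{1-y^2}{s}\varphi_s'(1)\le\frac34\varphi_s'(1)$ and hence $\varphi_s'(y)\ge\frac14\varphi_s'(1)$ with no Wallis asymptotics at all. To complete your write-up you would either need to carry out the tail estimate rigorously for all $s\in(0,\frac12)$ or substitute an argument of the paper's type for this case.
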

\begin{proof}[Proof of Lemma \ref{key-estimate}]
	To prove \eqref{estimate of 1-y 2}, it suffices to prove
	\ben \label{def-H-s-y}
	H_s(y) \colonequals \frac{\frac{\pi}{2}-\varphi_s(y)}{(1-y)\vf'_s(y)} \leq 9.
	\een
	If $s \geq 1/2$, then for some $\tilde{y} \in (y,1)$,	$$H_s(y)=\frac{\frac{\pi}{2}-\vf_s(y)}{(1-y)\vf'_s(y)}=\frac{\vf_s(1)-\vf_s(y)}{(1-y)\vf'_s(y)}=\frac{\vf'_s(\tilde{y})}{\vf'_s(y)}\le 1.$$
	Here in the last inequality, we used the fact that  $\vf'_s(y)$ is non-increasing in $y\in[0,1]$ for $s \geq \frac{1}{2}$ by recalling \eqref{derivative-varphi}.
From now on, we consider $s < 1/2$	and divide the following proof into two cases: $1-y^2\le s$ or $1-y^2\ge s$.

	\textbf{Case 1:} $1-y^2\le s$. Recalling \eqref{derivative-varphi},
	$$\vf'_s(1)-\vf'_s(y)=\int_{0}^{1}\left(\frac{1-z^{1/s}}{(1-z^2)^{3/2}}-\frac{1-z^{1/s}}{g^{3/2}(s,y,z)}\right)\mathrm{d}z=\int_{0}^{1}(1-z^{1/s})\left(\frac{g^{3/2}(s,y,z)-(1-z^2)^{3/2}}{g^{3/2}(s,y,z)(1-z^2)^{3/2}}\right)\mathrm{d}z$$
	Recalling \eqref{def-of-g} for the definition of $g(s,y,z)$,
	$$g^{3/2}(s,y,z)-(1-z^2)^{3/2}=g^{3/2}(s,t,z)|_{t=1}^{t=y} =3
	\int_{y}^{1}t(z^2-z^{1/s})g^{1/2}(s,t,z)\mathrm{d}t.$$
	We get the double integral:
	\beno
	\vf'_s(1)-\vf'_s(y)=\int_{0}^{1}\int_{y}^{1}\frac{3t(1-z^{1/s})(z^2-z^{1/s})g^{1/2}(s,t,z)}{g^{3/2}(s,y,z)(1-z^2)^{3/2}}\mathrm{d}t\mathrm{d}z
	\\ \le \int_{0}^{1}\int_{y}^{1} \frac{3t(1-z^{1/s})(z^2-z^{1/s})}{g(s,y,z)(1-z^2)^{3/2}}\mathrm{d}t\mathrm{d}z.
	\eeno
	Here in the last inequality, we use the fact $g(s,y,z)$ is decreasing in $y$ if $0<s<1/2$ and so $g(s,t,z)\le g(s,y,z)$. We now claim: if $1-y^2\le s$, then
	\ben\label{A1.2}
	\frac{z^2-z^{1/s}}{g(s,y,z)}\le \frac{1}{2s}.
	\een
	From this, we have	
	$$
	\begin{aligned}
		\vf'_s(1)-\vf'_s(y) &\le \frac{1}{2s}\int_{0}^{1}\int_{y}^{1}\frac{3t(1-z^{1/s})}{(1-z^2)^{3/2}}\mathrm{d}t\mathrm{d}z\\
		&= \frac{1}{2s}\int_{y}^{1}3t\mathrm{d}t\times \vf'_s(1)=\frac{3}{4}\frac{1-y^2}{s}\vf'_s(1) 
		\le \frac{3}{4}\vf'_s(1).
	\end{aligned}
	$$
	From this and \eqref{s-small-derivetive-varphi}, we get
	$$H_s(y)=\frac{\frac{\pi}{2}-\vf_s(y)}{(1-y)\vf'_s(y)}=\frac{\vf'_s(\tilde{y})}{\vf'_s(y)}\le \frac{\vf'_s(1)}{\vf'_s(y)}\le 4.$$

We now prove the claim \eqref{A1.2}. Recalling \eqref{def-of-g} for the definition of $g(s,y,z)$,
		it suffices to prove
		$$
		2s(z^2-z^{1/s})\le 1-z^{1/s}-y^2(z^2-z^{1/s}).
		$$
	 For fixed $0 <y< 1, 0<s<1/2$,	let us define
		\beno
		F(z) \colonequals 1-z^{1/s}-(2s+y^2)(z^2-z^{1/s}).
		\eeno 
		Then 	it suffices to prove $F(z) \geq 0$. Compute the derivative,
		$$\frac{\pa F}{\pa z}=-\frac{1}{s}z^{1/s-1}-(2s+y^2)2z+(2s+y^2)\frac{1}{s}z^{1/s-1}=(-4s-2y^2)z+
		(2+s^{-1}(y^2-1))z^{1/s-1}.$$
		If  $1-y^2\le s$, then
		 $4s+2y^2 \geq 4s + 2(1-s) = 2+2s \geq 2$. Also noting that
	 $2+s^{-1}(y^2-1) \leq 2$, we have
		$$\frac{\pa F}{\pa z}\le -2z+2z^{1/s-1}\le 0,
		\quad \Longrightarrow \quad F(z)\ge F(0)=0.$$

	\textbf{Case 2:} $1-y^2> s$. We first prove for $y \in [0,1]$ that
		\ben\label{arcsiny-ynear1}
	\frac{\pi}{2}-\vf_s(y)\le 2\frac{1-y}{\sqrt{1-y^2}}.
	\een
	Thanks to \eqref{relation of sin vf and y}, we have
	$\frac{\pi}{2}-\vf_s(y)\le \frac{\pi}{2}-\arcsin y$, and so it suffices to prove
	\ben\label{A.3}
	\frac{\pi}{2}-\arcsin y\le 2\frac{1-y}{\sqrt{1-y^2}}, \quad  y \in [0,1].
	\een
	With $y=\sin \alpha = \cos \beta, \alpha,\beta \in[0,\pi/2]$,  
	$$\frac{\pi}{2}-\arcsin y\le 2\frac{1-y}{\sqrt{1-y^2}}\iff \frac{\pi}{2}-\al\le 2\frac{1-\sin\al}{\cos\al}\iff \beta\le \frac{2(1-\cos\beta)}{\sin\beta}.$$
	We now prove the last inequality for $\beta\in[0,\frac{\pi}{2}]$.
	Let us define
	$h(\beta)=\frac{\beta\sin\beta}{1-\cos\beta}$. 	Then it suffices to prove $h(\beta) \leq 2$. 
	It is easy to check
	$$h'(\beta)=\frac{\sin\beta-\beta}{1-\cos\beta}<0, 	\quad \Longrightarrow \quad  h(\beta)\le h(0)=\lim\limits_{\beta\to0}\frac{\beta\sin\beta}{1-\cos\beta}=2.$$

 We now claim: if $1-y^2\ge s$, then
	\ben \label{varphi-prime-lower-bound}
	\vf'_s(y)\ge \frac{2}{9}\frac{1}{\sqrt{1-y^2}}.
	\een
	From \eqref{arcsiny-ynear1} and \eqref{varphi-prime-lower-bound}, we obtain
	$$H_s(y)=\frac{\frac{\pi}{2}-\vf_s(y)}{(1-y)\vf'_s(y)}\le 9.$$
Collecting all the above upper bound estimates on $H_s(y)$, we arrive at \eqref{def-H-s-y}.
	
	Now it remains to prove \eqref{varphi-prime-lower-bound}.  
	Recalling \eqref{derivative-varphi} and \eqref{expression1},
		$$
		\begin{aligned}
			\frac{9}{2}\vf'_s(y)-\frac{1}{\sqrt{1-y^2}}&=\int_{0}^{1} g^{-3/2}(s,y,z) \left(\frac{9}{2}(1-z^{1/s})-\frac{g^{3/2}(s,y,z)}{(1-y^2z^2)^{3/2}} \right)\mathrm{d}z.
		\end{aligned}
		$$
		Let $G(s,y,z) \colonequals \frac{9}{2}(1-z^{1/s})-\left(\frac{g(s,y,z)}{1-y^2z^2}\right)^{3/2}$, then 
		 it suffices to prove $G(s,y,z) \geq 0$. We compute the partial derivative $\frac{\pa G}{\pa z}$ as follows:
		$$
		\begin{aligned}
			\frac{\pa G}{\pa z}&=-\frac{9}{2}\frac{1}{s}z^{1/s-1}-\frac{3}{2}\left(\frac{g(s,y,z)}{1-y^2z^2}\right)^{1/2}\times \frac{\pa}{\pa z}\left(\frac{g(s,y,z)}{1-y^2z^2}\right), \\
			\frac{\pa}{\pa z}\left(\frac{g(s,y,z)}{1-y^2z^2}\right)&=\frac{\pa}{\pa z}\left( 1-\frac{1-y^2}{1-y^2z^2}z^{1/s}  \right)\\
			&=-\left(\frac{1-y^2}{(1-y^2z^2)^2}2y^2z^{1/s+1}+\frac{1-y^2}{1-y^2z^2}\frac{1}{s}z^{1/s-1}\right).
		\end{aligned}
		$$
		Thus
		$$\frac{\pa G}{\pa z}=-\frac{9}{2}\frac{1}{s}z^{1/s-1}+\frac{3}{2}\left(\frac{g(s,y,z)}{1-y^2z^2}\right)^{1/2}\left(\frac{1-y^2}{(1-y^2z^2)^2}2y^2z^{1/s+1}+\frac{1-y^2}{1-y^2z^2}\frac{1}{s}z^{1/s-1}\right).$$
		Using the facts $\frac{g(s,y,z)}{1-y^2z^2}\le 1,\frac{1-y^2}{1-y^2z^2}\le 1$, noting the condition $1-y^2 \geq s$ gives
		$\frac{1}{1-y^2z^2}\le \frac{1}{1-y^2}\le \frac{1}{s}$, for $0<s<1/2$,
		we get
		$$\frac{\pa G}{\pa z}\le -\frac{9}{2}\frac{1}{s}z^{1/s-1}+3\frac{1}{s}z^{1/s+1}+\frac{3}{2}\frac{1}{s}z^{1/s-1}\le 0.$$
		Therefore $G(s,y,z)$ is non-increasing w.r.t. $z$, $G(s,y,z)\ge G(s,y,1)=0$, which
	ends the proof.
	\end{proof} 
	
	We are ready to prove the Theorem \ref{kernel-convergence}.

\begin{proof}[Proof of Theorem \ref{kernel-convergence}] In this proof, $0<s<1$.  We decompose
\ben \label{decomposition-into-three-terms}
I=\Big|b_s(\cos\th)-\frac{1}{4}\Big|=\Big|\frac{1}{2}2^{4s}\frac{\beta_s(y)\beta'_s(y)y'_s(\vf)}{\sin\th}-\frac{1}{4}\Big|\le I_1+I_2+I_3,
\een
where
\ben \label{term-1}
I_1 &=&\frac{1}{2}(2^{4s}-1)\Big|\frac{\beta_s(y)\beta'_s(y)y'_s(\varphi)}{\sin\th}\Big|,\\
\label{term-2}
I_2 &=&\Big|\frac{1}{2}\frac{\beta_s(y)\beta'_s(y)y'_s(\varphi)}{\sin\th}-\frac{1}{2}\frac{\beta_s(y)\beta'_s(y)\cos\vf}{\sin\th}\Big|,
\\ \label{term-3}
I_2 &=&\Big|\frac{1}{2}\frac{\beta_s(y)\beta'_s(y)\cos\vf}{\sin\th}-\frac{1}{4}\Big|=\frac{1}{4}\Big|\frac{\beta_s(y)\beta'_s(y)}{\sin\vf}-1\Big|.
\een 
For simplicity, let
\ben\label{aa2.23}
A_s(y)=2s(1-y^2)^{-1-s},\quad B^{s}(y)=(1-2s)(1-y^2)^{-s},\quad \beta'_s(y)=A_s(y)+B^{s}(y).
\een

In order to make the proof clear, we shall divide it into five steps.

\textbf{Step 1}: \emph{$\th\in(0,\frac{\pi}{2})$. Estimate of $I_1$.}
$$
I_1=\frac{1}{2}(2^{4s}-1)\Big|\frac{\beta_s(y)\beta'_s(y)y'_s(\varphi)}{\sin\th}\Big|=\frac{1}{2}(2^{4s}-1)\Big|\frac{\beta_s(y)A_s(y)y'_s(\varphi)}{\sin\th}\Big|+\frac{1}{2}(2^{4s}-1)\Big|\frac{\beta_s(y)B^{s}(y)y'_s(\varphi)}{\sin\th}\Big|=I_{11}+I_{12}.
$$ 
Recalling \eqref{function-beta-s-def}, we have
\ben\label{aa2.24}
|\beta_s(y)A_s(y)y'_s(\varphi)|=2sy(1-y^2)^{-1-2s}y'_s(\varphi).
\een
By \eqref{estimate of 1-y 2}, we have
\ben\label{aa2.25}
(1-y^2)^{-1}y'_s(\varphi)\le 18\th^{-1}.
\een
From \eqref{estimate of 1-y}, using inequality: $e^{-\frac{1}{e}}\le s^{s}\le 1, s\in[0,1]$, one can derive
\ben\label{aa2.26}
(1-y^2)^{-2s}\le 4^{2s}\th^{-2s}s^{-s}\le 16\th^{-2s}s^{-s}\le 16e^{\frac{1}{e}}\th^{-2s}.
\een
By \eqref{aa2.24}, \eqref{aa2.25} and \eqref{aa2.26},
\ben\label{aa2.27}
|\beta_s(y)A_s(y)y'_s(\varphi)|\le 2ys\times18\times16 e^{\frac{1}{e}}\th^{-1-2s}\le 576e^{\frac{1}{e}}s\th^{-1-2s}.
\een
By using the basic inequality
\ben\label{inequality}
1-x^{\al}\leq\al \ln\frac{1}{x},  \quad x\in(0,1], \al>0,
\een
We take $\al=4s,x=2^{-1}$,
\ben\label{aa2.29}
2^{4s}-1\le 2^{4s}4s\ln 2\le 45s.
\een
Note the fact 
\ben\label{aa2.30}
\frac{\th}{\sin\th}\le\frac{\pi}{2},\quad\th\in(0,\frac{\pi}{2}].
\een
From \eqref{aa2.27}, \eqref{aa2.29} and \eqref{aa2.30}, we have
\ben\label{estimate-of-I_11}
I_{11}=\frac{1}{2}(2^{4s}-1)\Big|\frac{\beta_s(y)A_s(y)y'_s(\varphi)}{\sin\th}\Big|\le 6480\pi e^{\frac{1}{e}}s^{2}\th^{-2-2s}\le6480\pi e^{\frac{1}{e}}s\th^{-2-2s}.
\een
By \eqref{function-beta-s-def}, \eqref{aa2.23} and \eqref{aa2.26},
\ben\label{aa2.32}
|\beta_s(y)B^{s}(y)|=|(1-2s)y(1-y^2)^{-2s}|\le 16e^{\frac{1}{e}}\th^{-2s}.
\een

Using \eqref{upper bound of y'}, \eqref{aa2.29}, \eqref{aa2.30} and \eqref{aa2.32}, we derive
\ben\label{estimate-of-I_12}
I_{12}=\frac{1}{2}(2^{4s}-1)\Big|\frac{\beta_s(y)B^{s}(y)y'_s(\varphi)}{\sin\th}\Big|\le 180\pi e^{\frac{1}{e}}s\th^{-1-2s}\le 180\pi^2e^{\frac{1}{e}}s\th^{-2-2s}.
\een
We patch together \eqref{estimate-of-I_11} and \eqref{estimate-of-I_12} to get
\ben\label{estimate of I_1-1}
I_1\le (6480+180\pi)\pi e^{\frac{1}{e}}s\th^{-2-2s}.
\een
\textbf{Step 2}: \emph{$\th\in[\frac{\pi}{2},\pi]$. Estimate of $I_1$.}

We first show that 
\ben\label{aa2.34}
\frac{y}{\sin\th}\le \frac{\pi}{4}.
\een
Recalling \eqref{varphi-given-by-integral} and \eqref{def-of-g}, using $g(s,y,z)=1-z^{1/s}-y^2(z^2-z^{1/s})\le1$, one can derive
\ben\label{aa2.35}
\frac{y}{\vf_s(y)}=\frac{1}{\int_{0}^{1}\frac{1}{\sqrt{g}}\mathrm{d}z}\le \frac{1}{\int_{0}^{1}\frac{1}{1}\mathrm{d}z} \le1.
\een
From \eqref{aa2.30} and \eqref{aa2.35},
\ben\label{bound of y/sinth}
\frac{y}{\sin\th}=\frac{y}{\vf_s(y)}\frac{\vf_s(y)}{\sin\th}=\frac{y}{\vf_s(y)}\frac{\vf}{\sin\th}=\frac{1}{2}\frac{y}{\vf_s(y)}\frac{2\vf}{\sin2\vf}\le \frac{1}{2}\times1\times\frac{\pi}{2}=\frac{\pi}{4}, \quad \th\in[\frac{\pi}{2},\pi].
\een
Moreover, we can also get the uniform upper bound of $(1-y^2)^{-1}$ in this case. Recalling \eqref{varphi-given-by-integral},
$$\vf_s(y)=y\int_{0}^{1}\frac{1}{\sqrt{g(s,y,z)}}\mathrm{d}z,\quad g(s,y,z)=1-y^2z^2-(1-y^2)z^{1/s}.$$
Obviously, for $0<s_1<s_2<1$, we have
$$g(s_1,y,z)>g(s_2,y,z)\iff \vf_{s_1}(y)<\vf_{s_2}(y)\iff y_{s_2}(\vf)<y_{s_1}(\vf).$$
By the monotonicity, $y_s(\vf)\le y_s(\frac{\pi}{4})\le y_0(\frac{\pi}{4})=\sin \frac{\pi}{4}=\frac{\sqrt{2}}{2}.$
Therefore,
\ben\label{aa2.37}
(1-y^2)^{-1}\le(1-y^2)^{-1}|_{y=\frac{\sqrt{2}}{2}}= 2,\quad \th\in[\frac{\pi}{2},\pi].
\een
Using \eqref{aa2.37} and \eqref{bound of y/sinth}, we have
\ben\label{aa2.38}
\frac{\beta_s(y)A_s(y)}{\sin\th}=2s\frac{y}{\sin\th}(1-y^2)^{-1-2s}\le 2s\times\frac{\pi}{4}\times2^{1+2s}\le 4\pi s,
\een
and
\ben\label{aa2.39}
\Big|\frac{\beta_s(y)B^{s}(y)}{\sin\th}\Big|=\Big|(1-2s)\frac{y}{\sin\th}(1-y^2)^{-2s}\Big|\le\frac{\pi}{4}\times 2^{2s}\le \pi.
\een
From \eqref{upper bound of y'}, \eqref{aa2.29} and \eqref{aa2.38},
$$I_{11}=\frac{1}{2}(2^{4s}-1)\Big|\frac{\beta_s(y)A_s(y)y'_s(\varphi)}{\sin\th}\Big|\le \frac{1}{2}(2^{4s}-1)|y_s'| \times4\pi s\le 90\pi s^{2}.$$
From \eqref{upper bound of y'}, \eqref{aa2.29} and \eqref{aa2.39},
$$I_{12}=\frac{1}{2}(2^{4s}-1)\Big|\frac{\beta_s(y)B^{s}(y)y'_s(\varphi)}{\sin\th}\Big|\le \frac{1}{2}(2^{4s}-1)|y_s'|\times \pi\le 25\pi s.$$
Combine these, one can get the uniform convergence : $I_{1}\le 115\pi s.$ Since $\th\in[\frac{\pi}{2},\pi]$, we can rewrite 
\ben\label{estimate of I_1-2}
I_{1}\le 115\pi s\le 115\pi^{5} s\th^{-2-2s}.
\een

\textbf{Step 3}: \emph{$\th\in(0,\frac{\pi}{2})$. Estimate of $I_2$.}

$$I_2\le \Big|\frac{1}{2}\frac{\beta_s(y) A_s(y)y'_s}{\sin\th}\Big|+\Big|\frac{1}{2}\frac{\beta_s(y) A_s(y)\cos\vf}{\sin\th}\Big|+\frac{1}{2}\Big|\frac{\beta_s(y)B^{s}(y)}{\sin\th}\Big|\times|y'_s-\cos\vf|\eqqcolon I_{21}+I_{22}+I_{23}.$$
By \eqref{aa2.27} and \eqref{aa2.30}, one can derive
\ben\label{estimate-of-I_21}
I_{21}\le \frac{1}{2}\frac{\pi}{2}\th^{-1}\times576e^{\frac{1}{e}}s\th^{-1-2s} \le 144\pi e^{\frac{1}{e}}s\th^{-2-2s}.
\een
Since $2\vf+\th=\pi$,
\ben\label{aa2.42}
I_{22}=\Big|\frac{1}{2}\frac{\beta_s A_s(y)\cos\vf}{\sin\th}\Big|=|\frac{1}{2}\frac{\beta_s A_s(y)\sin\frac{\th}{2}}{\sin\th}|\le \frac{1}{4\cos\frac{\th}{2}}|\beta_sA_s(y)|.
\een
Using \eqref{function-beta-s-def}, \eqref{aa2.23} and $\cos\th\ge \frac{\sqrt{2}}{2}\ge \frac{1}{2}$,
\ben\label{aa2.43} 
I_{22}\le\frac{1}{4\cos\frac{\th}{2}} |\beta_sA_s(y)|\le \frac{1}{2}\times2s(1-y^2)^{-2s}=s(1-y^2)^{-2s}.
\een
By \eqref{estimate of 1-y}, we have 
\ben\label{aa2.44}
(1-y^2)^{-1}\le \pi^{2}\th^{-2}. 
\een
Using \eqref{aa2.26}, \eqref{aa2.43} and \eqref{aa2.44}, we derive
\ben\label{estimate-of-I_22}
I_{22}\le s(1-y^2)^{-1-2s}=(1-y^2)^{-1}\times (1-y^2)^{-2s}s\le s\pi^2\th^{-2}\times 16e^{\frac{1}{e}}\th^{-2s}=16s\pi^2e^{\frac{1}{e}}\th^{-2-2s}.
\een
For $I_{23}$, by \eqref{aa2.30} and \eqref{aa2.32}, we have
\ben\label{aa2.46}
\Big|\frac{\beta_sB^{s}(y)}{\sin\th}\Big|\le  8\pi e^{\frac{1}{e}}\th^{-1-2s}.
\een
By \eqref{aa2.46}, \eqref{convergence_rate_of_y'_s} and \eqref{estimate of 1-y 2}, we get
\beno
I_{23}=\frac{1}{2}\times\Big|\frac{\beta_sB^{s}(y)}{\sin\th}\Big|\times |y_s'-\cos\vf|\le 4\pi e^{\frac{1}{e}}\th^{-1-2s}\times 6s(1-y^2)^{-1}y_s'(\vf)=432s\pi e^{\frac{1}{e}}\th^{-2-2s}.
\eeno
Therefore, 
\ben\label{estimate of I_2-1}
I_2\le (576+16\pi)\pi e^{\frac{1}{e}}s\th^{-2-2s}.
\een

\textbf{Step 4}: \emph{$\th\in[\frac{\pi}{2},\pi]$. Estimate of $I_2$.}

Recalling \eqref{upper bound of y'} and \eqref{aa2.38}, we derive
$$I_{21}+I_{22}=\Big|\frac{1}{2}\frac{\beta_s A_s(y)y'_s}{\sin\th}\Big|+\Big|\frac{1}{2}\frac{\beta_s A_s(y)\cos\vf}{\sin\th}\Big|\le\Big|\frac{\beta_s(y)A_s(y)}{\sin\th}\Big|\le 4\pi  s.$$
By \eqref{aa2.39}, we have
$$I_{23}=\Big|\frac{\beta_sB^{s}(y)}{\sin\th}\Big|\times|y'_s-\cos\vf|\le \pi\times|y'_s-\cos\vf|.$$
Using \eqref{convergence_rate_of_y'_s} again, we have
$$I_{23}\le 6\pi s(1-y^2)^{-1}y_s'(\vf).$$
The results \eqref{aa2.37} and \eqref{upper bound of y'} give the uniform convergence
$$I_{23}\le 12\pi s.$$
Therefore,
\ben\label{estimate of I_2-2}
I_{2}\le 16\pi s \le 16\pi^5 s\th^{-2-2s},\quad \th\in[\frac{\pi}{2},\pi].
\een

\textbf{Step 5}: \emph{$\th\in(0,\pi]$. Estimate of $I_3$.}

Using $2\vf+\th=\pi$, one can derive
$$I_3=\left|\frac{1}{2}\frac{\beta'_s\beta_s \cos\varphi}{\sin\th}-\frac{1}{4}\right|=\frac{1}{2}\Big|\frac{\beta'_s\beta_s \cos\varphi}{\sin\th}-\frac{1}{2}\Big|=\frac{1}{2}\Big|\frac{\beta'_s\beta_s \cos\varphi}{2\sin\vf\cos\vf}-\frac{1}{2}\Big|=\frac{1}{4}\left|\frac{\beta'_s\beta_s}{\sin\varphi}-1\right|.$$
Therefore,
$$I_3\le \frac{1}{4}(I_{31}+I_{32}+I_{33}),$$
 where
$$I_{31}=\left|\frac{\beta_s \beta_s^{\prime}}{\sin \varphi}-\frac{y \beta_s^{\prime}}{\sin \varphi}\right|, \quad I_{32}=\left|\frac{y \beta_s^{\prime}}{\sin \varphi}-\frac{y}{\sin \varphi}\right|,\quad I_{33}=\left|\frac{y}{\sin \varphi}-1\right|.$$
Using \eqref{aa2.30} and \eqref{aa2.35}, we have
\ben\label{aa2.49}
\frac{y}{\sin\vf}=\frac{y}{\vf}\frac{\vf}{\sin\vf}\le \frac{\pi}{2}.
\een
Note that
$$I_{31}=\left|\frac{\beta_s\beta'_s}{\sin\varphi}-\frac{y\beta'_s}{\sin\varphi}\right|= \frac{|\beta'_s|}{\sin\vf}|\beta_s-y|.$$
Since $|\beta_s-y|=y|(1-y^2)^{-s}-1|=y(1-y^2)^{-s}|1-(1-y^2)^{s}|.$ Using the basic inequality \eqref{inequality} again, one can take $x=(1-y^2),\al=s$ to derive
$$|1-(1-y^2)^{s}|\le s\ln (1-y^2)^{-1}\le s(1-y^2)^{-1}.$$
Therefore,
\ben\label{aa2.50}
|\beta_s-y|=y(1-y^2)^{-s}|1-(1-y^2)^s|\le sy(1-y^2)^{-1-s}.
\een
From \eqref{aa2.49} and \eqref{aa2.50},
\ben\label{aa2.51}
I_{31}= \frac{|\beta'_s|}{\sin\vf}|\beta_s-y|\le |\beta'_s|\Big|\frac{y}{\sin\vf}s(1-y^2)^{-1-s}\le \frac{\pi}{2}|\beta'_s|s(1-y^2)^{-1-s}.
\een
Recalling the decomposition \eqref{aa2.23},
\ben\label{aa2.52}
s|\beta'_s|(1-y^2)^{-1-s}\le A_s(y)s(1-y^2)^{-1-s}+|B^{s}(y)|s(1-y^2)^{-1-s}.
\een
By \eqref{aa2.23},
\ben\label{aa2.53}
A_s(y)s(1-y^2)^{-1-s}=2s^{2}(1-y^2)^{-2-2s}.
\een
From \eqref{estimate of 1-y},
\ben\label{2.45}
(1-y^2)^{-2-2s}\le 4^{2+2s}(s^{-\frac{1}{2}}\th^{-1})^{2+2s}=4^{2+2s}s^{-1-s}\th^{-2-2s}\le 64e^{\frac{1}{e}}s^{-1}\th^{-2-2s}.
\een
Plugging \eqref{2.45} into \eqref{aa2.53}, we have
\ben\label{aa2.55}
A_s(y)s(1-y^2)^{-1-s}\le 64e^{\frac{1}{e}}s^{2}s^{-1}\th^{-2-2s}=64e^{\frac{1}{e}}s\th^{-2-2s}.
\een
From \eqref{aa2.23} and \eqref{estimate-of-I_22},
\ben\label{aa2.56}
|B^{s}(y)|s(1-y^2)^{-1-s}=|(1-2s)|s(1-y^2)^{-1-2s}\le s(1-y^2)^{-1-2s}\le 16\pi^2 e^{\frac{1}{e}}s\th^{-2-2s}.
\een
From \eqref{aa2.51}, \eqref{aa2.52}, \eqref{aa2.55} and \eqref{aa2.56}, one can derive
\ben\label{aa2.57}
I_{31}\le \frac{\pi}{2}\times(64+16\pi^2)e^{\frac{1}{e}}s\th^{-2-2s}=(32+8\pi^2)\pi e^{\frac{1}{e}}s\th^{-2-2s}.
\een
From \eqref{aa2.49},
\ben\label{aa2.58}
I_{32}=\left|\frac{y \beta_s^{\prime}}{\sin \varphi}-\frac{y}{\sin \varphi}\right|\le\frac{\pi}{2}|\beta_s'-1|.
\een
By \eqref{function-beta-s-derivative-def},
\ben\label{aa2.59}
|\beta_s'-1|=A_s(y)-2s+B^{s}(y)-(1-2s)=2s\Big((1-y^2)^{-1-s}-1\Big)+|1-2s|\Big((1-y^2)^{-s}-1\Big).\een
From \eqref{aa2.50},
\ben\label{aa2.60}
|1-2s|\Big((1-y^2)^{-s}-1\Big)\le\Big((1-y^2)^{-s}-1\Big)\le  s(1-y^2)^{-1-s}.
\een
Using \eqref{aa2.59} and \eqref{aa2.60},
\ben\label{aa2.61}
|\beta_s'-1|\le 2s\Big((1-y^2)^{-1-s}-1\Big)+s(1-y^2)^{-1-s}= 3s(1-y^2)^{-1-s}-2s\le3s(1-y^2)^{-1-s}.
\een
Thanks to \eqref{estimate of 1-y}, one can derive
\ben\label{aa2.62}
(1-y^2)^{-1-s}\le (1-y)^{-1-s}\le \pi^{2+2s}\th^{-2-2s}\le 64\th^{-2-2s}.
\een
Using \eqref{aa2.58}, \eqref{aa2.61} and \eqref{aa2.62}, we can finally derive 
\ben\label{aa2.63}
I_{32}\le \frac{\pi}{2}|\beta_s'-1|\le \frac{\pi}{2}\times 64 \times3s\th^{-2-2s}=96\pi s\th^{-2-2s}.
\een
It remains to estimate $I_{33}=\Big|\frac{y}{\sin\vf}-1\Big|$. Note that
$$I_{33}=\frac{1}{\sin\vf}|y-\sin\vf|.$$
From\eqref{convergence_rate_of_y_s},
$$I_{33}=\frac{1}{\sin\vf}|y-\sin\vf|\le \frac{1}{\sin\vf}2ys=\frac{4\cos\vf y}{\sin2\vf}s\le \frac{4y}{\sin\th}s. $$
Therefore, we can derive,
in the case of $\th\in(0,\frac{\pi}{2})$,
$$\frac{\th}{\sin\th}\le\frac{\pi}{2}\implies\frac{4y}{\sin\th}s\le 2\pi s\th^{-1}\le \frac{1}{4}\pi^{4}s\th^{-2-2s},$$
and in the case of $\th\in[\frac{\pi}{2},\pi]$, by \eqref{aa2.34},
$$\frac{y}{\sin\th}\le\frac{\pi}{4}\implies\frac{4y}{\sin\th}s\le\pi s\le \pi^5s\th^{-2-2s}.$$
Therefore,
\ben\label{aa2.64}
I_{33}\le \pi^5s\th^{-2-2s}.
\een
Using \eqref{aa2.57}, \eqref{aa2.63} and \eqref{aa2.64}, we have
\ben\label{estimate of I_3}
I_3\le 1200s\th^{-2-2s}.
\een

Combining the above results \eqref{estimate of I_1-1}, \eqref{estimate of I_1-2}, \eqref{estimate of I_2-1}, \eqref{estimate of I_1-2} and \eqref{estimate of I_3}, for any $\th\in(0,\pi],s\in(0,1)$, by checking
$$\max\{(3240+180\pi)\pi e^{1/e},115\pi^5\}+\max\{(576+16\pi)\pi e^{\frac{1}{e}},16\pi^5\}+1200\le 50000,$$ we finish the proof.
\end{proof}
This completes the proof of Theorem \ref{kernel-convergence}, which establishes the optimal convergence of the kernel. In the next section, we prove the optimal convergence of solutions.

\section{Estimate in \texorpdfstring{$L^1_k$}{L1k} norm}\label{sec 3}
In this section, we will obtain the local-in-time upper bound estimate of the error function $F^s=\frac{f^s-f^0}{s}$. We recall that the spatially homogeneous Boltzmann equation with hard potentials
admits a unique global solution in weighted $L^1$ spaces; see
Mischler--Wennberg~\cite{MischlerWennberg1999} and Villani~\cite{Villani1998NewClassWeakSol} for both cutoff and non-cutoff regimes.
 Throughout this section, we consider $0<s<\frac{1}{8}$. By Lemma 4.1 in \cite{He2012}, one has the following uniform estimates for the propagation of the moment.
\begin{lem}\label{lem3.1}
    Suppose f is a solution of Boltzmann equation \eqref{inverse-pl-s} or \eqref{hard-sphere} with hard potential kernel with initial data $f_{\textup{in}}\in L^1_2\cap L\log L$. Then for $k\ge 2,\ t\ge 0$, there exists constant $C(k,|f_{\textup{in}}|_{L^1_2},|f_{\textup{in}}|_{L\log L})$, such that
    \beno
|f|_{L^1_k}\le C(k,|f_{\textup{in}}|_{L^1_2},|f_{\textup{in}}|_{L\log L})|f_{\textup{in}}|_{L^1_k}.
    \eeno
\end{lem}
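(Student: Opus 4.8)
The plan is to deduce Lemma~\ref{lem3.1} from the classical propagation-of-moments theory for the spatially homogeneous Boltzmann equation with hard potentials, as packaged in \cite[Lemma~4.1]{He2012} (the Povzner technique going back to \cite{MischlerWennberg1999}), and to check that every constant appearing there is controlled uniformly for $0<s<\frac18$. First I would record that the kernels are uniformly of hard-potential type: by \eqref{1.3} one has $B^s=|v-v_*|^{1-4s}b_s(\theta)$, so for $0<s<\frac18$ the velocity exponent $\gamma_s=1-4s$ lies in the fixed compact set $[\frac12,1]\subset(0,1]$, while $B^0=\frac14|v-v_*|$ is the hard-sphere kernel with $\gamma=1$. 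The only $s$-dependent angular quantity that enters the moment estimate is the truncated second moment $\int_0^{\pi/2}\theta^{2}b_s(\theta)\,\mathrm{d}\theta$ — and this, rather than $\int b_s$, is the relevant norm because the Povzner angular average $\langle v'\rangle^k+\langle v'_*\rangle^k-\langle v\rangle^k-\langle v_*\rangle^k$ vanishes to second order as $\theta\to0$, so only $\theta^2 b_s\in L^1$ is needed even though $b_s$ itself is non-integrable. By Theorem~\ref{kernel-convergence}, $b_s(\theta)\le\frac14+50000\,s\,\theta^{-2-2s}$, hence
$$\int_0^{\pi/2}\theta^{2}b_s(\theta)\,\mathrm{d}\theta\le\frac14\int_0^{\pi/2}\theta^{2}\,\mathrm{d}\theta+50000\,s\int_0^{\pi/2}\theta^{-2s}\,\mathrm{d}\theta\le C,$$
uniformly in $s\in(0,\frac18)$, since $s<\frac12$ keeps $\theta^{-2s}$ integrable with $s\,(1-2s)^{-1}$ bounded.

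Next I would recall why the remaining constants are $s$-independent. Testing \eqref{inverse-pl-s} (resp. \eqref{hard-sphere}) against $\langle v\rangle^k$ and using the symmetric weak form of $Q^s$ together with a Povzner-type splitting of the above angular average into a coercive negative part and a lower-order positive remainder, one arrives at a differential inequality of the schematic form $\frac{\mathrm{d}}{\mathrm{d}t}m_k(t)\le C_1\bigl(1+m_k(t)\bigr)-C_2\,m_k(t)^{1+\gamma_s/k}$, where $m_p(t)=|f(t)|_{L^1_p}$ and I used H\"older's inequality $m_k\le m_0^{\gamma_s/(k+\gamma_s)}m_{k+\gamma_s}^{k/(k+\gamma_s)}$ together with conservation of mass $m_0$ and energy $m_2$. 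The coercivity constant $C_2>0$ needs a uniform lower bound on the collision frequency, which follows from conservation of mass and energy and the entropy bound $H(f(t))\le H(f_{\textup{in}})$; consequently $C_1$ and $C_2$ depend only on $k$, on the range $[\frac12,1]$ of $\gamma_s$, and on $|f_{\textup{in}}|_{L^1_2}$, $|f_{\textup{in}}|_{L\log L}$, never on $s$ directly. Integrating this scalar ODE and using $m_k(0)\ge m_0=|f_{\textup{in}}|_{L^1}$ to recast the resulting bound multiplicatively yields $\sup_{t\ge0}m_k(t)\le C(k,|f_{\textup{in}}|_{L^1_2},|f_{\textup{in}}|_{L\log L})\,m_k(0)$, which is the assertion, applied to both \eqref{inverse-pl-s} and \eqref{hard-sphere}.

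The main point here is bookkeeping rather than a new analytic difficulty: one must make sure that the angular constant $\int_0^{\pi/2}\theta^2 b_s$ does not degenerate as $s\to0$ — it does not, converging to $\frac14\int_0^{\pi/2}\theta^2\,\mathrm{d}\theta$ by Theorem~\ref{kernel-convergence} — and that the Povzner coercivity constant is bounded below uniformly while $\gamma_s$ ranges over a compact subinterval of $(0,1]$, which is standard. With these two uniformity checks in place, \cite[Lemma~4.1]{He2012} applies verbatim and delivers Lemma~\ref{lem3.1}.
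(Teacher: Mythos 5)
Your proposal follows essentially the same route as the paper: the paper's entire ``proof'' of Lemma~\ref{lem3.1} is the citation of \cite[Lemma~4.1]{He2012}, accompanied by the observation that the only $s$-dependent constant is an angular moment of $b_s$, which is uniformly bounded for $0<s<\frac18$ by Theorem~\ref{kernel-convergence} (this is exactly \eqref{theta-integral}). Your uniformity checks (compactness of the range of $\gamma_s$, the entropy/mass/energy lower bound for the collision frequency) match what the paper relies on.

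One point of your justification is backwards, though, and it is precisely the point the paper's remark after Lemma~\ref{lem3.1} is about. You say the Povzner average vanishes to second order after angular averaging, ``so only $\theta^2 b_s\in L^1$ is needed.'' The second-order cancellation is \eqref{order-2}, whose angular constant is $\int_{\mathbb{S}^2}b_s\sin^2\frac{\th}{2}\,\mathrm{d}\sigma\sim\int\theta^3 b_s\,\mathrm{d}\theta$, and by \cite{Desvillettes2009-ky} it is only available for $k\ge4$ --- this is exactly why He's Lemma~4.1 assumes $k\ge4$. For the claimed range $k\ge2$ one must instead use the first-order estimate \eqref{order-1}, whose angular constant is $\int_{\mathbb{S}^2}b_s\sin\frac{\th}{2}\,\mathrm{d}\sigma\sim\int_0^{\pi/2}\theta^2 b_s(\theta)\,\mathrm{d}\theta$; this is the integral you actually compute, and it is finite uniformly in $s$ only because the singularity $\theta^{-2-2s}$ is mild for $s<\frac18$ (for $s$ near $1$ it would diverge, which is why He needed the second-order route). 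So your final bound is the correct one, but you should invoke \eqref{order-1} rather than the second-order cancellation; as written, the stated mechanism would force $k\ge4$ and fail to justify the lemma for $2\le k<4$.
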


We remark that $k\ge 4$ is asked in Lemma 4.1  in \cite{He2012} by using a key estimate given by Lemma 1 in \cite{Desvillettes2009-ky} stating 
\ben \label{order-1}
	|\int_{\mathbb{S}^2}(\langle v'\rangle^k-\langle v\rangle^k)b(\th)\mathrm{d}\sigma|\le C_k \int_{\mathbb{S}^2}b(\th)\sin\frac{\th}{2}\mathrm{d}\sigma|v-v_*|[\langle v\rangle^{k-1}+\langle v_*\rangle^{k-1}],\quad  k \ge 2,
	\\ \label{order-2}
	|\int_{\mathbb{S}^2}(\langle v'\rangle^k-\langle v\rangle^k)b(\th)\mathrm{d}\sigma|\le C_k \int_{\mathbb{S}^2}b(\th)\sin^2\frac{\th}{2}\mathrm{d}\sigma|v-v_*|^2[\langle v\rangle^{k-2}+\langle v_*\rangle^{k-2}],\quad k \ge 4.
\een
In order to consider the full range $0<s<1$ and cancel two-order singularity, He \cite{He2012} used 
\eqref{order-2} and so $k\ge 4$ is required. But we only consider small $s$ in the range $0<s<\frac{1}{8}$ and so one-order $\theta$ is enough, see \eqref{theta-integral} for more details. 
That's why we allow $k \geq 2$ in Lemma \ref{lem3.1}. 

Motivated by Lemma 2.7 in \cite{CAO2022109641}, we have the following lemma.
\begin{lem}\label{lemma3.2}
    For any constant $k\ge2$, we have
    \beq\label{3.1}
    \begin{aligned}
&\langle v'\rangle^k-\cos^k\frac{\th}{2}\langle v\rangle^k-\sin^k\frac{\th}{2}\langle v_*\rangle^k\le C_k(\cos^{k-1}\frac{\th}{2}\langle v\rangle^{k-1}\sin\frac{\th}{2}\langle v_*\rangle+\cos\frac{\th}{2}\langle v\rangle\sin^{k-1}\frac{\th}{2}\langle v_*\rangle^{k-1}),\\
&\langle v'_*\rangle^k-\sin^k\frac{\th}{2}\langle v\rangle^k-\cos^k\frac{\th}{2}\langle v_*\rangle^k\le C_k(\cos^{k-1}\frac{\th}{2}\langle v_*\rangle^{k-1}\sin\frac{\th}{2}\langle v\rangle+\cos\frac{\th}{2}\langle v_*\rangle\sin^{k-1}\frac{\th}{2}\langle v\rangle^{k-1}).
\end{aligned}
    \eeq
\end{lem}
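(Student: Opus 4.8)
The plan is to reduce Lemma~\ref{lemma3.2} to two ingredients: a sharp pointwise comparison of $\langle v'\rangle$ (resp. $\langle v'_*\rangle$) with $\cos\tfrac\th2\langle v\rangle$ and $\sin\tfrac\th2\langle v_*\rangle$, followed by an elementary power-sum inequality. Concretely, I would first establish the geometric estimates
\[
\langle v'\rangle \le \cos\tfrac\th2\langle v\rangle + \sin\tfrac\th2\langle v_*\rangle,
\qquad
\langle v'_*\rangle \le \sin\tfrac\th2\langle v\rangle + \cos\tfrac\th2\langle v_*\rangle,
\]
and then raise these to the $k$-th power (recall $\langle\cdot\rangle\ge1>0$, so real powers make sense), expanding the right-hand sides so that every cross contribution is absorbed into the two terms appearing in \eqref{3.1}.

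For the geometric estimate, start from $v'=\frac{v+v_*}{2}+\frac{|v-v_*|}{2}\sigma$ and expand $|v'|^2$. Using the parallelogram law $|v+v_*|^2+|v-v_*|^2=2|v|^2+2|v_*|^2$ and $(v+v_*)\cdot(v-v_*)=|v|^2-|v_*|^2$, together with $\frac{1\pm\cos\th}{2}=\cos^2\tfrac\th2$ or $\sin^2\tfrac\th2$, one is led to the exact identity
\[
\langle v'\rangle^2 = \cos^2\tfrac\th2\,\langle v\rangle^2 + \sin^2\tfrac\th2\,\langle v_*\rangle^2 + \frac{|v-v_*|\sin\th}{2}\,(v+v_*)\cdot w,
\]
where $w$ is the unit vector, in the plane spanned by $\frac{v-v_*}{|v-v_*|}$ and $\sigma$, that is orthogonal to $v-v_*$ (if $\sigma=\pm\tfrac{v-v_*}{|v-v_*|}$ the cross term vanishes anyway). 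Since $w\perp(v-v_*)$, the Lagrange identity gives $|(v+v_*)\cdot w|\le \frac{2|v\wedge v_*|}{|v-v_*|}\le\frac{2|v||v_*|}{|v-v_*|}$, so the cross term is at most $|v||v_*|\sin\th=2\sin\tfrac\th2\cos\tfrac\th2\,|v||v_*|\le 2\sin\tfrac\th2\cos\tfrac\th2\,\langle v\rangle\langle v_*\rangle$. The right-hand side then becomes the perfect square $\big(\cos\tfrac\th2\langle v\rangle+\sin\tfrac\th2\langle v_*\rangle\big)^2$, which yields the first estimate; the second follows from the identical computation with $\sigma$ replaced by $-\sigma$ (which interchanges $\cos\tfrac\th2$ and $\sin\tfrac\th2$), or from energy conservation $\langle v'\rangle^2+\langle v'_*\rangle^2=\langle v\rangle^2+\langle v_*\rangle^2$ combined with the exact identity just obtained.

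The second ingredient is: for all $X,Y\ge0$ and real $k\ge2$ there is $C_k>0$ with $(X+Y)^k\le X^k+Y^k+C_k\big(X^{k-1}Y+XY^{k-1}\big)$. This follows by homogeneity: for $X>0$ put $t=Y/X\ge0$ and note that $\phi(t)=\frac{(1+t)^k-1-t^k}{t+t^{k-1}}$ is continuous and positive on $(0,\infty)$ with $\phi(0^+)=\phi(+\infty)=k$, hence bounded, so one may take $C_k=\sup_{t>0}\phi(t)$ (the cases $X=0$ or $Y=0$ being trivial). Applying this with $X=\cos\tfrac\th2\langle v\rangle$, $Y=\sin\tfrac\th2\langle v_*\rangle$ to the $k$-th power of the first geometric estimate gives exactly the first line of \eqref{3.1}; taking instead $X=\cos\tfrac\th2\langle v_*\rangle$, $Y=\sin\tfrac\th2\langle v\rangle$ in the $k$-th power of the second estimate gives the second line. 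The only genuinely delicate point is the sharp control of the cross term $(v+v_*)\cdot w$: bounding it crudely by $|v+v_*|$ would only produce a cross term of order $(\langle v\rangle^2+\langle v_*\rangle^2)\sin\tfrac\th2\cos\tfrac\th2$, which is not a perfect square and does not close the argument — it is the orthogonality $w\perp(v-v_*)$, forcing $|v\wedge v_*|\le|v||v_*|$, that collapses the right-hand side to $\big(\cos\tfrac\th2\langle v\rangle+\sin\tfrac\th2\langle v_*\rangle\big)^2$ and hence gives the clean multiplicative comparison; everything after that is routine.
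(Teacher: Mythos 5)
Your argument is correct, and it reaches \eqref{3.1} by a genuinely different mechanism than the paper's proof, although both start from the same exact expansion
$\langle v'\rangle^2=\cos^2\frac{\th}{2}\langle v\rangle^2+\sin^2\frac{\th}{2}\langle v_*\rangle^2+R$ with $|R|\le\sin\th\,|v|\,|v_*|$. The paper never forms the perfect square: it keeps the $k$-th power intact and Taylor-expands in auxiliary interpolation parameters, writing $\langle v'\rangle^k-\cos^k\frac{\th}{2}\langle v\rangle^k=F(1)-F(0)$ for $2\le k\le4$ and $F(1,1)-F(1,0)-F(0,1)+F(0,0)=\int_0^1\!\!\int_0^1\partial_s\partial_t F$ for $k>4$, which forces a case split at $k=4$ and some bookkeeping of the mixed derivative. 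You instead collapse the quadratic form to the clean triangle-type bound $\langle v'\rangle\le\cos\frac{\th}{2}\langle v\rangle+\sin\frac{\th}{2}\langle v_*\rangle$ (the key point, which you correctly identify, being that $w\perp(v-v_*)$ forces $|R|\le 2\sin\frac{\th}{2}\cos\frac{\th}{2}\,|v|\,|v_*|$ rather than a non-factorizable cross term) and then invoke the scalar inequality $(X+Y)^k\le X^k+Y^k+C_k(X^{k-1}Y+XY^{k-1})$, proved by homogeneity and boundedness of $\phi$ on $(0,\infty)$. This treats all real $k\ge2$ uniformly with no case distinction and separates the geometry from the elementary analysis, at the price of proving a slightly stronger intermediate statement than the lemma strictly requires. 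The only blemish is the side remark $\phi(0^+)=\phi(+\infty)=k$: for $k=2$ one has $\phi\equiv1$, not $2$; but since all you use is $\sup_{t>0}\phi(t)<\infty$, which follows from continuity together with the existence of finite limits at $0$ and $\infty$, this is immaterial.
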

\begin{proof}[Proof of lemma \ref{lemma3.2}]
    Setting $\mathbf{k}=\frac{v-v_*}{|v-v_*|},\mathbf{y}=\frac{v \times v_*}{\left|v \times v_*\right|},\mathbf{x}=\mathbf{y} \times \mathbf{k},$ and $\cos\b=\frac{(v,v_*)}{|v||v_*|},\sigma=\mathbf{k} \cos \theta+\mathbf{x} \sin \theta \cos \varphi+\mathbf{y} \sin \theta \sin \varphi\in \mathbb{S}^2$, $\th\in[0,\pi]$, $\vf\in[0,2\pi]$.
    We will use the following representation of $|v'|$ and $|v'_*|$, which can be proved directly.
    \beno
|v'|^2=\cos^2\frac{\th}{2}|v|^2+\sin^2\frac{\th}{2}|v_*|^2+\sin\th\sin\beta\cos\vf|v||v_*|,\\
|v'_*|^2=\sin^2\frac{\th}{2}|v|^2+\cos^2\frac{\th}{2}|v_*|^2-\sin\th\sin\beta\cos\vf|v||v_*|.
    \eeno
    Thanks to the symmetric structure, we only prove the inequality involving $v'$.
    For the case $2\le k\le 4$, letting 
    $$
    F(t)=(\cos^2\frac{\th}{2}\langle v\rangle^2+tP)^{\frac{k}{2}},\quad P=\sin^2\frac{\th}{2}\langle v_*\rangle^{2}+\sin\th\sin\beta\cos\vf|v||v_*|.$$
    Since $0\le \frac{k}{2}-1\le1$,
    $$
    \begin{aligned}
    &\langle v'\rangle^k-\cos^k\frac{\th}{2}\langle v\rangle^k=F(1)-F(0)\\
    =&\frac{k}{2}P\int_0^1(\cos^2\frac{\th}{2}\langle v\rangle^2+tP)^{\frac{k}{2}-1}\mathrm{d}t\\
    \le & \frac{k}{2}P\int_0^1 \cos^{k-2}\frac{\th}{2}\langle v\rangle^{k-2}+t^{\frac{k}{2}-1}P^{\frac{k}{2}-1}\mathrm{d}t\\
    \le & \frac{k}{2}P\cos^{k-2}\frac{\th}{2}\langle v\rangle^{k-2}+P(\sin^{k-2}\frac{\th}{2}\langle v_*\rangle^{k-2}+(\sin\th\sin\beta\cos\vf|v||v_*|)^{k-2})\\
    \le & \sin^{k}\frac{\th}{2}\langle v_*\rangle^{k}+C_k(\cos^{k-1}\frac{\th}{2}\langle v\rangle^{k-1}\sin\frac{\th}{2}\langle v_*\rangle+\cos\frac{\th}{2}\langle v\rangle\sin^{k-1}\frac{\th}{2}\langle v_*\rangle^{k-1}).
    \end{aligned}
    $$
   Thus the case $2\le k\le 4$ is proved.
    
    For the case $k>4$, letting
    $$F(s,t)=(s\cos^2\frac{\th}{2}\langle v\rangle^2+t\sin^2\frac{\th}{2}\langle v_*\rangle^{2}+st\sin\th\sin\beta\cos\vf|v||v_*|)^{\frac{k}{2}}.$$
    Note that
    $$
    \langle v'\rangle^k-\cos^k\frac{\th}{2}\langle v\rangle^k-\sin^k\frac{\th}{2}\langle v_*\rangle=F(1,1)-F(1,0)-F(0,1)+F(0,0)
    =\int_0^1\int_0^1 \frac{\pa^2F(s,t)}{\pa s\pa t}\mathrm{d}s\mathrm{d}t.$$
    $$
    \begin{aligned}
    &\frac{\pa^2F(s,t)}{\pa s\pa t}=\frac{k}{2}(\frac{k}{2}-1)(s\cos^2\frac{\th}{2}\langle v\rangle^2+t\sin^2\frac{\th}{2}\langle v_*\rangle^{2}+st\sin\th\sin\beta\cos\vf|v||v_*|)^{\frac{k}{2}-2}\times\\
    & (\sin^2\frac{\th}{2}\langle v_*\rangle^2+s\sin\th\sin\beta\cos\varphi|v||v_*|)+\frac{k}{2}(s\cos^2\frac{\th}{2}\langle v\rangle^2+t\sin^2\frac{\th}{2}\langle v_*\rangle^{2}+s\sin\th\sin\beta\cos\vf|v||v_*|)^{\frac{k}{2}-1}\times\\
    &(\sin\th\sin\beta\cos\vf|v||v_*|).
    \end{aligned}
    $$
    Since $0< \frac{k}{2}-2$, there exists a constant $C_k$ such that
    $$(a+b)^{\frac{k}{2}-2}\le C_k (a^{\frac{k}{2}-2}+b^{\frac{k}{2}-2}).$$
    One has
    \beno
     \langle v'\rangle^k-\cos^k\frac{\th}{2}\langle v\rangle^k-\sin^k\frac{\th}{2}\langle v_*\rangle\le \int_0^1\int_0^1 \Big|\frac{\pa^2F(s,t)}{\pa s\pa t}\Big|\mathrm{d}s\mathrm{d}t \\ \le C_k(\cos^{k-1}\frac{\th}{2}\langle v\rangle^{k-1}\sin\frac{\th}{2}\langle v_*\rangle+\cos\frac{\th}{2}\langle v\rangle\sin^{k-1}\frac{\th}{2}\langle v_*\rangle^{k-1}),
    \eeno
   which ends the proof.
\end{proof}
In order to prove Theorem \ref{thm1.2}, we first prove the estimate for the propagation of $W^{1,1}_{k}$ norms.
\begin{lem}\label{lemma3.3}
Let $f^0$ be the solution of \eqref{hard-sphere} with initial data $f_{\textup{in}}\in W^{1,1}_2\cap L^1_{3}\cap L\log L$. 
    There exists  a constant $C(|f_{\textup{in}}|_{L^1_2},|f_{\textup{in}}|_{L\log L})>0$ such that
    \ben\label{3.3}
|f^0|_{W^{1,1}_2}\le |f_{\textup{in}}|_{W^{1,1}_2}e^{C(|f_{\textup{in}}|_{L^1_2},|f_{\textup{in}}|_{L\log L})|f_{\textup{in}}|_{L^1_3} t},\quad t\ge0.
    \een
    Moreover, let $2< k\in\mathbb{R}$, suppose $f_{\textup{in}}\in W^{1,1}_{k}\cap L^1_{k+1}\cap L\log L$,
    for any $T>0$, there exists a constant $C$, depending on $k,T,|f_{\textup{in}}|_{L^1_{k+1}},|f_{\textup{in}}|_{W^{1,1}_{k}},|f_{\textup{in}}|_{L\log L}$ such that
    \beno
|f^0|_{W^{1,1}_{k}}\le C(k,T,|f_{\textup{in}}|_{L^1_{k+1}},|f_{\textup{in}}|_{W^{1,1}_{k}},|f_{\textup{in}}|_{L\log L}),\quad 0\le t\le T.
    \eeno
\end{lem}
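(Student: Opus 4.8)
The plan is to differentiate the hard-sphere equation in $v$, propagate weighted $L^1$ norms of the first derivatives, and close the estimate by Gr\"onwall's lemma using the Povzner-type inequality of Lemma~\ref{lemma3.2} together with the moment bound of Lemma~\ref{lem3.1}. The starting point is the identity
$$
\partial_i Q^0(g,f)=Q^0(\partial_i g,f)+Q^0(g,\partial_i f),
$$
which holds because $B^0(v-v_*,\sigma)$ depends on $(v,v_*)$ only through $v-v_*$: writing $Q^0$ in the variables $u=v-v_*$ and $\sigma$, the kernel becomes $v$-independent and $v',v'_*$ depend on $v$ only through an additive shift, so $\partial_{v_i}$ falls only on the arguments of $f$. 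Hence $g_i\colonequals\partial_i f^0$ solves $\partial_t g_i=Q^0(g_i,f^0)+Q^0(f^0,g_i)$.

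Next I would estimate $\frac{d}{dt}|g_i|_{L^1_k}$ by multiplying this equation by $\mathrm{sgn}(g_i)\langle v\rangle^k$ and integrating (justified in the usual way by mollifying $|\cdot|$ and by the approximation scheme used in the existence theory). Since $f^0\ge0$, one has $Q^+(g_i,f^0)\,\mathrm{sgn}(g_i)\le Q^+(|g_i|,f^0)$ and $Q^+(f^0,g_i)\,\mathrm{sgn}(g_i)\le Q^+(f^0,|g_i|)$; the loss term in $Q^0(f^0,g_i)$ contributes the genuinely dissipative quantity $-\int|g_i|(\Psi*f^0)\langle v\rangle^k\le0$ (where $\Psi(w)=\int_{\mathbb{S}^2}B^0(w,\sigma)\,\mathrm{d}\sigma$), which may simply be dropped, while the loss term in $Q^0(g_i,f^0)$ is bounded by $\int f^0(\Psi*|g_i|)\langle v\rangle^k$. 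Passing to the pre-post-collisional weak form and symmetrizing in $v\leftrightarrow v_*$ collapses everything to
$$
\frac{d}{dt}\int_{\mathbb{R}^3}|g_i|\langle v\rangle^k\,\mathrm{d}v\ \le\ \int_{\mathbb{R}^3\times\mathbb{R}^3\times\mathbb{S}^2} B^0(v-v_*,\sigma)\,|g_i(v)|\,f^0(v_*)\,\big[\langle v'\rangle^k+\langle v'_*\rangle^k+\langle v_*\rangle^k-\langle v\rangle^k\big]\,\mathrm{d}\sigma\,\mathrm{d}v_*\,\mathrm{d}v .
$$
Adding the two inequalities of Lemma~\ref{lemma3.2} yields $\langle v'\rangle^k+\langle v'_*\rangle^k\le(\cos^k\tfrac\theta2+\sin^k\tfrac\theta2)(\langle v\rangle^k+\langle v_*\rangle^k)+C_k(\langle v\rangle^{k-1}\langle v_*\rangle+\langle v\rangle\langle v_*\rangle^{k-1})$ up to bounded angular factors; since $\cos^k\tfrac\theta2+\sin^k\tfrac\theta2\le1$ for $k\ge2$, the coefficient of $\langle v\rangle^k$ in the bracket is nonpositive and that contribution is discarded. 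Integrating the constant angular kernel over $\mathbb{S}^2$, using $|v-v_*|\le2\langle v\rangle\langle v_*\rangle$, and arranging the surviving weights so that the $\langle v_*\rangle$-powers land on $f^0$, one obtains
$$
\frac{d}{dt}|g_i|_{L^1_k}\ \le\ C_k\,\big(|f^0|_{L^1_{k+1}}+|f^0|_{L^1_k}+|f^0|_{L^1_2}\big)\,|g_i|_{L^1_k}.
$$
By Lemma~\ref{lem3.1}, $|f^0(t)|_{L^1_{k+1}}$ (hence also the lower moments) is bounded uniformly in $t$ by a constant depending on $k,|f_{\textup{in}}|_{L^1_{k+1}},|f_{\textup{in}}|_{L\log L}$, so Gr\"onwall gives $|g_i(t)|_{L^1_k}\le|\partial_i f_{\textup{in}}|_{L^1_k}e^{Ct}$. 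Summing over $i=1,2,3$ and adding $|f^0(t)|_{L^1_k}$ --- bounded by Lemma~\ref{lem3.1}, and for $k=2$ equal to $|f_{\textup{in}}|_{L^1_2}$ by conservation of mass and energy --- gives \eqref{3.3} (for $k=2$, note $|f_{\textup{in}}|_{L^1_2}+(\sum_i|\partial_i f_{\textup{in}}|_{L^1_2})e^{Ct}\le|f_{\textup{in}}|_{W^{1,1}_2}e^{Ct}$, with $C\sim C(|f_{\textup{in}}|_{L^1_2},|f_{\textup{in}}|_{L\log L})|f_{\textup{in}}|_{L^1_3}$ since $\sup_{t}|f^0(t)|_{L^1_3}\lesssim|f_{\textup{in}}|_{L^1_3}$) and, for $2<k$ and $t\in[0,T]$, the asserted bound.

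The main obstacle is the top-order moment: a crude estimate of the gain term produces a term $+C|g_i|_{L^1_{k+1}}$ on the right, which for large initial data cannot be absorbed by the dissipative loss term, whose coefficient need not dominate. The role of the \emph{sharp} Povzner inequality of Lemma~\ref{lemma3.2} is precisely to show that the leading $\langle v\rangle^k$-part of $\langle v'\rangle^k+\langle v'_*\rangle^k$ is already $\le\langle v\rangle^k+\langle v_*\rangle^k$ before any cancellation against the loss term, so that the net contribution at order $\langle v\rangle^k$ is nonpositive and $|g_i|_{L^1_{k+1}}$ never appears with a bad sign; the residual is then linear in $|g_i|_{L^1_k}$ and the inequality closes by Gr\"onwall. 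What remains is bookkeeping: checking that after the angular integration and the bound $|v-v_*|\le2\langle v\rangle\langle v_*\rangle$ the mixed Povzner terms cost only one extra $\langle v_*\rangle$-power on $f^0$ --- so that $|f^0|_{L^1_{k+1}}$, exactly the moment assumed on $f_{\textup{in}}$ (with $f_{\textup{in}}\in L^1_3$ the relevant case when $k=2$), suffices --- and carrying out the standard regularization that legitimizes differentiating $\int|g_i|\langle v\rangle^k$ in time.
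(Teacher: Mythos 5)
Your proof is correct and follows the same skeleton as the paper's (differentiate the equation using translation invariance of $B^0$, test with $\textup{sgn}(\pa^\al f^0)\langle v\rangle^k$, invoke the sharp Povzner inequality of Lemma~\ref{lemma3.2}, close by Gr\"onwall with the moment bound of Lemma~\ref{lem3.1}), but it diverges at one genuinely different step: the treatment of the top-order moment. The paper keeps the nonpositive term $A_1=(\cos^k\frac{\th}{2}+\sin^k\frac{\th}{2}-1)\langle v\rangle^k$ from \eqref{3.15} and converts it into a true dissipation $-C_1(k)C_2|\pa^\al f^0|_{L^1_{k+1}}$ in \eqref{3.24} via the collision-frequency lower bound $\nu(v)\ge C_2\langle v\rangle$ of \eqref{3.23} (which is where the $L\log L$ hypothesis enters directly); it then applies Young's inequality to $A_2,A_3$ so as to place $\e\langle v\rangle^{k+1}$ on the derivative and absorbs that with the dissipation, leading to the ODE \eqref{3.27}--\eqref{3.28}. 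You instead discard the nonpositive term outright and distribute the factor $|v-v_*|\le\langle v\rangle+\langle v_*\rangle$ across the surviving Povzner remainders so that at most $k$ powers of $\langle v\rangle$ ever fall on $\pa^\al f^0$ and at most $k+1$ powers of $\langle v_*\rangle$ fall on $f^0$; a direct check of the three surviving terms confirms each is bounded by $C_k|\pa^\al f^0|_{L^1_k}\,|f^0|_{L^1_{k+1}}$, so Gr\"onwall closes with no coercivity needed and no $|\pa^\al f^0|_{L^1_{k+1}}$ ever appearing with a bad sign. Your route is shorter, uses $L\log L$ only through Lemma~\ref{lem3.1}, and gives a clean exponential bound for all $k\ge2$ (specializing to exact energy conservation at $k=2$ to recover \eqref{3.3} with the $|f_{\textup{in}}|_{L^1_3}$ rate, as you note); the paper's route is heavier but additionally extracts the moment-gain structure visible in the decaying factor of \eqref{3.28}. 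Both are valid proofs of the stated lemma.
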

\begin{proof}[Proof of Lemma \ref{lemma3.3}]
    By \eqref{hard-sphere}, one has for any index $|\al|=1$,
    \beno
\pa_{t}\pa^{\al}f^0=Q^0(f^0,\pa^{\al}f^0)+Q^0(\pa^{\al}f^0,f^0).
    \eeno
    Recalling the  collision operator $Q^0$, 
    $$Q^0(g,f)(v)=\int_{\mathbb{R}^3\times \mathbb{S}^2}B^0(g'_*f'-g_*f)\mathrm{d}v_*\mathrm{d}\sigma.$$
    Therefore, for any smooth test function $\vf(v)$, one has
    \beq\label{3.6}
\begin{aligned}
    \langle Q^0(g,f)+Q^0(f,g),\vf\rangle=&\int B^0(g'_*f'-g_*f)\vf\mathrm{d}V+\int B^0(g'f'_*-gf_*)\vf\mathrm{d}V\\
    = & \int B^0 g_*f(\vf_*'+\vf'-\vf_*-\vf)\mathrm{d}V.
\end{aligned}
    \eeq
    Multiplying the equation \eqref{hard-sphere} with $\textup{\textup{sgn}}(\pa^{\al}f^0)\langle v\rangle^2$, integrating over $\mathbb{R}^3$, one has
    \beq\label{3.7}
    \begin{aligned}
        \frac{\mathrm{d}}{\mathrm{d}t}|\pa^{\al}f^0|_{L^1_2}=\langle Q^0(\pa^{\al}f^{0},f^0\rangle+Q^0(f^0,\pa^{\al}f^{0}),\textup{\textup{sgn}}(\pa^{\al}f^0)\langle v\rangle^2\rangle.
    \end{aligned}
    \eeq
    Note that
    \beq\label{3.8}
    \begin{aligned}
    &\langle Q^0(\pa^{\al}f^{0},f^0\rangle+Q^0(f^0,\pa^{\al}f^{0}),\textup{\textup{sgn}}(\pa^{\al}f^0)\langle v\rangle^2\rangle\\
    &\text{taking $g= f^0,f=\pa^\al f^0,\vf=\textup{\textup{sgn}}(\pa^\al f^0)\langle v\rangle^2$ in \eqref{3.6}}\\
    =&\int B^0f^0_*\pa^\al f^0(\textup{\textup{sgn}}(\pa^\al f^0)'_*\langle v'_*\rangle^2+\textup{sgn}(\pa^\al f^0)'\langle v'\rangle^2-\textup{sgn}(\pa^\al f^0)_*\langle v_*\rangle^2-\textup{sgn}(\pa^\al f^0)\langle v\rangle^2)\mathrm{d}V\\
    \le& \int B^0|\pa^\al f^0|f^0_*(\langle v'_*\rangle^2+\langle v'\rangle^2+\langle v_*\rangle^2-\langle v\rangle^2)\mathrm{d}V\\
    =& \int 2B^0|\pa^{\al}f^{0}|f^0_*\langle v_*\rangle^2\mathrm{d}V=2\pi\int_{\mathbb{R}^3\times\mathbb{R}^3}|v-v_*|f^0_*|\pa^{\al}f^0|\langle v_*\rangle^2\mathrm{d}v\mathrm{d}v_*.
    \end{aligned}
    \eeq
    By \eqref{3.7} and \eqref{3.8},
    \begin{equation}\label{3.9}
    \begin{aligned}
    \frac{\mathrm{d}}{\mathrm{d}t}|\pa^{\al}f^0|_{L^1_2}&\le 2\pi \int_{\mathbb{R}^3\times\mathbb{R}^3}|v-v_*||\pa^{\al}f^{0}|f^0_*\langle v_*\rangle^2\mathrm{d}v\mathrm{d}v_*\\
    &\le 2\pi\Big(|\pa^{\al}f^{0}|_{L^1_1}|f^0|_{L^1_2}+|\pa^{\al}f^{0}|_{L^1}|f^0|_{L^1_3} \Big)\\
    &\le 4\pi |\pa^{\al}f^{0}|_{L^1_2}|f^0|_{L^1_3}.
    \end{aligned}
    \end{equation}
    By Lemma \ref{lem3.1}, there exists a constant $C$ depending on $|f_{\textup{in}}|_{L^1_2},|f_{\textup{in}}|_{L\log L}$, such that for $t\ge0$,
    \ben\label{3.10}
|f^0|_{L^1_3}\le C(|f_{\textup{in}}|_{L^1_2},|f_{\textup{in}}|_{L\log L})|f_{\textup{in}}|_{L^1_3}.
    \een
    Combining \eqref{3.7}, \eqref{3.9} and \eqref{3.10}, we get
    \beno
\frac{\mathrm{d}}{\mathrm{d}t}|\pa^{\al}f^{0}|_{L^1_2}\le C(|f_{\textup{in}}|_{L^1_2},|f_{\textup{in}}|_{L\log L})|\pa^{\al}f^{0}|_{L^1_2}|f_{\textup{in}}|_{L^1_3}.
    \eeno
    Thanks to Gronwall's inequality, we derive for $t\ge 0$,
    \ben\label{3.12}
|\pa^{\al}f^{0}|_{L^1_2}\le |\pa^{\al}f_{\textup{in}}|_{L^1_2}e^{C(|f_{\textup{in}}|_{L^1_2},|f_{\textup{in}}|_{L\log L})|f_{\textup{in}}|_{L^1_3}t}.
    \een
    From \eqref{3.12}, 
    \beno
|f^0|_{W^{1,1}_2}= |f^0|_{L^1_2}+ \sum\limits_{|\al|=1}|\pa^{\al}f^{0}|_{L^1_2}\le |f_{\textup{in}}|_{W^{1,1}_2}e^{C(|f_{\textup{in}}|_{L^1_2},|f_{\textup{in}}|_{L\log L})|f_{\textup{in}}|_{L^1_3}t}.
    \eeno
    This completes the proof of \eqref{3.3}.\\ 
    Taking place $\textup{sgn}(\pa^{\al}f^0)\langle v\rangle^2$ with $\textup{sgn}(\pa^{\al}f^0)\langle v\rangle^{k}(2< k)$ and repeating similar argument, one has
    \ben\label{3.14}
    \frac{\mathrm{d}}{\mathrm{d}t}|\pa ^{\al}f^0|_{L^1_{k}}\le\int B^0|\pa^{\al}f^{0}|f^0_*(\langle v'_*\rangle^{k}+\langle v'\rangle^{k}+\langle v_*\rangle^{k}-\langle v\rangle^{k})\mathrm{d}V.
    \een

    By \eqref{3.1}, it gives
    \beq\label{3.15}
    \begin{aligned}
        &\langle v_*'\rangle^{k}+\langle v'\rangle^{k}+\langle v_*\rangle^{k}-\langle v\rangle^{k}\\
        \le& (\cos^{k}\frac{\th}{2}+\sin^{k}\frac{\th}{2}-1)\langle v\rangle^{k}+C_k(\cos^{k}\frac{\th}{2}+\sin^{k}\frac{\th}{2}+1)\langle v_*\rangle^{k}
        \\
        +&C_k(\langle v_*\rangle^{k-1}\langle v\rangle+\langle v_*\rangle\langle v\rangle^{k-1}).
    \end{aligned}
    \eeq
    From \eqref{3.14} and \eqref{3.15}, it reduces to estimate
    $$\int\limits_{\mathbb{R}^3\times\mathbb{R}^3\times\mathbb{S}^2}B^0|\pa^{\al}f^{0}|f^0_*(A_1+A_2+A_3)\mathrm{d}V,$$
    where 
    $$A_1=(\cos^{k}\frac{\th}{2}+\sin^{k}\frac{\th}{2}-1)\langle v\rangle^{k},\quad A_2=C_k(\cos^{k}\frac{\th}{2}+\sin^{k}\frac{\th}{2}+1)\langle v_*\rangle^{k},$$
    $$A_3=C_k(\langle v_*\rangle^{k-1}\langle v\rangle+\langle v_*\rangle\langle v\rangle^{k-1}).$$
   
  \textbf{Estimate of $A_1$.} First,
    \ben\label{3.16}
    \int B^0|\pa^{\al}f^0|f^0_* A_1 \mathrm{d}V=-C_1(k)\int_{\mathbb{R}^3\times\mathbb{R}^3}|v-v_*||\pa^{\al}f^0|f^0_*\langle v\rangle^{k}\mathrm{d}v\mathrm{d}v_*,
    \een
    where $C_1(k)=\int_{\mathbb{S}^2}\frac{1}{4}(1-\sin^{k}\frac{\th}{2}-\cos^{k}\frac{\th}{2})\mathrm{d}\sigma>0$ depending only on $k$.
 Let
   $\nu(v)=
\int_{\mathbb{R}^3}|v-v_*|f^0(v_*)\mathrm{d}v_*$. On the one hand, for any $K>0$,
\beq\label{3.17}
\begin{aligned}
    \nu(v)&\ge \int_{\mathbb{R}^3,|v-v_*|\ge K}|v-v_*|f^0(v_*)\mathrm{d}v_*\\
    &\ge K\int_{\mathbb{R}^3,|v-v_*|\ge K}f^0(v_*)\mathrm{d}v_*=K(|f^0|_{L^1}-\int_{\mathbb{R}^3,|v-v_*|\le K}f^0(v_*)\mathrm{d}v_*).
\end{aligned}
\eeq
By mass conservation, one has $|f^0|_{L^1}=|f_{\textup{in}}|_{L^1}$. Note that for any $R>1$,
\beq\label{estimate of convolution-1}
\begin{aligned}
\int_{\mathbb{R}^3,|v-v_*|\le K}&f^0(v_*)\mathrm{d}v_*\le \int_{|f^0|\ge R,|v-v_*|\le K}f^0(v_*)\mathrm{d}v_*+\int_{|f^0|\le R,|v-v_*|\le K}f^0(v_*)\mathrm{d}v_*\\
\le & \int_{|f^0|\ge R,|v-v_*|\le K}f^0(v_*)\log f^0(v_*)\mathrm{d}v_* (\log R)^{-1}+ \frac{4}{3}\pi K^3R\\
\le & \int_{|f^0|\ge 1}f^0(v_*)\log f^0(v_*)\mathrm{d}v_* (\log R)^{-1}+ \frac{4}{3}\pi K^3R\\
\le & (H(f^0)-\int_{|f^0|\le 1}f^0(v_*)\log f^0(v_*)\mathrm{d}v_*)(\log R)^{-1}+\frac{4}{3}\pi K^3R.
\end{aligned}
\eeq
By the well-known Boltzmann H-theorem, one has $H(f^0)\le H(f_{\textup{in}})$. Using a basic inequality,
$$x\log y-x\log x\le y,\quad y>0,x\ge 0.$$
One can take $x=f^0(v_*),y=e^{-|v_*|^2}$ to get
$$-\int_{|f^0|\le 1}f^0(v_*)\log f^0(v_*)\mathrm{d}v_*\le \int_{\mathbb{R}^3} f^0(v_*)|v_*|^2\mathrm{d}v_*+\int_{\mathbb{R}^3} e^{-|v_*|^2}\mathrm{d}v_*\le |f_{\textup{in}}|_{L^1_2}+\pi^{3/2}.$$
Therefore, \eqref{estimate of convolution-1} gives
\ben\label{estimate of convolution-2}
\int_{\mathbb{R}^3,|v-v_*|\le K}f^0(v_*)\mathrm{d}v_*\le (\log R)^{-1}(H(f_{\textup{in}})+|f_{\textup{in}}|_{L^1_2}+\pi^{3/2})+\frac{4}{3}\pi K^3R.
\een
From \eqref{3.17} and \eqref{estimate of convolution-2},
\beno
\nu(v)\ge K\Big(|f_{\textup{in}}|_{L^1}-(\log R)^{-1}(H(f_{\textup{in}})+|f_{\textup{in}}|_{L^1_2}+\pi^{3/2})-\frac{4}{3}\pi K^3R\Big).
\eeno
 By the assumption $f_{\textup{in}}\in L^1_{2}\cap L\log L$, one has $H(f_{\textup{in}})+|f_{\textup{in}}|_{L^1_{2}}<\infty$. We can take $R=\exp(\frac{4(H(f_{\textup{in}})+\pi^{3/2}+|f_{\textup{in}}|_{L^1_{2}})}{|f_{\textup{in}}|_{L^1}})$ and $K=(\frac{3}{16\pi}R^{-1}|f_{\textup{in}}|_{L^1})^{1/3}$. Then
\ben\label{3.21}
\nu(v)\ge \frac{K}{2}|f_{\textup{in}}|_{L^1}>0.
\een
On the other hand, it is easy to see
\beq\label{3.22}
\nu(v)\ge |v||f^0|_{L^1}-|f^0|_{L^1_1}\ge |v||f_{\textup{in}}|_{L^1}-|f_{\textup{in}}|_{L^1_2}.
\eeq
    By \eqref{3.21} and \eqref{3.22}, there exists a constant $C_2$ depending on $|f_{\textup{in}}|_{L^1},|f_{\textup{in}}|_{L^1_2},|f_{\textup{in}}|_{L\log L}$, such that 
    \ben\label{3.23}
\nu(v)\ge C_2\langle v \rangle .
    \een
    From \eqref{3.16} and \eqref{3.23},
\ben\label{3.24}
    \int B^0|\pa^{\al}f^0|f^0_*A_1\mathrm{d}V\le -C_1(k)C_2|\pa^{\al}f^0|_{L^1_{k+1}}.
\een

 \textbf{Estimate of $A_2$.} By Young's inequality,
$$
\begin{aligned}
\int B^0|\pa^{\al}f^0|f^0_* A_2\mathrm{d}V&= C_k\int_{\mathbb{S}^2}\frac{1}{4}(\cos^{k}\frac{\th}{2}+\sin^{k}\frac{\th}{2}+1)\mathrm{d}\sigma\int |v-v_*||\pa^{\al}f^0|f^0_*\langle v_*\rangle^{k}\mathrm{d}v\mathrm{d}v_*\\
&\le C_k\int_{\mathbb{R}^3\times\mathbb{R}^3}|\pa^{\al}f^0|f^0_*\langle v\rangle\langle v_*\rangle^{k}\mathrm{d}v\mathrm{d}v_*+
C_k\int_{\mathbb{R}^3\times\mathbb{R}^3}|\pa^{\al}f^0|f^0_*\langle v_*\rangle^{k+1}\mathrm{d}v\mathrm{d}v_*\\
&\le C_k\epsilon |\pa^{\al}f^0|_{L^1_{k+1}}|f^0|_{L^1}+C_k(\epsilon^{-\frac{1}{k}}+1)|\pa^{\al}f^0|_{L^1}|f^0|_{L^1_{k+1}}.
\end{aligned}
$$
Therefore,
\ben\label{3.25}
\int B^0|\pa^{\al}f^0|f^0_* A_2\mathrm{d}V\le C_k\epsilon |\pa^{\al}f^0|_{L^1_{k+1}}|f_{\textup{in}}|_{L^1}+C(k,\epsilon)|\pa^{\al}f^0|_{L^1}|f_{\textup{in}}|_{L^1_{k+1}}.
\een

 \textbf{Estimate of $A_3$.} By Young's inequality,
\beq\label{3.26}
\begin{aligned}
   & \int B^0|\pa^{\al}f^0|f^0_* A_3\mathrm{d}V=C_k\int B^0|\pa^{\al}f^0|f^0_*(\langle v\rangle\langle v_*\rangle^{k-1}+\langle v_*\rangle\langle v\rangle^{k-1})\mathrm{d}V\\
    &\le C_k\Big(\epsilon\int_{\mathbb{R}^3\times\mathbb{R}^3} |v-v_*||\pa^{\al}f^0|f^0_*\langle v\rangle^{k}\mathrm{d}v\mathrm{d}v_*+(\epsilon^{-\frac{1}{k-1}}+\epsilon^{-\frac{k}{k-1}})\int_{\mathbb{R}^3\times\mathbb{R}^3} |v-v_*||\pa^{\al}f^0|f^0_*\langle v_*\rangle^{k}\mathrm{d}v\mathrm{d}v_*\Big)\\
    &\le C_k\epsilon |\pa^{\al}f^0|_{L^1_{k+1}}|f_{\textup{in}}|_{L^1_2}+C_k(\epsilon^{-\frac{1}{k-1}}+\epsilon^{-\frac{k}{k-1}})|\pa^{\al}f^0|_{L^1_2}|f_{\textup{in}}|_{L^1_{k+1}}.
\end{aligned}
\eeq
Combining \eqref{3.24}, \eqref{3.25} and \eqref{3.26}, taking $\epsilon$ small enough, we derive
$$
\int B^0|\pa^{\al}f^0|f^0_*(A_1+A_2+A_3)\mathrm{d}V\le -\frac{1}{2}C_1(k)C_2|\pa^{\al}f^0|_{L^1_{k+1}}+C_3(k,|f_{\textup{in}}|_{L^1_2},|f_{\textup{in}|_{L\log L}})|\pa^{\al}f^0|_{L^1_2}|f_{\textup{in}}|_{L^1_{k+1}}.
$$
By \eqref{3.12},
\ben\label{3.27}
\frac{\mathrm{d}}{\mathrm{d}t}|\pa^{\al}f^0|_{L^1_{k}}\le -\frac{1}{2}C_1(k)C_2|\pa^{\al}f^0|_{L^1_{k+1}}+C_3|\pa^{\al}f_{\textup{in}}|_{L^1_2}e^{C|f_{\textup{in}}|_{L^1_3}t}|f_{\textup{in}}|_{L^1_{k+1}}.
\een
Solving \eqref{3.27},
\ben\label{3.28}
|\pa^{\al}f^0|_{L^1_{k}}\le |\pa^{\al}f_{\textup{in}}|_{L^1_{k}}\exp\{-\frac{1}{2}C_1C_2t\}+\frac{C_3|\pa^{\al}f_{\textup{in}}|_{L^1_{2}}|f_{\textup{in}}|_{L^1_{k+1}}}{C|f_{\textup{in}}|_{L^1_3}+\frac{1}{2}C_1C_2}\Big(\exp\{C|f_{\textup{in}}|_{L^1_3}t\}-\exp\{-\frac{1}{2}C_1C_2t\}\Big).
\een
For any $T>0$, \eqref{3.28} gives a bound of $|f^0|_{W^{1,1}_{k}}$, which depends on $k$, $T$, $|f_{\textup{in}}|_{W^{1,1}_{k}}$ , $|f_{\textup{in}}|_{L^1_{k+1}}$ and $|f_{\textup{in}}|_{L\log L}$.
\end{proof}
Now we are in a position to prove our Theorem \ref{thm1.2}. Without loss of generality, we may assume that $b_s(\th)$ is supported in the set $0\le \th\le\frac{\pi}{2}$, otherwise $b_s(\th)$ can be replaced by its symmetrized form 
$$\bar{b}_s(\th)=\left[b_s(\th)+b_s(\pi-\th)\right] \mathbf{1}_{0\le \th\le\frac{\pi}{2}} .$$
Note that by \eqref{convergence_rate_in_global}, for any $\th\in[0,\frac{\pi}{2}]$,
\ben\label{3.29}
|\bar{b}_s(\th)-\frac{1}{2}|\le |b_s(\th)-\frac{1}{4}|+|b_s(\pi-\th)-\frac{1}{4}|\le Cs\th^{-2-2s}+Cs(\pi-\th)^{-2-2s}\le Cs\th^{-2-2s}.
\een
For simplicity, we still use the notation $b_s(\th)$ to denote $\bar{b}_s(\th)$. As usual, we use $C$ to denote positive constant, which may differ across lines.
We emphasize that all constants appearing in the upper bound are independent of s. For any $0<s<\frac{1}{8}$,
$$\int_{\mathbb{S}^2}\sin^2\frac{\th}{2}b_s(\th)\mathrm{d}\sigma\le\int_{\mathbb{S}^2}\sin\frac{\th}{2}b_s(\th)\mathrm{d}\sigma\le C\int_{0}^{\frac{\pi}{2}}b_s(\th)\th^2\mathrm{d}\th\le C\Big(\int_{0}^{\frac{\pi}{2}}|b_s-\frac{1}{2}|\th^2\mathrm{d}\th+\int_{0}^{\frac{\pi}{2}}\frac{1}{2}\th^2\mathrm{d}\th\Big).$$
By \eqref{3.29},
$$\int_{0}^{\frac{\pi}{2}}|b_s-\frac{1}{2}|\th^2\mathrm{d}\th\le Cs\int_{0}^{\frac{\pi}{2}}\th^{-2s}\mathrm{d}\th=\frac{Cs}{1-2s}\Big(\frac{\pi}{2}\Big)^{1-2s}\le 
\frac{C}{6}\frac{\pi}{2}.$$
Therefore, there exists a constant $C$ independent of $s$, such that
\ben\label{theta-integral}
\int_{\mathbb{S}^2}\sin^2\frac{\th}{2}b_s(\th)\mathrm{d}\sigma+\int_{\mathbb{S}^2}\sin\frac{\th}{2}b_s(\th)\mathrm{d}\sigma\le C,\quad 0<s<\frac{1}{8}.
\een
\begin{proof}[Proof of Theorem \ref{thm1.2}] The error function $F^s=\frac{f^s-f^0}{s}$ satisfies \eqref{error-equation} which we recall below,
\ben\label{error-equation-2}
	\pa_t F^s=Q^s(f^s,F^s)+Q^s(F^s,f^0)+\frac{Q^s-Q^0}{s}(f^0,f^0) \colonequals \sum\limits_{i=1}^{3} A_i.
	\een
Multiplying \eqref{error-equation-2} with $\textup{sgn}(F^s)\langle v\rangle^k$, integrating over $\mathbb{R}^3$,
$$
\frac{\mathrm{d}}{\mathrm{d}t}|F^s|_{L^1_k}=\sum\limits_{i=1}^{3}\langle A_i,\textup{sgn}(F^s)\langle v\rangle^k\rangle.
$$
\beq\label{3.32}
\begin{aligned}
    &\langle A_1,\textup{sgn}(F^s)\langle v\rangle^k\rangle\\
    =& \langle Q^s(f^s,F^s),\textup{sgn}(F^s)\langle v\rangle^k\rangle\\
    =&\int B^s((f^s)'_*(F^s)'-f^s_*F^s)\textup{sgn}(F^s)\langle v\rangle^k\mathrm{d}V\\
    \le &\int B^s((f^s)'_*|F^s|'-f^s_*|F^s|)\langle v\rangle^k\mathrm{d}V\\
    = &\langle Q^s(f^s,|F^s|),\langle v\rangle^k\rangle\\
    =&\langle Q^s(f^0,|F^s|),\langle v\rangle^k\rangle+s\langle Q^s(F^s,|F^s|),\langle v\rangle^k\rangle\\
    \le& \langle Q^s(f^0,|F^s|),\langle v\rangle^k\rangle+s|\langle Q^s(F^s,|F^s|),\langle v\rangle^k\rangle|.
\end{aligned}
\eeq
To make the estimate clear, our proof is divided into four parts. Let
\beq\label{3.33}
\begin{aligned}
I_1=\langle Q^s(f^0,|F^s|),\langle v\rangle^k\rangle,&\quad I_2=\langle Q^s(F^s,f^0),\textup{sgn}(F^s)\langle v\rangle^k\rangle,\\
I_3=s|\langle Q^s(F^s,|F^s|),\langle v\rangle^k\rangle|,&\quad I_4=\langle \frac{Q^s-Q^0}{s}(f^0,f^0),\textup{sgn}(F^s)\langle v\rangle^k\rangle.
\end{aligned}
\eeq
By \eqref{error-equation-2}, \eqref{3.32} and \eqref{3.33},
\ben\label{3.34}
\frac{\mathrm{d}}{\mathrm{d}t}|F^s|_{L^1_2}\le \sum\limits_{i=1}^{4} I_i.
\een

\textbf{Estimate of $I_1$.} We have
$$
\begin{aligned}
    I_1&=\int B^s((f^0)'_*|F^s|'-f^0_*|F^s|)\langle v\rangle^k\mathrm{d}V=\int B^s f^0_*|F^s|(\langle v'\rangle^k-\langle v\rangle^k)\mathrm{d}V\\
    \eqref{3.1}\le &-\int B^s f^0_*|F^s|(1-\cos^{k}\frac{\th}{2})\langle v\rangle^k\mathrm{d}V\\
    &+C_k\int B^s f^0_*|F^s|(\sin^k\frac{\th}{2}\langle v_*\rangle^k+\sin^{k-1}\frac{\th}{2}\langle v_*\rangle^{k-1}\langle v\rangle+\sin\frac{\th}{2}\langle v_*\rangle\langle v\rangle^{k-1})\mathrm{d}V.
\end{aligned}
$$
Therefore,
\beq\label{3.36}
\begin{aligned}
I_1\le&-\int B^s f^0_*|F^s|(1-\cos^{k}\frac{\th}{2})\langle v\rangle^k\mathrm{d}V\\
&+C_k\int_{\mathbb{S}^2}\sin\frac{\th}{2}b_s(\th)\mathrm{d}\sigma\int_{\mathbb{R}^3\times\mathbb{R}^3} |v-v_*|^\gamma f^0_*|F^s|(\langle v_*\rangle^k+\langle v_*\rangle^{k-1}\langle v\rangle+\langle v_*\rangle\langle v\rangle^{k-1})\mathrm{d}v\mathrm{d}v_*\\
&\colonequals I_{11}+I_{12}.
\end{aligned}
\eeq
Note that by \eqref{theta-integral}, there exists a constant $C_k$ independent of $s$,
\ben\label{3.37}
I_{12}\le C_k|F^s|_{L^1_{k-1+\gamma}}|f^0|_{L^1_{k+\gamma}}.
\een
From \eqref{3.36} and \eqref{3.37},
\ben\label{3.38}
I_{1}\le -\int B^s f^0_*|F^s|(1-\cos^{k}\frac{\th}{2})\langle v\rangle^k\mathrm{d}V+C_k|F^s|_{L^1_{k-1+\gamma}}|f_{\textup{in}}|_{L^1_{k+\gamma}}.
\een

\textbf{Estimate of $I_2$.} Note that
\beq\label{3.39}
\begin{aligned}
    I_2&=\langle Q^s(F^s,f^0),\textup{sgn}(F^s)\langle v\rangle^k\rangle=\int B^s F^s_*f^0(\textup{sgn}(F^s)'\langle v'\rangle^k-\textup{sgn}(F^s)\langle v\rangle^k)\mathrm{d}V\\
    &=\int B^s F^s_*f^0 \textup{sgn}(F^s)'(\langle v'\rangle^k-\langle v\rangle^k)\mathrm{d}V+\int B^s F^s_*f^0 (\textup{sgn}(F^s)'-\textup{sgn}(F^s))\langle v\rangle^k\mathrm{d}V\\
    &\colonequals I_{21}+I_{22}.
\end{aligned}
\eeq
Note that
\beq\label{3.40}
\begin{aligned}
I_{21}=&\int B^s F^s_*f^0 \textup{sgn}(F^s)'(\langle v'\rangle^k-\langle v\rangle^k)\mathrm{d}V=\int B^s F^sf^0_* \textup{sgn}(F^s)'_*(\langle v_*'\rangle^k-\langle v_*\rangle^k)\mathrm{d}V\\
\eqref{3.1}\le& \int B^s |F^s|f^0_* \sin^k\frac{\th}{2}\langle v\rangle^k\mathrm{d}V+\int B^s |F^s|f^0_* (1-\cos^k\frac{\th}{2})\langle v_*\rangle^k\mathrm{d}V\\
&+C_k\int  B^s |F^s|f^0_*(\sin^{k-1}\frac{\th}{2}\langle v\rangle^{k-1}\langle v_*\rangle+\sin\frac{\th}{2}\langle v\rangle\langle v_*\rangle^{k-1})\mathrm{d}V
\\
\le & \int B^s |F^s|f^0_* \sin^k\frac{\th}{2}\langle v\rangle^k\mathrm{d}V+C_k|F^s|_{L^1_{k-1+\gamma}}|f_{\textup{in}}|_{L^1_{k+\gamma}}.
\end{aligned}
\eeq
Let 
\ben\label{3.41}
\mathcal{F}(t,v)\colonequals f^0(t,v)\langle v\rangle^k .
\een
By \eqref{3.41}, one has 
\beq\label{3.42}
\begin{aligned}
    I_{22}=&\int B^s F^s_*f^0 (\textup{sgn}(F^s)'-\textup{sgn}(F^s))\langle v\rangle^k\mathrm{d}V\\
    =& \int B^s F^s_* (\textup{sgn}(F^s)'\mathcal{F}'-\textup{sgn}(F^s)\mathcal{F})\mathrm{d}V-\int B^s F^s_* \textup{sgn}(F^s)'(\mathcal{F}'-\mathcal{F})\mathrm{d}V\\
    \colonequals &I_{221}-I_{222}.
\end{aligned}
\eeq
By cancellation lemma and lemma \ref{lem3.1},
\beq\label{3.43}
\begin{aligned}
I_{221}=&\int B^s F^s_* (\textup{sgn}(F^s)'\mathcal{F}'-\textup{sgn}(F^s)\mathcal{F})\mathrm{d}V\\
=&\int \Big(B^s(\frac{|v-v_*|}{\cos\frac{\th}{2}},\th)\frac{1}{\cos^3\frac{\th}{2}}- B^s(|v-v_*|,\th)\Big)F^s_* \textup{sgn}(F^s)\mathcal{F}\mathrm{d}V\\
\le & \int_{\mathbb{S}^2}\left(\cos^{-(3+\gamma)}\frac{\th}{2}-1\right)b_s(\th)\mathrm{d}\sigma\int_{\mathbb{R}^3\times\mathbb{R}^3}|v-v_*|^{\gamma}|F^s_*|f^0\langle v\rangle^k\mathrm{d}v\mathrm{d}v_*\\
\le & C_k|F^s|_{L^1_\gamma}|f_{\textup{in}}|_{L^1_{k+\gamma}}.
\end{aligned}
\eeq
Let $v(\kappa)=\kappa v'+(1-\kappa)v,\kappa\in[0,1]$, by Taylor formula, $\mathcal{F}'-\mathcal{F}=\int_{0}^{1}\grad\mathcal{F}(v(\kappa))\cdot (v'-v)\mathrm{d}\kappa$. Therefore,
$$
\begin{aligned}
    |I_{222}|=&\Big|\int B^s F^s_* \textup{sgn}(F^s)'(\mathcal{F}'-\mathcal{F})\mathrm{d}V\Big|\\
    \le & \int_{0}^{1}\int b_s(\th)\sin\frac{\th}{2}|F^s_*||v-v_*|^{1+\gamma}|\grad\mathcal{F}(\kappa)|\mathrm{d}V\mathrm{d}\kappa\\
    =& \int_{0}^{1}\int b_s(\th)\sin\frac{\th}{2}|F^s_*|\Big(\frac{|v-v_*|}{\psi_\kappa(\th)}\Big)^{1+\gamma}|\grad\mathcal{F}(v)|\frac{1}{\psi^3_\kappa(\th)}\mathrm{d}V\mathrm{d}\kappa.
\end{aligned}
$$
Here, we use the change of variables $v\to v(\kappa)$ and $\psi_\kappa(\th)=(\cos^2\frac{\th}{2}+(1-\kappa)^2\sin^2\frac{\th}{2})^{\frac{1}{2}}\in[\frac{\sqrt{2}}{2},1]$. One can check the details in Lemma 2.2 in \cite{He2024}. Therefore,
$$\Big|\int B^s F^s_* \textup{sgn}(F^s)'(\mathcal{F}'-\mathcal{F})\mathrm{d}V\Big|\le C|F^s|_{L^1_{1+\gamma}}|\grad\mathcal{F}|_{L^1_{1+\gamma}}.$$
Recalling \eqref{3.41}, $\grad\mathcal{F}=\grad f^0\langle v\rangle^k+kf^0\langle v\rangle^{k-2} v$, by lemma \ref{lem3.1},
\beq\label{3.45}
\begin{aligned}
    |I_{222}|\le C|F^s|_{L^1_{1+\gamma}}|\grad\mathcal{F}|_{L^1_{1+\gamma}}\le C_k(|F^s|_{L^1_{1+\gamma}}|\grad f^0|_{L^1_{k+1+\gamma}}+|F^s|_{L^1_{1+\gamma}}|f_{\textup{in}}|_{L^1_{k+\gamma}}).
\end{aligned}
\eeq
By \eqref{3.42}, \eqref{3.43} and \eqref{3.45},
\ben\label{3.46}
I_{22}\le C_k|F^s|_{L^1_{1+\gamma}}(|f_{\textup{in}}|_{L^1_{k+\gamma}}+|\grad f^0|_{L^1_{k+1+\gamma}}).
\een
From \eqref{3.39}, \eqref{3.40} and \eqref{3.46},
\ben\label{3.47}
I_2\le \int B^s |F^s|f^0_* \sin^k\frac{\th}{2}\langle v\rangle^k\mathrm{d}V+C_k|F^s|_{L^1_{k-1+\gamma}}(|f_{\textup{in}}|_{L^1_{k+\gamma}}+|\grad f^0|_{L^1_{k+1+\gamma}}).
\een
By \eqref{3.38} and \eqref{3.47},
$$
I_1+I_2\le -\int B^s f^0_*|F^s|((1-\cos^{k}\frac{\th}{2}-\sin^{k}\frac{\th}{2})\langle v\rangle^k\mathrm{d}V+C_k|F^s|_{L^1_{k-1+\gamma}}(|f_{\textup{in}}|_{L^1_{k+\gamma}}+|\grad f^0|_{L^1_{k+1+\gamma}}).
$$
Let $$
I_1(k)\colonequals\min\limits_{\th\in[0,\frac{\pi}{2}]}\frac{1-\cos^k\frac{\th}{2}-\sin^k\frac{\th}{2}}{ \th^2}\implies 1-\cos^k\frac{\th}{2}-\sin^k\frac{\th}{2}\ge I_1(k)\th^2.
$$
Recalling \eqref{3.21} and \eqref{3.22},
$$\int_{\mathbb{R}^3}|v-v_*|^{\gamma}f^0_*\mathrm{d}v_*\ge |f_{\textup{in}}|_{L^1}|v|^{\gamma}-|f_{\textup{in}}|_{L^1_2}.$$
There exists a constant $C$ independent of $s$ such that
$$\int_{\mathbb{R}^3}|v-v_*|^{\gamma}f^0_*\mathrm{d}v_*\ge C\langle v\rangle^\gamma.$$
There exists a constant $C_1(k)\sim I_1(k)$ independent of $s$ such that
\ben\label{3.48}
I_1+I_2\le -C_1(k)|F^s|_{L^1_{k+\gamma}}+C_k|F^s|_{L^1_{k-1+\gamma}}(|f_{\textup{in}}|_{L^1_{k+\gamma}}+|\grad f^0|_{L^1_{k+1+\gamma}}).
\een

\textbf{Estimate of $I_3$.} Recalling \eqref{3.33},
\beq\label{3.49}
    I_3=s|\langle Q^s(F^s,|F^s|),\langle v\rangle^k\rangle|=s\Bigg|\int B^s F^s_*|F^s|(\langle v'\rangle^k- \langle v\rangle^k)\mathrm{d}V\Bigg|.
\eeq
Using \eqref{3.1} again,
\beq\label{3.50}
\begin{aligned}
    &\Bigg|\int B^s F^s_*|F^s|(\langle v'\rangle^k- \langle v\rangle^k)\mathrm{d}V\Bigg|\\
    \le & \Bigg|\int B^sF^s_*|F^s|((\cos^k\frac{\th}{2}-1)\langle v\rangle^k+\sin^k\frac{\th}{2}\langle v_*\rangle^k+C_k(\sin\frac{\th}{2}\langle v\rangle^{k-1}\langle v_*\rangle+\sin^{k-1}\frac{\th}{2}\langle v\rangle\langle v_*\rangle^{k-1})\mathrm{d}V\Bigg|\\
    \le & C_k(|F^s|_{L^1_{k+\gamma}}|F^s|_{L^1_\gamma}+|F^s|_{L^1_{k-1+\gamma}}|F^s|_{L^1_{1+\gamma}})\\
    \le& C_k|F^s|_{L^1_{k+\gamma}}|F^s|_{L^1_{1+\gamma}}.
\end{aligned}
\eeq
By \eqref{3.49} and \eqref{3.50},
$$
I_3\le C_ks|F^s|_{L^1_{k+\gamma}}|F^s|_{L^1_{1+\gamma}}.
$$
Note that by lemma \ref{lem3.1},
$$s|F^s|_{L^1_{k+\gamma}}=|f^s-f^0|_{L^1_{k+\gamma}}\le C_k|f_{\textup{in}}|_{L^1_{k+\gamma}}.$$
Therefore,
\ben\label{3.51}
I_3\le C_k|f_{\textup{in}}|_{L^1_{k+\gamma}}|F^s|_{L^1_{1+\gamma}}.
\een

\textbf{Estimate of $I_4$.} Note that
$$I_{4}\colonequals\langle A_4,\textup{sgn}(F^s)\langle v \rangle^k\rangle=\langle \frac{Q^s-Q^0}{s}(f^0,f^0),\textup{sgn}(F^s)\langle v \rangle^k\rangle.$$
We decompose $\langle Q^s(f^0,f^0),\textup{sgn}(F^s)\langle v\rangle^k\rangle$  into two parts,
\beq\label{3.52}
\begin{aligned}
&\langle Q^s(f^0,f^0),\textup{sgn}(F^s)\langle v\rangle^k\rangle=\int B^s f^0_*f^0(\textup{sgn}(F^s)'\langle v'\rangle^k-\textup{sgn}(F^s)\langle v\rangle^k)\mathrm{d}V\\
=&\int B^sf^0_*((f^{0})'\textup{sgn}(F^s)'\langle v'\rangle^k-f^0\textup{sgn}(F^s)\langle v\rangle^k)\mathrm{d}V+\int B^sf^0_*\textup{sgn}(F^s)'\langle v'\rangle^k(f^0-(f^0)')\mathrm{d}V\\
\colonequals &I_{41}+I_{42}.
\end{aligned}
\eeq
Similarly,
\beq\label{3.54}
\begin{aligned}
    &\langle Q^0(f^0,f^0),\textup{sgn}(F^s)\langle v\rangle^k\rangle=\int B^0 f^0_*f^0(\textup{sgn}(F^s)'\langle v'\rangle^k-\textup{sgn}(F^s)\langle v\rangle^k\mathrm{d}V\\
    =& \int B^0f^0_*((f^{0})'\textup{sgn}(F^s)'\langle v'\rangle^k-f^0\textup{sgn}(F^s)\langle v\rangle^k)\mathrm{d}V+\int B^0f^0_*\textup{sgn}(F^s)'\langle v'\rangle^k(f^0-(f^0)')\mathrm{d}V\\
    \colonequals &I_{43}+I_{44}.
\end{aligned}
\eeq
Therefore,
\ben\label{3.56}
I_4=\frac{I_{41}-I_{43}}{s}+\frac{I_{42}-I_{44}}{s}.
\een
Thanks to the cancellation lemma,
\beq\label{3.57}
\begin{aligned}
    I_{41}=\int b_s(\th)|v-v_*|^{\gamma}\Big(\frac{1}{\cos^{3+\gamma}\frac{\th}{2}}-1\Big)\textup{sgn}(F^s)f^0_*f^0\langle v\rangle^k\mathrm{d}V.
\end{aligned}
\eeq
\ben\label{3.58}
I_{43}=\int \frac{\mathbf{1}_{0\le\th\le \frac{\pi}{2}}}{2}|v-v_*|\Big(\frac{1}{\cos^{4}\frac{\th}{2}}-1\Big)\textup{sgn}(F^s)f^0_*f^0\langle v\rangle^k\mathrm{d}V.
\een
Note that
\beq\label{3.59}
\begin{aligned}
    b_s(\th)|v-v_*|^{\gamma}\Big(\frac{1}{\cos^{3+\gamma}\frac{\th}{2}}&-1\Big)-\frac{1}{2}|v-v_*|\Big(\frac{1}{\cos^{4}\frac{\th}{2}}-1\Big)
= (b_s(\th)-\frac{1}{2})|v-v_*|^{\gamma}\Big(\frac{1}{\cos^{3+\gamma}\frac{\th}{2}}-1\Big)\\
+&\frac{1}{2}(|v-v_*|^{\gamma}-|v-v_*|)\Big(\frac{1}{\cos^{3+\gamma}\frac{\th}{2}}-1\Big)
+ \frac{1}{2}|v-v_*|\Big(\frac{1}{\cos^{3+\gamma}\frac{\th}{2}}-\frac{1}{\cos^{4}\frac{\th}{2}}\Big).
\end{aligned}
\eeq
Let
\beq\label{3.60}
\begin{aligned}
    D_1=(b_s(\th)-\frac{1}{2})|v-v_*|^{\gamma}&\Big(\frac{1}{\cos^{3+\gamma}\frac{\th}{2}}-1\Big),\quad D_2=\frac{1}{2}(|v-v_*|^{\gamma}-|v-v_*|)\Big(\frac{1}{\cos^{3+\gamma}\frac{\th}{2}}-1\Big),\\
    &D_3=\frac{1}{2}|v-v_*|\Big(\frac{1}{\cos^{3+\gamma}\frac{\th}{2}}-\frac{1}{\cos^{4}\frac{\th}{2}}\Big).
\end{aligned}
\eeq
From \eqref{3.57}, \eqref{3.58} and \eqref{3.59},
\ben\label{3.61}
\frac{I_{41}-I_{43}}{s}=\int \frac{D_1+D_2+D_3}{s}\textup{sgn}(F^s)f^0_*f^0\langle v\rangle^k\mathrm{d}V.
\een
Note that by \eqref{3.29} and \eqref{3.60},
\ben\label{3.62}
|b_s(\th)-\frac{1}{2}|\le Cs\th^{-2-2s},\quad \Big(\frac{1}{\cos^{3+\gamma}\frac{\th}{2}}-1\Big)\le C\th^2\implies \frac{|D_1|}{s}\le C\th^{-2s}|v-v_*|^{\gamma}.
\een
By \eqref{3.62},
\ben\label{3.63}
\Big|\int \frac{D_1}{s}\textup{sgn}(F^s)f^0_*f^0\langle v \rangle^k\mathrm{d}V\Big|\le C|f^0|_{L^1_{k+\gamma}}|f^0|_{L^1_{\gamma}}\le C|f_{\textup{in}}|_{L^1_{k+\gamma}}|f_{\textup{in}}|_{L^1_{2}}.
\een
Thanks to the basic inequality \eqref{inequality},
$$\begin{cases}
    1-x^\al\le \al\ln x^{-1}\quad \al>0,x\in(0,1),\\
    x^\al-1\le \al x^\al\ln x \quad \al>0,x\in(1,\infty).
\end{cases}$$
Taking $\al=4s,x=|v-v_*|$ and recalling \eqref{1.3} $\gamma=1-4s$, one has for any $0<\d<1$,
\beq\label{3.64}
\begin{cases}
    |v-v_*|^{\gamma}-|v-v_*|=|v-v_*|^{\gamma}(1-|v-v_*|^{4s})\le |v-v_*|^{\gamma}4s\ln|v-v_*|^{-1}, \quad 0\le|v-v_*|<1,\\
    \Big||v-v_*|^{\gamma}-|v-v_*|\Big|\le |v-v_*|^{\gamma}(|v-v_*|^{4s}-1)\le Cs|v-v_*|\ln |v-v_*|\le C_{\delta}s|v-v_*|^{1+\delta}, \quad |v-v_*|>1.
\end{cases}
\eeq
Since $0<s<\frac{1}{8}\implies \gamma=1-4s>\frac{1}{2}$. We have for any $|v-v_*|\le1$,
$$|v-v_*|^{\gamma}4s\ln|v-v_*|^{-1}\le |v-v_*|^{\frac{1}{2}}4s\ln|v-v_*|^{-1}\le4s\max\limits_{0\le x\le 1}x^{\frac{1}{2}}\log x^{-1}\le Cs.$$ Here constant $C$ is independent of $s$.
By \eqref{3.64}, there exists an constant $C_\delta$,
\ben\label{3.65}
\Big||v-v_*|^{\gamma}-|v-v_*|\Big|\le C_{\delta}s\langle v-v_*\rangle^{1+\delta}.
\een
From \eqref{3.65} and \eqref{3.60}, $|\frac{D_2}{s}|\le C_{\delta}\langle v-v_*\rangle^{1+\delta}$. Therefore,
\beq\label{3.66}
\begin{aligned}
\Big|\int \frac{D_2}{s}\textup{sgn}(F^s)f^0_*f^0\langle v\rangle^k \mathrm{d}V\Big|\le  C_{\delta}\int_{\mathbb{R}^3\times\mathbb{R}^3}\langle v-v_*\rangle^{1+\delta}f^0_*f^0\langle v\rangle^k\mathrm{d}V\le C_{\delta}|f_{\textup{in}}|_{L^1_{1+k+\delta}}|f_{\textup{in}}|_{L^1_{2}}.
\end{aligned}
\eeq
By \eqref{inequality} and \eqref{3.60},
\ben\label{3.67}
|D_3|=\frac{1}{2}|v-v_*|\cos^{-4}\frac{\th}{2}(1-\cos^{4s}\frac{\th}{2})\le C|v-v_*|s\implies \frac{|D_3|}{s}\le C|v-v_*|.
\een
By \eqref{3.67},
\ben\label{3.68}
\Big|\int \frac{D_3}{s}\textup{sgn}(F^s)f^0_*f^0\langle v \rangle^k\mathrm{d}V\Big|\le C\int_{\mathbb{R}^3\times\mathbb{R}^3} |v-v_*|f^0_*f^0\langle v \rangle^k\mathrm{d}V\le C|f_{\textup{in}}|_{L^1_{k+1}}|f_{\textup{in}}|_{L^1_2}.
\een
Combining \eqref{3.61}, \eqref{3.63}, \eqref{3.66} and \eqref{3.68},
\ben\label{3.69}
\Big|\frac{I_{41}-I_{43}}{s}\Big|\le C_{\delta}|f_{\textup{in}}|_{L^1_{k+1+\delta}}|f_{\textup{in}}|_{L^1_{2}}.
\een
Recalling \eqref{3.52} and \eqref{3.54},
\ben\label{3.70}
\frac{I_{42}-I_{44}}{s}=\int \frac{B^s-B^0}{s}f^0_*\textup{sgn}(F^s)'\langle v'\rangle^k(f^0-(f^0)')\mathrm{d}V.
\een
Since
\ben\label{3.71}
B^s-B^0=(b_s(\th)-\frac{1}{2})|v-v_*|+b_s(\th)(|v-v_*|^{\gamma}-|v-v_*|).
\een
Define 
\ben\label{3.72}
E_1(|v-v_*|,\th)=(b_s(\th)-\frac{1}{2})|v-v_*|,\quad E_2(|v-v_*|,\th)=b_s(\th)(|v-v_*|^{\gamma}-|v-v_*|).
\een
From \eqref{3.70}, \eqref{3.71} and \eqref{3.72},
\ben\label{3.73}
\frac{I_{42}-I_{44}}{s}=\int \frac{E_1+E_2}{s}f^0_*\textup{sgn}(F^s)'\langle v'\rangle^k(f^0-(f^0)')\mathrm{d}V.
\een
Using the Taylor formula,
\beq\label{3.74}
\begin{aligned}
    &\Big|\int\frac{E_1}{s}f^0_*\textup{sgn}(F^s)'\langle v'\rangle^k(f^0-(f^0)')\mathrm{d}V\Big|= \Big|\int\frac{E_1}{s}f^0_*\textup{sgn}(F^s)'\langle v'\rangle^k\int_0^1\grad f^0(v(\kappa))\cdot(v'-v)\mathrm{d}\kappa\mathrm{d}V\Big|\\
    = & \Big|\int_0^1\int_{\mathbb{R}^3\times\mathbb{R}^3\times\mathbb{S}^2}\frac{E_1}{s}f^0_*\textup{sgn}(F^s)'\langle v'\rangle^k\grad f^0(v(\kappa))\cdot(v'-v)\mathrm{d}V\mathrm{d}\kappa \Big|\\
    \le &  \int_0^1\int_{\mathbb{R}^3\times\mathbb{R}^3\times\mathbb{S}^2}\frac{|E_1|}{s}f^0_*\langle v'\rangle^k|\grad f^0(v(\kappa))||v'-v|\mathrm{d}V\mathrm{d}\kappa .
\end{aligned}
\eeq
Note that
\ben\label{3.75}
|v'-v_*|=\cos\frac{\th}{2}|v-v_*|\le |v-v_*|,\quad |v'-v|=\sin\frac{\th}{2}|v-v_*|.
\een
From \eqref{3.75}, 
\beq\label{3.76}
    \langle v'\rangle\le \sqrt{2}(\langle v'-v_*\rangle+\langle v_*\rangle)\le \sqrt{2}(\langle v-v_*\rangle+\langle v_*\rangle).
\eeq
By \eqref{3.72}, \eqref{3.74}, \eqref{3.75} and \eqref{3.76},
\beq\label{3.77}
\begin{aligned}
    &\Big|\int\frac{E_1}{s}f^0_*\textup{sgn}(F^s)'\langle v'\rangle^k(f^0-(f^0)')\mathrm{d}V\Big|\\
    \le &\int_0^1\int_{\mathbb{R}^3\times\mathbb{R}^3\times\mathbb{S}^2}\frac{\Big|b_s(\th)-\frac{1}{2}\Big|\sin\frac{\th}{2}}{s}f^0_*\langle v'\rangle^k|\grad f^0(v(\kappa))||v-v_*|^2\mathrm{d}V\mathrm{d}\kappa
    \\
    \le & C_k\int_0^1\int_{\mathbb{R}^3\times\mathbb{R}^3\times\mathbb{S}^2}\frac{\Big|b_s(\th)-\frac{1}{2}\Big|\sin\frac{\th}{2}}{s}f^0_*|\grad f^0(v(\kappa))|(\langle v_*\rangle^k+\langle v-v_*\rangle^k)\langle v-v_*\rangle^2\mathrm{d}V\mathrm{d}\kappa\\
    \le &C_k\int_0^1\int_{\mathbb{R}^3\times\mathbb{R}^3\times\mathbb{S}^2}\frac{\Big|b_s(\th)-\frac{1}{2}\Big|\sin\frac{\th}{2}}{s}f^0_*|\grad f^0(v)|\Big(\langle v_*\rangle^k+\langle \frac{v-v_*}{\psi_\kappa(\th)}\rangle^k \Big)\langle \frac{v-v_*}{\psi_\kappa(\th)}\rangle^2\frac{1}{\psi_{\kappa}(\th)^3}\mathrm{d}V\mathrm{d}\kappa\\
    \le &C_k \int_{0}^{\frac{\pi}{2}}\frac{\Big|b_s(\th)-\frac{1}{2}\Big|\sin\frac{\th}{2}}{s}\mathrm{d}\sigma |f_{\textup{in}}|_{L^1_{k+2}}|\grad f^0|_{L^1_{k+2}}\\
    \eqref{3.29}\le &C_k|f_{\textup{in}}|_{L^1_{k+2}}|\grad f^0|_{L^1_{k+2}}.
\end{aligned}
\eeq
Here, we use the change of variable $v\to v(\kappa)$ from the third to the fourth line.

It remains to estimate $E_2$.
Recalling \eqref{3.72},
$$E_2(|v-v_*|,\th)=b_s(\th)(|v-v_*|^\gamma-|v-v_*|).$$
By \eqref{3.65},
\ben\label{3.78}
\Big|\frac{E_2}{s}\Big|\le C_\delta b_s(\th)\langle v-v_*\rangle^{1+\delta}.
\een
Therefore,
$$
\begin{aligned}
    &\Big|\int\frac{E_2}{s}f^0_*\textup{sgn}(F^s)'\langle v'\rangle^k(f^0-(f^0)')\mathrm{d}V\Big|\\
    \le &
    \int_0^1\int_{\mathbb{R}^3\times\mathbb{R}^3\times\mathbb{S}^2}\frac{|E_2|\sin\frac{\th}{2}}{s}f^0_*\langle v'\rangle^k|\grad f^0(v(\kappa))||v-v_*|\mathrm{d}V\mathrm{d}\kappa\\
    \eqref{3.78}\le & 
    C_\delta\int_0^1\int_{\mathbb{R}^3\times\mathbb{R}^3\times\mathbb{S}^2}b_s(\th)\sin\frac{\th}{2}f^0_*\langle v'\rangle^k|\grad f^0(v(\kappa))|\langle v-v_*\rangle^{2+\delta}\mathrm{d}V\mathrm{d}\kappa\\
    \eqref{3.76}\le &
    C_\delta\int_0^1\int_{\mathbb{R}^3\times\mathbb{R}^3\times\mathbb{S}^2}b_s(\th)\sin\frac{\th}{2}f^0_*(\langle v_*\rangle^k+\langle v-v_*\rangle^k)|\grad f^0(v(\kappa))|\langle v-v_*\rangle^{2+\delta}\mathrm{d}V\mathrm{d}\kappa.
\end{aligned}
$$
Taking change of variable $v\to v(\kappa)$, one has
\beq\label{3.80}
\begin{aligned}
    &\Big|\int\frac{E_2}{s}f^0_*\textup{sgn}(F^s)'\langle v'\rangle^k(f^0-(f^0)')\mathrm{d}V\Big|\\
     \le & C_\delta\int_0^1\int_{\mathbb{R}^3\times\mathbb{R}^3\times\mathbb{S}^2}b_s(\th)\sin\frac{\th}{2}f^0_*\Big(\langle v_*\rangle^k+\langle \frac{v-v_*}{\psi_\kappa(\th)}\rangle^k\Big)|\grad f^0(v)|\langle \frac{v-v_*}{\psi_\kappa(\th)}\rangle^{2+\delta}\frac{1}{\psi^3_\kappa(\th)}\mathrm{d}V\mathrm{d}\kappa\\
    \le &
    C(\delta,k)|f_{\textup{in}}|_{L^1_{k+2+\delta}}|\grad f^0|_{L^1_{k+2+\delta}}.
\end{aligned}
\eeq
Combining \eqref{3.73}, \eqref{3.77} and \eqref{3.80},
\ben\label{3.81}
\Big|\frac{I_{42}-I_{44}}{s}\Big|\le C(\delta,k)|f_{\textup{in}}|_{L^1_{k+2+\delta}}|\grad f^0|_{L^1_{k+2+\delta}}.
\een
From \eqref{3.56}, \eqref{3.69} and \eqref{3.81},
\ben\label{3.82}
|I_4|\le C(\delta,k)(|f_{\textup{in}}|_{L^1_{k+1+\delta}}|f_{\textup{in}}|_{L^1_{2}}+|f_{\textup{in}}|_{L^1_{k+2+\delta}}|\grad f^0|_{L^1_{k+2+\delta}}).
\een

Combining \eqref{3.34}, \eqref{3.48}, \eqref{3.51} and \eqref{3.82}, for $0\le t\le T$,
$$
\begin{aligned}
\frac{\mathrm{d}}{\mathrm{d}t}|F^s|_{L^1_k}\le &C_k|F^s|_{L^1_{k-1+\gamma}}(|f_{\textup{in}}|_{L^1_{k+\gamma}}+|\grad f^0|_{L^1_{k+1+\gamma}})\\
&+C(\delta,k)(|f_{\textup{in}}|_{L^1_2}|f_{\textup{in}}|_{L^1_{k+1+\delta}}+|f_{\textup{in}}|_{L^1_{k+2+\delta}}|\grad f^0|_{L^1_{k+2+\delta}})\\
\le &
C_k(|f_{\textup{in}}|_{L^1_{k+1}}+|\grad f^0|_{L^{\infty}([0,T];L^1_{k+2})})|F^s|_{L^1_k}\\
&+C(\delta,k)(|f_{\textup{in}}|_{L^1_2}|f_{\textup{in}}|_{L^1_{k+1+\delta}}+|f_{\textup{in}}|_{L^1_{k+2+\delta}}|\grad f^0|_{L^\infty([0,T];L^1_{k+2+\delta})}).
\end{aligned}
$$
Using Gronwall's inequality, for any $0<t<T$,
\beq\label{3.84}
\begin{aligned}
|F^s(t)|_{L^1_k}&\le \frac{C(\delta,k)(|f_{\textup{in}}|_{L^1_2}|f_{\textup{in}}|_{L^1_{k+1+\delta}}+|f_{\textup{in}}|_{L^1_{k+2+\delta}}|\grad f^0|_{L^\infty([0,T];L^1_{k+2+\delta})})}{C_k(|f_{\textup{in}}|_{L^1_{k+1}}+|\grad f^0|_{L^{\infty}([0,T];L^1_{k+2})})}\times\\
&\Big(\exp\{(|f_{\textup{in}}|_{L^1_{k+1}}+|\grad f^0|_{L^{\infty}([0,T];L^1_{k+2})})t\}-1\Big).
\end{aligned}
\eeq
From \eqref{3.84}, we have $|F^s(0)|_{L^1_k}=0$, which corresponds to $f^s(0,v)=f^0(0,v)=f_{\textup{in}}(v)$.
By Lemma \ref{lemma3.3}, there exists a constant $C$ depending on $k,\delta,|f_{\textup{in}}|_{L^1_{k+3+\delta}},|f_{\textup{in}}|_{W^{1,1}_{k+2+\delta}},|f_{\textup{in}}|_{L\log L},T$, such that \eqref{error-result} holds true.
\end{proof}


\section*{Acknowledgments} 
The research was supported by the National Key Research and
Development Program of China under the grant  2023YFA1010300. 
Jin Woo Jang was supported by the National Research Foundation of Korea (NRF) under the grants RS-2023-00210484, RS-2023-00219980, and 2021R1A6A1A10042944. 
Yu-Long Zhou was also supported by NSF of China under the grants 12522112, 12471217,  Guangdong Basic and Applied Basic Research Foundation under the grant 2024A1515010495, Guangzhou Basic and Applied Basic Research Foundation under the grant 2025A04J7091. Zheng-An Yao was supported by the Research and Development Project of Pazhou Lab(Huangpu), Grant Number 2023K0601.

\bibliographystyle{plain}
\bibliography{references}
\end{document}